\DeclareMathAlphabet{\mathpzc}{OT1}{pzc}{m}{it}
\newtheorem{theorem}{Theorem}[section]
\newtheorem{lemma}[theorem]{Lemma}
\newtheorem{proposition}[theorem]{Proposition}
\newtheorem{corollary}[theorem]{Corollary}
\newtheorem{dfn}[theorem]{Definition}
\newtheorem{Def}[theorem]{Definition}
\newenvironment{definition}[1][Definition]{\begin{trivlist}
\item[\hskip \labelsep {\bfseries #1}]}{\end{trivlist}}
\newenvironment{remark}[1][Remark]{\begin{trivlist}
\item[\hskip \labelsep {\bfseries #1}]}{\end{trivlist}}
\DeclareMathAlphabet{\mathpzc}{OT1}{pzc}{m}{it}
\newtheorem{thm}[theorem]{Theorem}
\newtheorem{Rmk}[theorem]{Remark}
\newtheorem{Prop}[theorem]{Proposition}
\newtheorem{prop}[theorem]{Proposition}
\newtheorem{Nota}[theorem]{Notation}
\newtheorem{lem}[theorem]{Lemma}
\numberwithin{equation}{section}
\numberwithin{equation}{section}
\newcommand{\be}{begin{equation}}
\newcommand{\e}{{\epsilon}}
\newcommand{\z}{\mathbb{Z}}
\newcommand{\N}{\mathbb{N}}
\renewcommand{\c}{\mathbb{C}}
\newcommand{\br}{\mathbb{R}}
\newcommand{{\grinv}}{{\Cal G}^{-r}}
\newcommand{\Cal}{\mathcal}
\renewcommand{\P}{\mathcal P}
\newcommand{\bp}{\begin{pmatrix}}
\newcommand{\ep}{\end{pmatrix}}
\renewcommand{\bp}{{\rm bp}}
\renewcommand{\L}{\Cal L}
\newcommand{\PSL}{\op{PSL}}
\newcommand{\op}{\operatorname}
\renewcommand{\setminus}{-}
\renewcommand{\be}{\begin{equation}}
\newcommand{\ee}{\end{equation}}
\newcommand{\cyl}{\mathsf{C}}
\newcommand{\hx}{\hat{x}}
\newcommand{\ltx}{\lambda_\theta(\hx)}
\title[Prime number theorems for hyperbolic rational functions]{Prime number theorems and holonomies for hyperbolic rational maps}
\address{Mathematics department, Yale university, New Haven, CT 06511 and Korea Institute for Advanced Study, Seoul, Korea}
\email{hee.oh@yale.edu}
\author{Hee Oh  and Dale Winter}
\thanks{Supported in parts by the NSF}
\address{IAS, Princeton}
\email{dale.alan.winter@gmail.edu}
\begin{document} 
\maketitle
 %\tableofcontents

\begin{abstract} We discuss analogues of the prime number theorem for a hyperbolic rational map $f$ of degree at least two on the Riemann sphere. More precisely, we provide counting estimates for the number of primitive periodic orbits of $f$ ordered by their multiplier, and also obtain equidistribution of the associated holonomies; both estimates have power savings error terms. Our counting and equidistribution results will follow from a study of dynamical zeta functions that have been twisted by characters of $S^1$. We will show that these zeta functions are non-vanishing on a half plane $\Re(s) > \delta - \epsilon$, where $\delta$ is the Hausdorff dimension of the Julia set of $f$. 

% we consider dynamical zeta functions that have been twisted by a character of $S^1$. These twisted zeta functions are studied by means of the thermodynamic formalism, and are shown to have a zero free half plane $\Re(z) > \delta - \epsilon$, where $\delta$ is the Hausdorff dimension of the Julia set. 
 \end{abstract}

\section{introduction}
The prime number theorem states that the number of primes of size at most $t$ grows like $\frac{t}{\log t}$. It was proved by Hadamard and de la Valle\'e Poussin in 1896,
and is equivalent to the non-vanishing of the Riemann zeta function $\zeta(s)$ on the line
$\Re(s)=1$.

Geometric analogues of this profound fact have been of great interest over the years, and our aim in this work is to understand a refined version of the theorem for hyperbolic rational maps, providing both a power savings error term and equidistribution of holonomies. These counting questions are studied through their associated dynamical zeta functions, which in our case must be twisted by characters of $S^1$ to encode the holonomy information. We establish that these zeta functions are non-vanishing (except for a necessary pole at $s = \delta$) on a half plane $\Re(s)>\delta-\e$ for some $\e>0$, where $\delta$ is the
Hausdorff dimension of the Julia set of the rational map.

More precisely let $f : \hat{\mathbb{C}} \rightarrow \hat{\mathbb{C}}$ be a rational map on the Riemann sphere and assume that $f$ is hyperbolic, that is, the post critical  orbit  closure of
$f$ is disjoint from the Julia set $J$. We seek to understand the dynamics of iterates of $f$ acting on $\hat{\mathbb{C}}$.

For a primitive periodic orbit $\hat x = \lbrace x, fx, f^2x , \ldots , f^{n-1}x \rbrace$ of period $n$, 
the multiplier is given by 
\[ \lambda(\hat x) :=  (f^n)'(x)% =\prod_{j=1}^{n-1}f'(f^j(x))
\]
and the holonomy is given by
\[ \lambda_{\theta }(\hat x) := \frac{\lambda(\hat x)}{|\lambda(\hat x)|} \in S^1.\]
Primitive periodic orbits naturally play the role of primes in our prime number theorem, while the multiplier will be a measure of the 
size of the periodic orbit (or ``prime''). 

%To state a prime number theorem in this context we need analogues of ``primes'' and of their sizes for the dynamical system $(\hat{\c}, T)$. These roles will be played by primitive periodic points of $T$ and 

%Let $T$ be a rational map
%of the Riemann sphere $\hat{\c}$ of degree at least two. We assume that $T$ is hyperbolic, that is, 
%$T$ is eventually expanding on its Julia set $J$.

%The aim of this paper is to obtain an analogue of the prime number theorem and equidistribution of holonomies
%for periodic orbits of $T$ under iterations, with power savings error terms.

%Periodic orbits are analogues of closed  geodesics of hyperbolic manifolds, and multipliers of periodic orbits are natural
%analogues of lengths of closed geodesics. 
%For a periodic orbit $\hat x = \lbrace x, Tx, T^2x , \ldots , T^{n-1}x \rbrace$ of period $n$, 
%the multiplier is given by 
%\[ \lambda(\hat x) :=  (T^n)'(x) =\prod_{j=1}^{n-1}T'(T^j(x))\]
%and the holonomy is given by
%\[ \lambda_{\theta }(\hat x) := \frac{\lambda(\hat x)}{|\lambda(\hat x)|} \in S^1.\]

 We denote by $\P$ the set of all primitive periodic orbits of $f$.  The hyperbolicity assumption  implies that for a given $t>1$, there are only finitely many
primitive periodic orbits of multiplier bounded by $t$, i.e, 
$$N_t(\P):=\#\{\hat x\in \P: |\lambda(\hat x)|  <t\}<\infty.$$
We will prove the following estimate which both provides asymptotics for $N_t(\P)$ and establishes equidistribution of their holonomies. 
As usual, we write $\op{Li}(t)=\int_2^t \frac{ds}{\log s} $.
%We note that modifying $T$ by conjugating it with an automorphism of $\hat\c$ (that is, by a M\"obius transformation) does not change the dynamics, the  periodic orbit  counts or the holonomies. 

%Our main result provides effective asymptotics for $N_t(\P)$ and equidistribution for the associated holonomies,
%both with power savings error terms.

\begin{theorem} \label{main1} Let $f : \hat{\mathbb{C}} \rightarrow \hat{\mathbb{C}}$ be a hyperbolic rational map of degree at least $2$. 
\begin{enumerate}
\item Suppose $f$ is not conjugate to a monomial $f(x)  = x^{\pm d}$ (by a M\"obius transformation) for any $d \in \mathbb{N}$. Then there exists $\epsilon > 0$ such that 
\begin{equation*}\# \{\hx\in \P: | \lambda(\hx) | <t\}  =  \op{ Li}(t^{\delta}) +O(t^{\delta -\e})\end{equation*}
where $0<\delta<2$ is the Hausdorff dimension of the Julia set of $f$.
%and $\op{li}$ denotes the log integral function $\op{Li}(\alpha)=\int_2^{\alpha} \frac{dt}{\log t}$.
\item Suppose that the Julia set of $f$ is not contained in any circle in $\hat \c$.  Then there exists $\epsilon > 0$ such that for any  $\psi\in C^4(S^1)$,
\begin{equation*} \sum_{\hx\in \P: |\lambda(\hx) | <t} \psi (\ltx) =  \int_{0}^{1} \psi (e^{ 2\pi i \theta})\; d\theta \cdot \op{Li}(t^{\delta}) +O(t^{\delta -\e})\end{equation*}
where the implied constant depends only on the Sobolev norm of $\psi$. \color{black}
\end{enumerate}
\end{theorem}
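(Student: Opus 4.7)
Both conclusions will come from analytic properties of the family of twisted dynamical zeta functions
\[ \zeta_k(s) = \prod_{\hx \in \P} \bigl(1 - \ltx^k \, |\lhx|^{-s}\bigr)^{-1}, \qquad k \in \Z, \]
which converge absolutely on $\Re(s) > \delta$. Part (1) concerns only $\zeta_0$, while for part (2) one expands $\psi(e^{2\pi i \theta}) = \sum_k \hat\psi(k)\, e^{2\pi i k\theta}$, reducing the weighted sum over holonomies to uniform control of every $\zeta_k$. Each $\zeta_k$ is analyzed through the twisted Ruelle transfer operator
\[ \mathcal{L}_{s,k} h(z) = \sum_{f(w)=z} \frac{(f'(w)/|f'(w)|)^k}{|f'(w)|^s}\, h(w) \]
acting on a suitable H\"older or $C^1$ function space on $J$. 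By the Bowen formula and the Ruelle--Perron--Frobenius theorem, $\mathcal{L}_{\delta,0}$ has a simple leading eigenvalue $1$, which produces the simple pole of $\zeta_0$ at $s=\delta$ and ultimately the main term $\op{Li}(t^\delta)$.

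The heart of the argument is to show that each $\zeta_k(s)$ extends meromorphically to a half-plane $\Re(s) > \delta - \e$ with no zeros or poles there apart from the simple pole of $\zeta_0$ at $s=\delta$, together with polynomial growth bounds in $|\Im(s)| + |k|$. This reduces to a uniform spectral gap for $\mathcal{L}_{s,k}$: when $\Re(s)$ is near $\delta$ and $|\Im(s)| + |k|$ is large, the spectral radius must be strictly less than $1$, with a quantitative contraction rate of the form $1 - c(|\Im(s)| + |k|)^{-\eta}$. This is a Dolgopyat-type estimate for the twisted transfer operator, and the non-arithmeticity hypotheses are precisely what is needed to run Dolgopyat's cancellation argument. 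In (1), the condition that $f$ is not conjugate to $x^{\pm d}$ ensures that $\log|f'|$ is not cohomologous to a function taking values in a discrete subgroup of $\R$, ruling out resonances of $\zeta_0$ on the critical line; in (2), the Julia set not lying on a circle prevents the combined cocycle $(\log|f'|,\arg f')$ from being cohomologous into a proper closed subgroup of $\R \times S^1$, giving cancellation in the $k$-direction as well.

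Granted the uniform zeta estimates, both parts follow by standard contour methods. For (1), apply a truncated Perron formula to the logarithmic derivative $-\zeta_0'/\zeta_0$, shift the line of integration from $\Re(s) = \delta + \eta$ down to $\Re(s) = \delta - \e'$, collect the single residue at $s = \delta$ to obtain $\op{Li}(t^\delta)$, and bound the shifted integral using the polynomial growth to produce the $O(t^{\delta-\e})$ error. For (2) the same procedure applied to $-\zeta_k'/\zeta_k$ yields
\[ \sum_{\hx \in \P,\ |\lhx| < t} \ltx^k = \mathbf{1}_{k=0}\,\op{Li}(t^\delta) + O\bigl(t^{\delta-\e'}(1+|k|)^A\bigr) \]
for some fixed exponent $A$; multiplying by $\hat\psi(k)$ and summing over $k$, the Fourier decay $|\hat\psi(k)| = O(|k|^{-4})$ coming from $\psi \in C^4(S^1)$ makes the $k \ne 0$ tail converge absolutely and contribute $O(t^{\delta-\e})$, with an implied constant governed by the Sobolev norm of $\psi$.

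The principal obstacle is the Dolgopyat spectral gap for the operators $\mathcal{L}_{s,k}$, uniform in \emph{both} $\Im(s)$ and $k$. One must construct a smooth Dolgopyat-type operator that exploits oscillation simultaneously in the temporal direction (driven by $\Im(s)$) and in the holonomy direction (driven by $k$), which requires a non-integrability condition on the pair $(\log|f'|, \arg f')$ rather than on $\log|f'|$ alone. Implementing this in the non-Markovian, complex-analytic setting of a hyperbolic rational map --- via a good symbolic coding of $J$, sharp distortion estimates for iterates of $f$, and a quantitative verification that the combined cocycle genuinely spreads out in $\R \times S^1$ --- is where the real work lies, and is what each of the two non-arithmeticity hypotheses ultimately makes possible.
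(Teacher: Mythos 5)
Your proposal follows the same strategy as the paper: twist the dynamical zeta function by the characters $\chi_k(\alpha)=\alpha^k$ of $S^1$, study the associated twisted Ruelle transfer operators $\mathcal{L}_{s,k}$ on $C^1$ of a neighbourhood of $J$, prove a Dolgopyat-type spectral bound uniform in both $\Im(s)$ and $k$, transfer that to a zero-free half-plane and polynomial growth bound for $\zeta_k$, and conclude via the Perron formula for part (1) and the Fourier expansion of $\psi$ with the $O(|k|^{-4})$ decay for part (2). The one place where you are slightly loose is in bundling the two non-arithmeticity inputs into a single "combined cocycle spreads out" statement: the paper separates them into a Non-Local-Integrability condition on the pair $(\log|f'|,\arg f')$ (from $f$ not conjugate to $x^{\pm d}$, via Zdunik's rigidity) and a Non-Concentration Property for the fractal set $J$ itself (from $J$ not lying on a circle, via Eremenko--Van Strien), the latter being needed because the oscillation must be detected by points of $J$, not just of the ambient neighbourhood; in the case $J\subset\R$ only a weaker real-line NCP holds, which is why part (1) survives under the weaker hypothesis while part (2) genuinely requires $J$ off a circle.
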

%\begin{Rmk}\rm
%\begin{enumerate} \rm
%\item 
%Quadratic maps have been of particular interest in holomorphic dynamics. In particular, 
The special case of $f(x)  = x^2 + c$  with $c \in (-\infty, -2)$ was studied by Naud \cite{Na}, who proved Theorem \ref{main1}(1) in that setting. By conjugation, Naud's work covers all quadratic maps with Julia set contained in a circle. 

%\item The main terms of Theorem \ref{main1}(1) and Theorem \ref{main1}(2) were described by Parry and Pollicott \cite{PP} under appropriate weak-mixing hypotheses. 
%In the course of our argument we will see that these hypotheses are fulfilled exactly under the conditions of our theorem.

%\item The main focus of our work  lies on the power-savings error terms, for which we develop Dolgopyat's seminal argument in the setting of rational maps with holonomy parameters. 
%\end{enumerate}
%\end{Rmk}

 We reduce the study of periodic orbits to the study of appropriate zeta functions.  If $x$ is a periodic point of period $n$,
the multiplier $\lambda(x)$ and holonomy $\lambda_\theta(x)$ of $x$ are defined to be the multiplier and holonomy of the orbit $\hat x=\{x, f(x), \cdots, f^{n-1}(x)\}$ respectively.
For each character $\chi$ of $S^1$, define the weighted dynamical zeta function twisted by $\chi$:
\be\label{zetachi} \zeta(s, \chi)=\prod_{\hx\in \mathcal P} (1-\chi(\lambda_\theta(\hat x)) |\lambda(\hx)|^{-s})^{-1} \ee
which is  holomorphic and non-vanishing on $\Re(s) > \delta$. Note that this reduces to the usual dynamical zeta function 
\be\label{zeta} \zeta(s)=\prod_{\hx\in \mathcal P} (1-|\lambda(\hx)|^{-s})^{-1}\ee
when the character is trivial. The point is to establish a zero free half plane beyond the critical line $\Re(s) = \delta$ as follows. 

\begin{theorem}\label{sum} Let $f$ be a hyperbolic rational map of degree at least $2$.
 \begin{enumerate} 
 \item If $f$ is not conjugate to $x^{\pm d}$ for any $d\in \N$, then there exists $\e>0$ such that  $\zeta(s)$ is analytic and
  non-vanishing on $\Re(s)\ge \delta -\e$ except for the simple pole at $s=\delta$.
\item If  the Julia set of  $f$ is not contained in a circle, then there exists $\epsilon > 0$ such that
for any  non-trivial character $\chi$ of $S^1$, $\zeta(s,\chi )$ is analytic and non-vanishing on $\Re(s)\ge \delta -\e$.
\end{enumerate}
\end{theorem}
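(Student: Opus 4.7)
The plan is to encode $\zeta(s,\chi)$ through a twisted transfer operator on a symbolic model of the Julia dynamics and to establish the zero-free strip by a Dolgopyat-type $L^2$-contraction estimate. Hyperbolicity of $f$ provides a finite Markov partition of $J$, so that $f|_J$ is an expanding map modeled by a subshift of finite type. On a Banach space $\mathcal{B}$ adapted to the holomorphic structure of $f$ (so the operator is nuclear), define the twisted transfer operator
$$
\mathcal{L}_{s,\chi} h(x) \;=\; \sum_{f(y) = x} \chi\!\left(\tfrac{f'(y)}{|f'(y)|}\right) |f'(y)|^{-s}\, h(y).
$$
Iterating gives periodic-orbit weights $\chi(\lambda_\theta(\hat y))|\lambda(\hat y)|^{-s}$, so the standard trace/determinant identity identifies $\zeta(s,\chi)$ with $1/\det(I-\mathcal{L}_{s,\chi})$ up to a non-vanishing factor; zeros of $\zeta(s,\chi)$ thus correspond to values of $s$ at which $1$ is an eigenvalue of $\mathcal{L}_{s,\chi}$. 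Parametrize the characters of $S^1$ by $k\in\mathbb{Z}$ via $\chi_k(\theta)=e^{ik\theta}$ and write $b=\Im(s)$.

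For $(b,k)$ in a bounded set the eigenvalues of $\mathcal{L}_{s,\chi}$ are controlled by Ruelle--Perron--Frobenius together with analytic perturbation. In case (1) this yields a simple dominant eigenvalue $\lambda(s)$ with $\lambda(\delta)=1$, accounting for the simple pole at $s=\delta$, with a genuine spectral gap on the rest of the spectrum. In case (2), for each fixed nontrivial $\chi$ a positivity/Wielandt argument shows that $\mathcal{L}_{\delta,\chi}$ has spectral radius strictly less than $1$, extending by analytic perturbation to a neighborhood of $\Re(s)=\delta$. What remains, and what constitutes the heart of the theorem, is uniform control of $\mathcal{L}_{s,\chi}$ as $|b|+|k|\to\infty$.

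This uniform control is a Dolgopyat estimate: for $\sigma:=\Re(s)$ sufficiently close to $\delta$ and $|b|+|k|$ large, there exist $N=N(b,k)$ and $\rho<1$ with
$$
\|\mathcal{L}_{s,\chi}^N h\|_{1,b,k} \;\le\; \rho\,\|h\|_{1,b,k}
$$
on an appropriate weighted Lipschitz norm. The proof proceeds by an $L^2$ cancellation argument exploiting oscillation of the complex phase $\chi(f'/|f'|)\,|f'|^{-ib}$ across pairs of inverse branches, implemented via a Dolgopyat partition of unity on the phase space. Success of the cancellation requires a quantitative local non-integrability condition (LNIC) for the temporal distance function associated with two branches, together with a non-coboundary statement for the argument cocycle $c(y):=\arg f'(y)$. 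The hypotheses of the theorem enter exactly here: if the LNIC for $\log|f'|$ degenerates, a Livsic-type argument forces $\log|f'|$ to be cohomologous to a constant and hence $f$ to be conjugate to $x\mapsto x^{\pm d}$, which is excluded in (1); and if $c$ is cohomologous to a constant along periodic orbits, the resulting rigidity forces $J$ to lie in a round circle, which is excluded in (2).

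The main obstacle is the quantitative LNIC together with the cohomological rigidity statements that identify its failure with precisely the excluded cases; the ``global-to-rigidity'' step for the holomorphic derivative cocycle is especially delicate because it must be carried out directly on the Julia set, where classical smooth-cocycle tools do not apply verbatim. Once these inputs are in hand, the Dolgopyat contraction combined with the bounded-parameter spectral analysis gives a uniform upper bound strictly less than $1$ on the spectral radius of $\mathcal{L}_{s,\chi}$ on the strip $\Re(s)\ge\delta-\epsilon$ (away from $s=\delta$ in case (1)), which translates immediately to the analytic non-vanishing of $\zeta(s,\chi)$ claimed by the theorem.
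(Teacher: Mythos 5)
Your proposal has the right overall architecture — twisted transfer operators, a Dolgopyat-type contraction estimate, and a translation to zero-freeness — but it diverges from the paper in two substantive ways, and one of them involves a misidentified mechanism.

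\textbf{Route from spectral bounds to $\zeta$.}  You propose to realize $\zeta(s,\chi)^{-1}$ as a Fredholm determinant $\det(I-\mathcal{L}_{s,\chi})$ on a nuclear space of holomorphic functions. The paper does not go this way: it works throughout on $C^1(U)$ (where the operator is not nuclear), relates the partial sums $\tilde Z_n(s,\ell)=\sum_{\sigma^n x = x}\chi_\ell(\alpha_n(x))e^{-s\tau_n(x)}$ to $\mathcal{L}^n_{s,\ell}$ via Ruelle's trace estimate (Proposition \ref{zn}), sums the series to get a bound on $\log\tilde\zeta(s,\ell)$, and then invokes Manning's theorem to pass from the \emph{symbolic} zeta function to the rational-map zeta function — a step you skip, and which matters because the Markov coding $\Sigma^+\to J$ is only bounded-to-one, not a bijection. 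Your determinant route is not incoherent, but it sits in tension with the rest of your argument: the Dolgopyat contraction you invoke is most naturally proved on a Lipschitz-type norm and in $L^2(\nu)$, not on the space where nuclearity holds, so you would still owe a bridge between the two functional settings.

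\textbf{The role of the ``not in a circle'' hypothesis.}  You attribute case (2) to a non-coboundary condition for the argument cocycle $\arg f'$, with a Livsic/rigidity step concluding that $J$ lies in a circle when that cocycle is cohomologous to a constant. That is not how the paper uses the hypothesis. The paper's Non-Local-Integrability condition (Lemma \ref{NLI2}) is a \emph{joint} nondegeneracy statement for the holomorphic pair $(\tilde\tau,\tilde\theta)$, and Theorem \ref{NLIholds} (via Zdunik) shows it fails exactly when $f$ is conjugate to a monomial — this covers both parts (1) and (2). The circle hypothesis instead feeds into the \emph{Non-Concentration Property} (Theorem \ref{ncp}), a purely geometric statement that the fractal set $J$ does not pile up near any smooth curve, proved by a quasi-self-similarity blow-up plus the Eremenko--Van~Strien rigidity theorem. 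In the Dolgopyat step the NCP is what lets one choose partner points $y^k_r$ displaced from $x^k_r$ in the direction of the combined gradient $b\nabla\tilde\tau+\ell\nabla\tilde\theta$, giving the phase separation in \eqref{xi}; the issue for $\ell\neq 0$ is not whether $\arg f'$ is a coboundary but whether the Julia set, which may be a Cantor-like fractal, spreads in the direction the oscillation requires. (When $J\subset\mathbb{R}$, the paper still proves case (1) using the one-dimensional version Theorem \ref{ncp}(2) together with Proposition \ref{realNLI}.) Your cohomological reformulation may in fact be morally related — on a circle $\arg f'$ is indeed locally constant — but it is not the argument given, and you would need to prove a separate rigidity theorem to make it go through; the paper's NCP mechanism is the genuine technical point here, and flagging it as ``delicate because classical smooth-cocycle tools do not apply verbatim'' understates that it is a different kind of statement entirely.
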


\subsection{On the proof of Theorem \ref{main1}(2)}   
%The thermodynamic formalism provides a framework for studying geometric prime number theorems via Ruelle transfer operators \cite{PP}. In order to get effective versions with power savings error terms, one needs strong spectral bounds on those transfer operators. The required spectral bounds can often be proved using Dolgopyat's approach in \cite{Do}. This approach was carried through by Pollicott-Sharp \cite{PS1} for negatively curved compact manifolds, and by Naud \cite{Na} for convex cocompact hyperbolic surfaces. Naud's work also addressed some complex dynamical questions as remarked above. 

We identify the group of all characters of $S^1$ with $\z$ via the map $\chi_\ell (\alpha)=\alpha^{\ell}$.
By the arguments of \cite{PS1}, Theorem \ref{main1} follows  from Theorem \ref{sum}. Theorem \ref{sum} then follows
from spectral bounds for a family of Ruelle transfer operators
$$\lbrace  \mathcal L_{s, \ell} : s \in \mathbb{C} \mbox{ and } \ell\in \z \rbrace$$
which act  on $C^1(U)$ for some neighbourhood $U$ of the Julia set; see Theorem \ref{decayestimate}. These spectral bounds will be our main object of study.

Our approach is based on Dolgopyat's ideas from his work on exponential mixing of Anosov flows
\cite{Do}. A similar approach was carried through by Pollicott-Sharp \cite{PS1} for negatively curved compact manifolds, and by Naud \cite{Na} for convex cocompact hyperbolic surfaces as well as for some  class of quadratic polynomials as remarked above. 

%Our task is to adapt this framework and Dolgopyat's approach to answer our current questions. 
In our setting of hyperbolic rational maps, the main technical difficulties in implementing this approach arise  from
verifying the Non-Local-Integrability (NLI) condition,  the introduction of holonomy parameters,  the fractal nature of the Julia set, and understanding how all of these interact with the intricate Dolgopyat machinery.
 %These difficulties require a proper reformulation of the usual Non-Local-Integrability (NLI) condition and  lead us to introduce a Non-Concentration-Property (NCP) for the Julia set. 
%We verify the (NLI) condition using work of Zdunik  \cite{Zd} and the NCP condition using the quasi self-similarity property
%of the Julia set.

 Here is a rough outline of our proof of the spectral bounds for $\mathcal L_{s, \ell}$.
\begin{enumerate}
\item For a test function $\phi$, the transfer operator $(\mathcal  L_{s,\ell}\phi)(x) $ is given as a sum of complex valued summand functions (see subsection \ref{Definetransferoperatorshere}).
\item Each of the summand functions has two sources of oscillation, coming from $b := \Im(s)$ and $\ell$ respectively; using the Non-Local-Integrability condition for $f$ (see Section \ref{NLISec})  we see that these two sources of oscillation are complimentary rather than cancelling each other out, so the summand functions are rapidly oscillating.
%We will show that the failure of the NLI implies that $\log |f'|$ is cohomolgous to a constant function. Work of Zdunik  \cite{Zd} shows that this is only possible when $f$ is a monomial. This is addressed in Section \ref{NLISec} 

% so we should think of our summand functions as being rapidly oscillating whenever $b, \chi$ are not both trivial;
\item When we add up our collection of rapidly oscillating summand functions we expect to get cancellation, at least away from some smooth curve where all the summand functions might line up perfectly.
\item Since the Julia set $J$ of a  hyperbolic rational map \color{black} does not concentrate near smooth curves (see the Non-Concentration-Property in Section \ref{NCPsection}) it must meet the set where we have cancellation in the sum.
\item Since the sum defining  the transfer operator always experiences cancellation, its spectral radius must be smaller than expected; see Section \ref{Dolgopyatsection}. 
%\item the Julia set does not concentrate too much near smooth curves unless $J$ is contained in a circle; this allows us to have the cancellation in the previous step
%for the direction perpendicular to $b \nabla(\log |f'|) + \ell \nabla (\text{arg} (f'))$ for any non-trivial $(b, \ell)$. This condition is formulated as the Non-Concentration-Property and we explain in Section \ref{NCPsection} how it relates to quasi-self-similarity properties of the Julia set.
%\item as $(\mathcal  L_{s,\ell}\phi)(x) $ is a sum of rapidly oscillating functions, we achieve cancellation of summands on a large set of $x $ in the Julia set;
%\item the cancellation amongst summands of $(\mathcal  L_{s,\ell}\phi)(x) $ on the Julia set is enough to give spectral bounds for $\mathcal L_{s, \ell}$ as long as $b, \ell$ are not both trivial. 
\end{enumerate}

%Step (1) is standard, and step (2) is straightforward; they are addressed in Section \ref{Markovpartitions}. 
%The key steps are (3) and (4).
%Step (3) requires some care in formulation to ensure that the different sources of oscillation don't cancel each other out; it is a form of Non-Local-Integrability condition for $f$, though in our setting it might more properly be thought of as a Joint-Non-Local-Integrability condition for $|f'|$ and $\arg f'$. We will show that the failure of NLI implies that $\log |f'|$ is cohomolgous to a constant function. Work of Zdunik  \cite{Zd} shows that this is only possible when $f$ is a monomial. This is addressed in Section \ref{NLISec} 
%We will understand this by translating it into a statement of holomorphic dynamics and quoting work of Zdunik \cite{Zd}. 

%Step (4) ensures that  the Julia set always meets a certain class of reasonably large subsets of $U$, which will be needed in the construction of Dolgopyat operators. This condition is formulated as the Non-Concentration-Property and we explain in Section \ref{NCPsection} how it relates to quasi-self-similarity properties of the Julia set.

%Steps (5) and (6) are an adaptation of well established ideas to take advantage of steps (3) and (4). We carry them out in Section \ref{Dolgopyatsection}.

\bigskip

\noindent{\bf Acknowledgement} We would like to thank Dennis Sullivan for bringing our attention to this problem, and Curt McMullen for telling us about Zdunik's work. We would also like to thank Ralf Spatzier and Mark Pollicott for helpful discussions.

\section{Hyperbolic rational maps and Ruelle operators} \label{Markovpartitions}

%\subsection{Markov partitions and the thermodynamic approach}
\subsection{Iterates of rational maps}
Let $f: \hat{ \mathbb{C}} \rightarrow \hat{\mathbb{C}}$ be a rational map of degree $d$ at least two.
Below we list some basic definitions and facts on
dynamics of the iterates of $f$. Useful references are \cite{Mi}, \cite{CG} and \cite{Mc}.

 The domain of normality for the collection of iterates $\{f^n: n=1,2, \cdots \}$ is called the
{\it Fatou set} for $f$, and its complement is called  the  {\it Julia set},
which we will denoted by $J=J(f)$. 
The post critical set $P=P(f)$ means the union of all forward images of the critical points of $f$.

A periodic orbit $\hat x$ is called {\it repelling}, {\it attracting} or {\it indifferent} 
according as its multiplier satisfies $|\lambda|>1$ or $|\lambda| <1$ or $|\lambda|=1$.

Throughout the paper, we assume $f$ is hyperbolic, that is, $f$ is eventually expanding on the Julia set $J$ in the sense that
 there are constants $\kappa_1 > \kappa > 1$ such that
\begin{equation} c_0 \cdot \kappa^n \leq  |(f^n)'(x)| \leq  \kappa_1^n \label{definec_0andkappa}\end{equation}
for all $x\in J$ and for all $n\ge 1$.
Equivalently, $f$ is hyperbolic if the post critical closure $\overline P$ is disjoint from the Julia set $J$,
or if the orbit of every critical point converges to an attracting periodic orbit.  Julia sets of hyperbolic rational functions are known to have zero area. \color{black} 
The {\it Generic Hyperbolicity Conjecture} says that every rational map can be approximated arbitrarily closely by a hyperbolic map.

Every periodic orbit of a hyperbolic map is either attracting or repelling
and there are at most $2d - 2$ attracting periodic cycles \cite[Coro. 10.16]{Mi}.  Repelling cycles, on the other hand, are dense in the Julia set. For this reason we are interested exclusively in repelling cycles for the rest of the paper.

 We will say that two rational functions are conjuagate if they are conjugate by a
 holomorphic automorphism of $\hat{\mathbb{C}}$, that is, an
  element of $\PSL_2(\mathbb{C})$.  

\subsection{Markov partitions and symbolic dynamics for rational maps.}\label{sub_rat} Replacing $f$ by a conjugate we may assume  that $\infty \notin J$. \color{black}  We will identify $\hat{\mathbb{C}} \setminus \lbrace \infty \rbrace$ with $\mathbb{C}$ throughout. 

It is well understood that hyperbolic rational maps can be studied by means of symbolic dynamics. We now recall the essential features of this approach, and refer to \cite{Ru} for further details.
For any small $\e>0$,
we can find a Markov partition $P_1, \ldots , P_{k_0}$ for $J$, that is, $P_i$'s are compact subsets of $J$ of diameter at most $\e$ such that
\begin{itemize}
\item $J = \cup_{j=1}^{k_0} P_j$,
\item each $P_i$ is the closure of its interior (relative to $J)$, that is $\overline{ \mbox{int}_{J} P_j} = P_j$ for each $j$;
 \item the interiors are disjoint in the sense that $\mbox{int}_J P_i \cap \mbox{int}_J P_j= \emptyset$ whenever $i\neq j$;
\item for each $j$, the image $f(P_j)$ is a union of partition elements $P_i$. 
\end{itemize}
 In fact, $f(P_i)$ is given as the union of all $P_j$ with $ \mbox{int}_J P_j\cap f(P_i) \neq \emptyset$. \color{black} 
 Fixing a Markov parition $P_1 \ldots P_{k_0}$ of a small diameter, we may also assume the existence of open neighbourhoods $U_j$ of $P_j$ such that: \color{black} 
\begin{enumerate}
 \item the map $f$ is injective on the closure of $U_j$ for each $j$; 
 \item $f$ is injective on the union $U_i\cup U_j$ whenever $U_i \cap U_j \neq \emptyset$; 
 \item for every $j$, $\P_j$ is not contained in $ \cup_{k\neq j} \overline{U_k}$; \color{black}
\item for every pair $i, j$ with $f(P_i) \supset P_j$ we have a local inverse $g_{ij} : U_j \rightarrow U_i$ for $f$.
\end{enumerate}

%We discuss the existence of such a partition for the case $f(z) = z^2  + c $ in Section \ref{Markovpartitions}.

 %I DON"T UNDERSTAND WHAT POLLICOTT SAYS AABOUT THIS EG PG 30 ON FRACTALS AND DIMENSION THEORY; it seems to me that in the example he gives (z goes to $z^2  + c$ with $|c|$ small), his inverse functions are either multivalued or discontinuous and therefore in neither case are they contractions. So what am I  missing? I believe this is actually OK, as everything is well defined on interiors of Markov partition elements, and because the T sends boundary elements to boundary elements. But it's a little uncomfortable. 

Associated to $f$ we have the $k_0\times k_0$ transition matrix 

\[ M_{i j} = \left\lbrace \begin{array}{l} 1 \mbox{ if } f(P_i) \supset P_j,  \\ 0 \mbox{ else. } \end{array} \right.  \]
Note that  $M$ is topologically mixing, as $f$ images of any open set meeting $J$ eventually cover $J$ (see \cite[Corollary 14.2]{Mi}). Denote by $\Sigma^+$ the shift space of admissible sequences
\[ \Sigma^+  = \lbrace \omega = (\omega_1, \omega_2, \ldots ) \in \lbrace 1 , \ldots , k_0 \rbrace ^{\mathbb{N}} : M_{\omega_k, \omega_{k+1}} = 1 \mbox{ for all }k \rbrace,\]
and by $\sigma : \Sigma^+ \rightarrow \Sigma^+$ the shift map $(\sigma\omega)_k = \omega_{k+1}$. The relationship between this symbolic space  $(\Sigma^+, \sigma)$ and the complex dynamics of $(J, f)$ is described by \cite[Proposition 2.2]{Ru}; there is a bounded to one surjective continuous map
$$\Sigma^+ \rightarrow J$$
given by 
$$ (\omega_1, \omega_2, \ldots ) \mapsto \cap_1^{\infty} f^{1-k} P_{\omega_k}.$$
This map intertwines $\sigma$ and $f$.

\begin{Nota} For an admissible multi-index $I = (i_r, \ldots i_1) \in \lbrace 1, \ldots , k_0 \rbrace ^r$ we write 
\[ g_I := g_{i_r, i_{r-1}} \circ \ldots \circ g_{i_2, i_1} : U_{i_1} \rightarrow U_{i_r} .\] 
\end{Nota} 
We record two elementary remarks about Markov partitions for later use. 

\begin{lemma}
If $f(x) = y$ and $y$ is in the interior of some $P_k$, then $x$ is in the interior of some $P_j$. 
\end{lemma}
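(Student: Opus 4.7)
The plan is to exploit the local inverse branches $g_{j,k}$ that are built into the Markov setup. Since the Julia set is totally $f$-invariant, $x$ lies in $J=\bigcup_j P_j$, so $x\in P_j$ for at least one index $j$. I would first observe that for any such $j$ one must have $P_k\subset f(P_j)$: indeed $y=f(x)$ lies in $f(P_j)$, and since $f(P_j)$ is a union of partition pieces whose $J$-interiors are pairwise disjoint, the unique piece whose relative interior contains $y$, namely $P_k$, has to appear as one of the summands. This makes the local inverse $g_{j,k}:U_k\to U_j$ available, and the injectivity of $f$ on $U_j$ combined with $x\in P_j\subset U_j$ and $f(x)=y$ forces $g_{j,k}(y)=x$.

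The main step is to transport the $J$-openness of $\mbox{int}_J P_k$ through $g_{j,k}$ to produce a $J$-open neighbourhood of $x$ contained in $P_j$. Hyperbolicity places the critical set away from $J$, so $g_{j,k}$ is a genuine biholomorphism from $U_k$ onto its image; total $f$-invariance of $J$ upgrades this to a homeomorphism $J\cap U_k\to J\cap g_{j,k}(U_k)$ between $J$-open sets, and in particular it is an open map. Applying it to the $J$-open set $\mbox{int}_J P_k$ yields a $J$-open set that sits inside $g_{j,k}(P_k)\subset P_j$, and hence inside $\mbox{int}_J P_j$. The hypothesis $y\in\mbox{int}_J P_k$ then gives $x=g_{j,k}(y)\in\mbox{int}_J P_j$.

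The argument has essentially no serious obstacle to clear: the only point that needs a mild amount of care is to keep all interiors relative to $J$ throughout, since the Julia set of a hyperbolic rational map has empty Euclidean interior and the statement would otherwise be vacuous. Beyond that, one simply combines the Markov axiom that $f(P_j)$ is a union of partition pieces, the injectivity of $f$ on each $U_j$, and the total invariance of $J$ under $f$ to conclude.
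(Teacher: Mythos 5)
Your proof is correct. It rests on the same two ingredients as the paper's proof, namely the Markov inclusion $f(P_j)\supset P_k$ (which you take care to justify from the description of $f(P_i)$ as a union of partition pieces and the disjointness of their $J$-interiors) and the injectivity of $f$ on $U_j$, but you package them differently. The paper argues by sequences: any $x_\ell\to x$ in $U_j\cap J$ has $f(x_\ell)\to y\in\mbox{int}_J P_k$, hence $f(x_\ell)\in P_k\subset f(P_j)$ eventually, and injectivity of $f$ on $U_j$ then forces $x_\ell\in P_j$ eventually, so $x\in\mbox{int}_J P_j$. You instead observe that the local inverse $g_{j,k}$ restricts, by total invariance of $J$, to a homeomorphism of $J$-open sets, so it pushes the $J$-open set $\mbox{int}_J P_k\ni y$ to a $J$-open neighbourhood of $x=g_{j,k}(y)$ contained in $P_j$. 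This is a slightly more structural phrasing of the same argument: the paper's sequence step is exactly the pointwise form of the fact that $g_{j,k}|_{J\cap U_k}$ is an open map. Either version is fine; yours avoids sequences at the cost of invoking the inverse branch and invariance of $J$ more explicitly.
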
 

\begin{proof}
Suppose that $x \in P_j$. Consider any sequence $x_\ell \rightarrow x$ in $U_j  \cap J$. Then $f(x_\ell) \rightarrow y$, so for some $\ell_0$,
$f(x_\ell) \in P_k$ for all $\ell>\ell_0$.  As $f(P_j) \supset P_k$, there exists $z_\ell\in P_j$ such that
$f(x_\ell) = f(z_\ell)$ for all $\ell >\ell_0$.
 And since $f$ is injective on $U_j$ we have that $x_\ell = z_\ell \in P_j$ eventually. Therefore $x$ is in the interior of $P_j$. 
\end{proof}
\color{black}

\begin{lemma} \label{dumblemma}
Suppose that we have two admissible sequences $I, J$ of same length $m\ge 2$ and that $i_1 = j_1 = j$. If $I \neq J$ then $g_I(U_j) \cap g_J(U_j) = \emptyset$. 
\end{lemma}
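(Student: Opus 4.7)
The plan is to proceed by induction on the length $m$, with the nontrivial content concentrated in the base case $m=2$ and the inductive step a formal reduction.

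For the base case, I would assume $I=(i_2,j)$ and $J=(j_2,j)$ with $i_2\neq j_2$, and suppose for contradiction that some $z$ lies in $g_{i_2,j}(U_j)\cap g_{j_2,j}(U_j)$. Writing $z=g_{i_2,j}(y_1)=g_{j_2,j}(y_2)$ and applying $f$ would force $y_1=y_2=:y$, so the two holomorphic inverse branches agree at $y$. Hyperbolicity guarantees that $f$ has no critical points in a neighborhood of $J$, so (taking the $U_j$'s inside such a neighborhood and connected, a standard choice) the identity theorem yields $g_{i_2,j}\equiv g_{j_2,j}$ on $U_j$. I then pick any $p\in\mathrm{int}_J P_j$, which is nonempty by the Markov axiom $P_j=\overline{\mathrm{int}_J P_j}$. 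Since each $g_{i,j}$ maps $P_j$ into its target Markov piece and restricts to a $J$-homeomorphism onto its image, $g_{i_2,j}$ sends the open (in $J$) set $\mathrm{int}_J P_j$ into an open subset of $J$ contained in $P_{i_2}$, hence into $\mathrm{int}_J P_{i_2}$; likewise $g_{j_2,j}$ sends it into $\mathrm{int}_J P_{j_2}$. Equality of the two branches at $p$ then places $g_{i_2,j}(p)$ in $\mathrm{int}_J P_{i_2}\cap\mathrm{int}_J P_{j_2}$, which is empty by the disjoint-interior property of the Markov partition --- the desired contradiction.

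For the inductive step ($m\geq 3$), I would suppose the result for length $m-1$ and let $I\neq J$ of length $m$ with $i_1=j_1=j$. If $z\in g_I(U_j)\cap g_J(U_j)$, applying $f$ produces $f(z)\in g_{I'}(U_j)\cap g_{J'}(U_j)$, where $I',J'$ are the truncations obtained by deleting the last entries of $I,J$. Either $I'\neq J'$, immediately contradicting the induction hypothesis, or $I'=J'$, in which case $i_r=j_r$ for all $r<m$ and hence $i_m\neq j_m$. Setting $w=g_{I'}(y)\in U_{i_{m-1}}$ with $y=f^{m-1}(z)$, the equalities $z=g_{i_m,i_{m-1}}(w)=g_{j_m,i_{m-1}}(w)$ would contradict the base case applied to the length-$2$ sequences $(i_m,i_{m-1})$ and $(j_m,i_{m-1})$.

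The main obstacle is really the base case, and within it the passage from pointwise equality of two inverse branches to functional equality on $U_j$. This rests on three ingredients: connectedness of $U_j$, the nonvanishing of $f'$ near $J$ (from hyperbolicity, so that the identity theorem applies to holomorphic local inverses of $f$), and the combined Markov properties that each $P_j$ is the closure of its $J$-interior and that distinct $J$-interiors are disjoint --- all furnished by the setup of Subsection~\ref{sub_rat}.
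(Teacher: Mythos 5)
Your proof is correct in outline and your inductive step is essentially identical to the paper's. The difference is in how the base case $m=2$ is handled, and it is a genuine one. You assume a point $z$ lies in $g_{i_2,j}(U_j)\cap g_{j_2,j}(U_j)$, deduce that the two inverse branches agree at $y = f(z)$, and then invoke the identity theorem to upgrade this to $g_{i_2,j}\equiv g_{j_2,j}$ on $U_j$, after which the disjoint-interior axiom delivers the contradiction. The paper instead avoids the identity theorem entirely: it picks a point $z$ in $\operatorname{int}_J P_j$ and considers the two preimages $g_{i_2,j}(z)$ and $g_{j_2,j}(z)$ directly. If they coincide, the disjoint-interior axiom forces $i_2=j_2$; if they differ, then $f$ is not injective on $U_{i_2}\cup U_{j_2}$, so condition $(2)$ of Subsection~\ref{sub_rat} gives $U_{i_2}\cap U_{j_2}=\emptyset$, and the conclusion follows since $g_I(U_j)\subset U_{i_2}$ and $g_J(U_j)\subset U_{j_2}$.

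The paper's route is preferable here for one concrete reason: your argument needs each $U_j$ to be connected so that the identity theorem carries pointwise agreement at one point to agreement on all of $U_j$. You flag this as ``a standard choice,'' and it is achievable (the partition pieces can be taken small so the $U_j$ are discs), but it is an extra hypothesis not explicitly built into the paper's setup. The paper's argument uses only the injectivity property $(2)$ of the neighborhoods, which is already assumed, and in the process actually yields the stronger statement $U_{i_2}\cap U_{j_2}=\emptyset$ rather than merely disjointness of the images. Your appeal to hyperbolicity to ensure $f$ has no critical points near $J$, and to the preceding lemma to place $g_{i_2,j}(p)$ in $\operatorname{int}_J P_{i_2}$, are both correct, so the proof is sound modulo the connectedness caveat.
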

\begin{proof}
We argue by induction on $m$. Suppose first that $m=2$. Choose a point $z$ in the interior of $P_j$. Note that $g_{j_2, j}(z)$ and $g_{i_2, j}(z)$ are in the interiors of $P_{j_2}, P_{i_2}$ respectively. If they are equal, then we conclude $P_{j_2} = P_{i_2}$ and $j_2 = i_2$. If they are unequal, then we see that $f$ is not injective on $U_{j_2} \cup U_{i_2}$, and so that $U_{j_2} \cap U_{i_2} = \emptyset$ by condition $(2)$ on the $U_i$ in Subsection \ref{sub_rat}.  This provides our base case. 

Now suppose that  we have the lemma for sequences of length $m-1$. Let $I, J$ be sequences of length $m\geq 3$ as above, and suppose that $x \in g_I(U_j) \cap g_J(U_j)$. Then 
$$ f(x) \in g_{i_{m-1}, \ldots , i_2, j}(U_j) \cap g_{j_{m-1} ,\ldots ,  j_2, j}(U_j) \subset U_{i_{m-1}} $$
hence $(i_{m-1} , \ldots , i_2, j) = (j_{m-1}, \ldots ,  j_2, j)$. Thus
$$x \in g_{j_m, j_{m-1}}(U_{j_{m-1}}) \cap  g_{i_m, j_{m-1}}(U_{j_{m-1}})$$
and so $i_m = j_m$ again by the inductive assumption. This completes the proof. 
\end{proof}

Associated to $f$, we have the distortion function
$$\tau(z) = \log |f'(z)|$$
and a rotation
$$\theta(z) = \arg(f'(z)) \in \mathbb{R} / 2\pi\mathbb{Z},$$ 
which are both defined and analytic away from the critical set.  We write 
$$\tau_N(z) := \sum_{j = 0}^{N-1} \tau(f^{j}z) \mbox{ and } \theta_N(z) := \sum_{j = 0}^{N-1} \theta(f^{j}z),$$
and  denote $\alpha(z) = e^{i\theta(z)}$ and  $\alpha_N(z) = e^{i\theta_N(z)}$. \color{black}
Note that $\tau$ is eventually positive on $J$, that is $\tau_N > 0$ for some $N$. 

We denote by $\delta=\delta(f)$ the unique positive zero of the pressure $$P(-s\tau) := \sup_\nu\{ h_f( \nu) - \int s\tau d\nu\}$$ where
the supremum is taken over all $f$ invariant probability measures on $J$ and $h_f(\nu)$ is the measure-theoretic entropy of $f$.

There exists a unique $f$-invariant probability measure $\nu=\nu_{-\delta \tau}$ on $J$, called the equilibrium state of the potential $-\delta \tau$, such that
$P(-\delta \tau)=h_f( \nu) - \delta \int \tau d\nu  = 0$. 
The measure $\nu$ is equivalent to a $\delta$ dimensional Hausdorff measure on $J$
(see \cite{Bo2, Su}). It follows that $\delta$ is the Hausdorff dimension of the Julia set, and that $0 < \delta < 2$. 

%It is equivalent to the $\delta$-dimensional Hausdorff measure on the Julia set, hence $\delta$ is equal to the Hausdorff dimension of $J$ . From this it follows that $0 < \delta < 2$. 

\begin{Nota}\label{nui}\rm
 We write $U$ for the {\em disjoint}  union of the $U_j$'s. Abusing notation we regard $\tau$ and $\theta$ as functions on $U$. We regard $\nu$ as a measure on $U$ by taking $\nu = \sum \nu_j$ where $\nu_j$ is the restriction of $\nu$ to the copy of $P_j$ sitting inside $U_j$. Since the boundary points of $\nu$ have zero mass this gives a probability measure on $U$. \end{Nota}

\subsection{Ruelle transfer operators.} \label{Definetransferoperatorshere} Our main results will follow from spectral estimates on twisted transfer operators, which we now introduce. We write $C^1(U)$ for $C^1(U, \c)$.
 For a character $\chi: S^1 \rightarrow S^1$ and a complex number $s \in \mathbb{C}$, we consider the following transfer operators

\begin{eqnarray*} \mathcal{L}_{s, \chi} : C^1(U) &\rightarrow &C^1(U)\\ ( \mathcal{L}_{s, \chi}h) (x) &:=& \sum_{i: M_{ij} = 1 } e^{-s\tau(g_{ij}x)}\chi(\alpha(g_{ij}x))h(g_{ij}x)  \mbox{ when } x \in U_j .\end{eqnarray*}
We will sometimes write $\chi_\ell$ for the character 
$\chi_\ell(\alpha) = \alpha^\ell$, and 
$$\mathcal{L}_{s, \ell} := \mathcal{L}_{s, \chi_\ell} .$$ We will denote $\mathcal{L}_s := \mathcal{L}_{s, 0}$.

% In fact we need the following slightly strengthened result. 
Our main technical result in this paper is the following:
\begin{thm} \label{trans}  
\begin{enumerate}
\item Suppose that $f$ is not conjugate to a monomial. Then,
for any $\e>0$, there exist $C_\e>0$, $0<\e_0=\e_0(\e)<1$ and $ 0<\rho_\e <1$ such that for all $ \Re(s)> \delta-\e_0$ and
$|\Im (s)| > 1$,
$$ \|\L_{s }^n\|_{C^1}\le C_\e   |\Im (s)|^{1+\e} \rho_\e^n .$$ 

\item
Suppose that the Julia set of $f$ is not contained in a circle. Then,
for any $\e>0$, there exist $C_\e>0$, $0<\e_0=\e_0(\e)<1$ and $ 0<\rho_\e <1$ such that for all $ \Re(s)> \delta-\e_0$ and
$|\Im (s)|+|\ell| > 1$,
$$ \|\L_{s,\ell }^n\|_{C^1}\le C_\e  ( |\Im (s)| + |\ell| )^{1+\e} \rho_\e^n .$$ 
\end{enumerate}

\end{thm}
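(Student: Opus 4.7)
The plan is to implement the Dolgopyat machinery in this symbolic/fractal setting, following the template of \cite{Do}, \cite{PS1} and \cite{Na}, but with two new twists: the holonomy parameter $\ell$ sits alongside $b:=\Im(s)$ in the oscillating phase, and the base space is a fractal Julia set rather than a smooth manifold. I would first reduce the $C^1$ bound to an $L^2$ bound. A standard Lasota--Yorke style argument for transfer operators shows that if $a:=\Re(s)$ is close enough to $\delta$ (with $\epsilon_0$ small), one has a two-term inequality of the form
\begin{equation*}
\|\mathcal{L}_{s,\ell}^n h\|_{C^1} \le C_1 (|\Im(s)|+|\ell|) \,\|\mathcal{L}_{s,\ell}^n h\|_{L^2(\nu)} + C_1\theta^n \|h\|_{C^1}
\end{equation*}
for some $\theta<1$ depending only on the expansion rate $\kappa$ in \eqref{definec_0andkappa}. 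This means it suffices to prove exponential decay of $\mathcal{L}_{s,\ell}^n$ in a suitable weighted $L^2$ norm, and then absorb the factor $(|\Im s|+|\ell|)$ into $(|\Im s|+|\ell|)^{1+\epsilon}$ at the price of a slightly worse $\rho_\epsilon$.

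The next step is to set up the weighted $L^2$ framework. Writing $r:=a-\delta$ and normalising by the leading eigenfunction $h_r$ of $\mathcal{L}_a$, I would work with the normalised operators $\widetilde{\mathcal{L}}_{s,\ell} = h_r^{-1}\mathcal{L}_{s,\ell}(h_r\,\cdot)$ and the conformal measure $\nu$, so that $|\widetilde{\mathcal{L}}_{a,0}| = \mathcal{L}_\delta^*$ has spectral radius $1$. A pointwise triangle inequality then yields $|\widetilde{\mathcal{L}}_{s,\ell}^{n_0}h|\le \widetilde{\mathcal{L}}_a^{n_0}(\beta|h|)$ whenever $\beta$ dominates the cancellation of phases in the $n_0$-th iterate sum. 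The goal is to produce, for some fixed large $n_0=n_0(\epsilon)$, a finite collection of \emph{Dolgopyat weight functions} $\beta_J\in C(U)$ with $0<\beta_J\le 1$, $\beta_J \equiv 1-\eta$ on a set of controlled positive $\nu$-measure, $\beta_J\equiv 1$ elsewhere, and such that for every $h$ satisfying a cone-type Lipschitz condition one has
\begin{equation*}
\bigl|\widetilde{\mathcal{L}}_{s,\ell}^{n_0}h\bigr|(x) \le \widetilde{\mathcal{L}}_a^{n_0}(\beta_J|h|)(x) \quad\text{for all }x\in U.
\end{equation*}
Iterating this inequality and using that $\widetilde{\mathcal{L}}_a^{n_0}(\beta_J \cdot)$ has $L^2(\nu)$-norm strictly less than one (because $\beta_J<1$ on a set of uniform $\nu$-mass and $\widetilde{\mathcal{L}}_a$ has a spectral gap on $L^2(\nu)$) produces the desired geometric decay $\|\widetilde{\mathcal{L}}_{s,\ell}^{kn_0}h\|_{L^2}\le \rho_\epsilon^{kn_0}\|h\|_{L^2}$.

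The heart of the argument is the construction of the weights $\beta_J$, and this is where NLI and the Non-Concentration Property (NCP) enter. Fix $N\asymp \log(|b|+|\ell|)$ so that the inverse branches $g_I$ of $f^N$ contract the diameter of each $U_j$ to scale $\approx (|b|+|\ell|)^{-1}$. Each summand in $\widetilde{\mathcal{L}}_{s,\ell}^N h(x)$ carries a phase $b\tau_N(g_I x)+\ell\,\theta_N(g_I x)$. For two distinct admissible branches $g_I,g_{I'}$ landing in a common neighbourhood, the phase difference is
\begin{equation*}
\Phi_{I,I'}(x) := b\bigl(\tau_N\circ g_I - \tau_N\circ g_{I'}\bigr)(x) + \ell\bigl(\theta_N\circ g_I - \theta_N\circ g_{I'}\bigr)(x).
\end{equation*}
The NLI condition of Section \ref{NLISec} provides pairs of branches and a direction along which the vector $(\nabla(\tau_N\circ g_I - \tau_N\circ g_{I'}), \nabla(\theta_N\circ g_I - \theta_N\circ g_{I'}))$ is not a scalar multiple of $(1,0)$, so that for \emph{any} ratio $|b|/|\ell|$ the gradient of $\Phi_{I,I'}$ has size $\gtrsim |b|+|\ell|$. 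Hence the two summands cannot cancel uniformly: on one of two neighbouring $(|b|+|\ell|)^{-1}$-balls the phase difference is bounded away from $0\pmod{2\pi}$, and triangle inequality gives a strict gain $|e^{i\Phi_{I,I'}(x)}+1|\le 2(1-\eta)$ there. The NCP of Section \ref{NCPsection} then guarantees that, on each mesoscopic piece of Julia set where we have laid down potential cancellation balls, a definite $\nu$-proportion of points actually falls into such a ball — the Julia set cannot hide in the complementary "good phase" curve. Indexing the finitely many possible configurations by a combinatorial parameter $J$ yields the Dolgopyat family $\{\beta_J\}$.

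I expect the main obstacle to be the verification of NLI in the presence of the holonomy twist and the careful choice of branches making $\Phi_{I,I'}$ have non-degenerate gradient uniformly in the ratio $b:\ell$. Dichotomising, if $|\ell|\ll |b|$ one recovers Naud's situation and uses nonvanishing of $\nabla(\tau_N\circ g_I -\tau_N\circ g_{I'})$; if $|\ell|\gtrsim |b|$ the angular part $\theta_N\circ g_I-\theta_N\circ g_{I'}$ must contribute, and here one invokes Zdunik's rigidity (cohomological triviality of $\theta$ would force $f$ to be conjugate to $x^{\pm d}$, respectively to have Julia set in a circle), which is exactly why the two hypotheses in the two cases of the theorem appear. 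Once NLI is in hand, combining it with NCP and feeding the resulting $\beta_J$ into the weighted $L^2$ contraction argument completes the proof of the spectral bound, and the Lasota--Yorke step then upgrades it to the stated $C^1$ estimate.
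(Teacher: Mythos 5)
Your plan follows the same Dolgopyat route as the paper: reduce the $C^1$ bound to an $L^2(\nu)$ bound, normalize the transfer operator, build Dolgopyat weight functions $\beta_\Lambda$ enforcing a pointwise dominance $|\hat{\mathcal{L}}^{N}_{s,\ell}h| \le \hat{\mathcal{L}}^N_a(\beta_\Lambda H)$, and iterate. The architecture is the right one, but one of your steps has a genuine gap and another would be an awkward detour. The gap is in the sentence claiming $\|\hat{\mathcal{L}}_a^{n_0}(\beta_J\,\cdot)\|_{L^2(\nu)} < 1$ ``because $\beta_J < 1$ on a set of uniform $\nu$-mass and $\widetilde{\mathcal{L}}_a$ has a spectral gap on $L^2(\nu)$.'' The set where $\beta_J < 1$ is a union of balls of radius $\asymp (|b|+|\ell|)^{-1}$ inside the Julia set, which shrinks to zero; to conclude it carries a definite $\nu$-fraction of the ambient cylinder \emph{uniformly as} $|b|+|\ell|\to\infty$ is precisely a doubling statement for the restricted conformal measures $\nu_j$, and on a fractal Julia set this is not automatic. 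It is one of the genuinely new technical points in the rational-map setting and is established separately (Propositions \ref{dob} and \ref{45}), then converted into an integral comparison $\int_{P_k}\hat{\mathcal{L}}_\delta^N H^2\,d\nu_k \lesssim \int_{\hat S_k}\hat{\mathcal{L}}_\delta^N H^2\,d\nu_k$. Moreover, the contraction mechanism is not an $L^2$ spectral gap for $\hat{\mathcal{L}}_a$ — $\hat{\mathcal{L}}_\delta$ is a Markov operator with $1$ as an eigenvalue — but Cauchy--Schwarz, the invariance $\hat{\mathcal{L}}_\delta^*\nu=\nu$, and the cone regularity of $\hat{\mathcal{L}}_\delta^N H^2$.

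The second issue is the proposed dichotomy between $|\ell|\ll|b|$ and $|\ell|\gtrsim|b|$, which is unnecessary and would be delicate to run uniformly. Lemma \ref{NLI2} gives $|\nabla(\tilde\tau,\tilde\theta)(u)\cdot v|\ge\delta_2|v|$ for \emph{every} $v\in\br^2$, i.e.\ the Jacobian of the joint map $(\tilde\tau,\tilde\theta)$ is bounded below as a $2\times 2$ matrix; consequently $|b\nabla\tilde\tau+\ell\nabla\tilde\theta|\gtrsim\delta_2(|b|+|\ell|)$ with no case split, and together with the NCP this produces a phase offset $\gtrsim\delta_1\delta_2\epsilon_1$ independent of the ratio $b:\ell$. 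Two smaller deviations worth noting: the paper fixes $N=N_0+n_1$ once and for all (so only two competing branches $v_1,v_2$ are ever in play), shrinking the ball scale to $\tilde\epsilon=\epsilon_1/(|b|+|\ell|)$ rather than growing $N\asymp\log(|b|+|\ell|)$; and it normalizes by the eigenfunction at $s=\delta$, passing to $a\ne\delta$ by continuity in $a$, rather than re-normalizing at each $a$.
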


\begin{Nota}
 
\rm For a holomorphic function $g: \c\rightarrow \c$ we denote the derivative by $g'$ as usual.

Often we will have to deal with real valued functions instead, in which case we denote the gradient of $h : \c = \br^2 \rightarrow \br$ by $\nabla h: \c = \br^2 \rightarrow \br^2$. We will denote the Euclidean norm in $\br^2$ by $|\cdot|$. 

Sometimes we will be driven to consider functions that are complex valued but not necessarily holomorphic. For these we use the notation $\nabla h$ to mean the Jacobian of $h$ regarded as a map $\br^2 \rightarrow \br^2$, and write $|\nabla h|$ to mean the operator norm of the matrix $\nabla h$. We write $||h||_{C^1(W)}$ for the supremum of $|\nabla h(u)|$ as $u$ ranges over $W$. 

In two instances we shall be forced to consider $C^2$ norms of a function $h$. By this we simply mean the max of the $C^1$ norm and the supremum of $X^2 h$ as $X$ ranges over unit tangent vectors. 
  \color{black}
\end{Nota}

We define a modified $C^1$ norm  on $C^1(U)$ by taking 
\be  ||h||_{r} := \begin{cases} ||h||_\infty + \frac{||\nabla h ||_{\infty}}{ r}& \text{ if $r\ge 1$}
\\  ||h||_\infty + {||\nabla h ||_{\infty}} &\text{if $0<r<1$}.\end{cases} \ee
\color{black}  (The point of this is that the operators $\mathcal L_{s, \ell}$ are uniformly bounded in $|| \cdot ||_{|b| + |\ell |}$ norm at least for large $|b| + |\ell|$, whereas they are not uniformly bounded in the usual $C^1$ norm.) 

By the arguments of Ruelle, Theorem \ref{trans} is a consequence of the following  estimates (see \cite{PS1} and \cite[Section 5]{Na} for readable accounts).

\begin{theorem} \label{decayestimate} 
\begin{enumerate}
\item Suppose that $f$ is not conjugate to a monomial. 
Then there exist  $C > 0, \rho \in (0, 1)$  and $\e_0 > 0$ such that for any $h\in C^1(U)$ and any $n\in \N$,
\be  || \mathcal{L}^n_{s}h||_{L^2(\nu)} \leq C  \rho^{n}||h||_{|\Im(s)| } \ee
for all $s\in \c$ with $\Re(s) > \delta -  \e_0$ and $|\Im(s)| \geq 1$. 

\item
Suppose that the Julia set of $f$ is not contained in a circle.
Then there exist  $C > 0, \rho \in (0, 1)$  and $\e_0 > 0$ such that for any $h\in C^1(U)$ and any $n\in \N$,
\be  || \mathcal{L}^n_{s, \ell}h||_{L^2(\nu)} \leq C  \rho^{n}||h||_{(|\Im(s)| + |l|)} \ee
for all $s\in \c$ and $\ell \in \z$ with $\Re(s) > \delta -  \e_0$ and $|\Im(s)| + |\ell| \geq 1$. 
\end{enumerate}
\end{theorem}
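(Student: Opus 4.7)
The plan is to adapt Dolgopyat's scheme, as implemented by Pollicott--Sharp and Naud in related settings, to our two-parameter family of twisted transfer operators. By a standard complex-analytic perturbation argument it suffices to produce the estimate on the critical line $\Re(s)=\delta$ and then extend to a strip $\Re(s)>\delta-\e_0$ by treating $\mathcal{L}_{s,\ell}$ as a small perturbation of $\mathcal{L}_{\delta+i\Im(s),\ell}$. Fixing $\sigma=\delta$, $b=\Im(s)$, and normalizing by the positive leading eigenfunction $h_0$ of $\mathcal{L}_\delta$, I would replace $\mathcal{L}_{s,\ell}$ by the conjugate $\tilde{\mathcal{L}}_{s,\ell}h:=h_0^{-1}\mathcal{L}_{s,\ell}(h_0 h)$, so that $\tilde{\mathcal{L}}_\delta 1=1$ and $\nu$ is $\tilde{\mathcal{L}}_\delta$-invariant. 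Iterating yields
\[
\tilde{\mathcal{L}}_{s,\ell}^{\,n} h(x)=\sum_I \omega_I(x)\,\exp\!\bigl(-ib\tau_n(g_I x)+i\ell\theta_n(g_I x)\bigr)\,h(g_I x),
\]
where the two complementary oscillation sources advertised in the introduction are clearly displayed. A standard Lasota--Yorke bound then shows that $\tilde{\mathcal{L}}_{s,\ell}^{\,n}$ is uniformly bounded in the $\|\cdot\|_{|b|+|\ell|}$ norm, reducing the theorem to producing a uniform contraction on an invariant cone of Lipschitz pairs $(h,H)$ with $|h|\le H$ and $|\nabla H|\le C_0(|b|+|\ell|)H$.

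The heart of the proof is the construction, for a Dolgopyat time $n_0\asymp \log(|b|+|\ell|)$, of a finite family of Dolgopyat operators $\mathcal{N}_J$ acting on this cone, such that (i) for every admissible $(h,H)$ there is an index $J=J(h,H)$ with the pointwise majorization $|\tilde{\mathcal{L}}_{s,\ell}^{\,n_0} h|\le \mathcal{N}_J H$, and (ii) $\int (\mathcal{N}_J H)^2\,d\nu \le \rho^2 \int H^2\,d\nu$ for a uniform $\rho<1$. Each $\mathcal{N}_J$ is obtained from the majorant $\tilde{\mathcal{L}}_\delta^{\,n_0} H$ by multiplying by a smooth bump that is strictly less than $1$ on a finite collection of Julia-set-meeting balls $B$ on which two distinguished inverse branches $g_I,g_{I'}$ are forced to cancel because the phase difference
\[
\Psi(z):=b\bigl(\tau_{n_0}\!\circ g_I-\tau_{n_0}\!\circ g_{I'}\bigr)(z)+\ell\bigl(\theta_{n_0}\!\circ g_I-\theta_{n_0}\!\circ g_{I'}\bigr)(z)
\]
satisfies $|\nabla\Psi|\gtrsim |b|+|\ell|$ in a direction tangent to $J$ throughout $B$.

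Producing such pairs $(I,I')$ for \emph{every} direction $(b,\ell)$ is precisely the role of the Non-Local-Integrability condition of Section \ref{NLISec}: one must show that the vector-valued function $z\mapsto\bigl((\tau_{n_0}\!\circ g_I-\tau_{n_0}\!\circ g_{I'})(z),\,(\theta_{n_0}\!\circ g_I-\theta_{n_0}\!\circ g_{I'})(z)\bigr)$ has rank $2$ somewhere on $J$. Under the hypothesis of part (2) (respectively, part (1)), NLI should follow by a Livsic-type cohomology argument: a failure of NLI would produce a nontrivial real-linear combination $c_1\tau+c_2\theta$ that is cohomologous to a constant along periodic orbits, and unwinding this relation forces either a global conformal symmetry of $J$ (confining it to a circle) or conjugacy of $f$ to a monomial, contradicting the standing hypothesis. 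Complementary to NLI is the Non-Concentration Property of Section \ref{NCPsection}: a ball of radius $r$ meeting $J$ carries $\nu$-mass comparable to $r^\delta$ that does not concentrate on any smooth curve, so the cancellation zones identified above cut off a definite fraction of the $\nu$-mass of each $B$, yielding (ii).

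Iterating (i)--(ii) a total of $\lfloor n/n_0\rfloor$ times gives $\|\tilde{\mathcal{L}}_{s,\ell}^{\,n} h\|_{L^2(\nu)}\le C\rho^{\lfloor n/n_0\rfloor}\|h\|_{|b|+|\ell|}$, and after absorbing the logarithmic factor $n_0\asymp\log(|b|+|\ell|)$ into a slightly worse contraction base one obtains the stated exponential rate. The main obstacle, in my view, is the NLI verification: one must rule out \emph{all} degenerate directions in the two-parameter family $(b,\ell)\in\br\times\z$ using a single geometric hypothesis on $J$, so the cohomology analysis must produce two independent witnesses (one for the $b$-direction, one for the $\ell$-direction) rather than the single witness needed in the classical Dolgopyat arguments for flows. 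The Lasota--Yorke bound, the bump-function construction, the iteration, and the final perturbation to $\Re(s)>\delta-\e_0$ are then comparatively routine.
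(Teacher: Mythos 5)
Your overall architecture matches the paper's: normalize by the Perron eigenfunction, prove a Lasota--Yorke estimate in the $\|\cdot\|_{|b|+|\ell|}$ norm, construct Dolgopyat operators that both majorize $\hat{\mathcal{L}}_{s,\ell}^{N}$ and contract in $L^2(\nu)$, and iterate. The NLI and NCP ingredients are correctly identified as the two pillars. However, there are genuine gaps in the way you propose to verify NLI, and a technical discrepancy in the Dolgopyat time.

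The main missing idea is the use of holomorphy of $f$. You assert that ``the cohomology analysis must produce two independent witnesses (one for the $b$-direction, one for the $\ell$-direction),'' and sketch a Livsic-type argument that rules out every nontrivial real combination $c_1\tau+c_2\theta$ being cohomologous to a constant. This is substantially harder than necessary, and the proposed Livsic argument is not developed enough to be convincing. The paper instead observes (Lemma~\ref{NLI2}) that since $f$ is holomorphic, the ``temporal distance'' function $\hat\tau+i\hat\theta$ is \emph{holomorphic}, so the Cauchy--Riemann equations turn a single nonvanishing-gradient witness for $\hat\tau$ alone into the full rank-$2$ local-diffeomorphism property of $(\tilde\tau,\tilde\theta)$. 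Thus only NLI for $\tau$ needs to be verified, and the holonomy variable comes along for free. The verification of NLI for $\tau$ is then not a Livsic argument either: the paper passes to the universal cover of $\hat{\mathbb{C}}\setminus\overline{P}$, builds normalized distortion functions for inverse branches, shows (Proposition~\ref{NLImust}) that failure of NLI forces $\tau$ to be cohomologous to $\log d^{1/\delta}$ on $J$, and then invokes Zdunik's rigidity theorem to conclude $f\sim x^{\pm d}$. Your proposal gives no indication of how the cohomological relation $c_1\tau+c_2\theta\sim\mathrm{const}$ would be converted into a geometric conclusion about $J$ or $f$; Zdunik's theorem is exactly the nontrivial rigidity input that makes this step work, and it is absent from your outline.

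Two smaller points. First, you take the Dolgopyat time to be $n_0\asymp\log(|b|+|\ell|)$ and then propose to ``absorb the logarithmic factor''; the paper instead fixes $N=N_0+n_1$ once and for all and lets the \emph{spatial} scale $\tilde\epsilon=\epsilon_1/(|b|+|\ell|)$ shrink with the frequency, which yields a contraction constant $\rho$ genuinely independent of $(b,\ell)$ and avoids the absorption step. Both variants can be made to work, but they are not the same argument, and the fixed-$N$ version is what the cited works of Naud and Pollicott--Sharp (and this paper) actually implement. Second, you do not address the case split between parts (1) and (2): when $J$ is contained in a circle (but $f$ is not a monomial) the full NCP fails, and the paper handles this by specializing to $\ell=0$, conjugating $J$ into $\mathbb{R}$, and using the one-real-dimensional variants Proposition~\ref{realNLI} and Theorem~\ref{ncp}(2). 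Without this accommodation, your argument as written would not deliver part (1) for maps such as $x^2+c$ with $c<-2$.
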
 
After preparations in Sections 3 and 4, the proof of Theorem  \ref{decayestimate} will be completed in Section \ref{Dolgopyatsection}.

%\begin{proof} This follows from Theorem \ref{decayestimate} by arguments of Ruelle. . \end{proof}

\section{Non-Local-Integrability and hyperbolic rational maps}   \label{NLISec}  \label{definenicesections} The goal of this section is to establish the Non-Local-Integrability condition for hyperbolic rational maps which are not conjugate to monomials: see Theorem \ref{NLIholds}. We begin with a precise formulation of the NLI condition.

\subsection{The NLI condition}  We retain the notation of subsection \ref{sub_rat}. 
Let $f$ be a hyperbolic rational map of degree $d\ge 2$. \color{black}
Consider an admissible sequence $(\ldots {\xi}_{-2}, {\xi}_{-1}, \xi_0)$ with local  inverses $g_{\xi_{-k}, \xi_{-(k+1)}}: U_{\xi_{-k}}\to
U_{\xi_{-(k+1)} }$, $k\ge 0$. For any fixed $n\ge 0$, we have the section 
$$g^n_\xi := g_{(\xi_{-n}, \ldots, \xi_{-1}, \xi_0)}:U_{\xi_0}\to U_{\xi_{-n}}$$ of $f^n$ defined on $U_{\xi_0}$. The sum $\sum \tau_n( g^n_\xi(x))$ always  diverges for any $x \in U_{\xi_0}$ as a consequence of the eventual positivity of $\tau$. On the other hand
\be \tau_\infty (\xi, x, y) := \sum_1^\infty \left(\tau(g^n_\xi (x) )  - \tau(g^n_\xi (y) )\right) \ee
always converges for any pair $x, y \in U_{\xi_0}$, just using the contraction properties of $g^n_\xi$ and the Lipschitz property of $\tau$. 

\begin{dfn}[Non-Local-Integrability] The function $\tau$ satisfies the NLI property if there exist $j \in \lbrace 1 , \ldots , k_0 \rbrace$, points $x_0, x_1 \in P_j$, and admissible sequences $(\ldots {\xi}_{-2}, {\xi}_{-1}, j), (\ldots {\hat \xi}_{-2}, {\hat \xi}_{-1}, j)$ with  property that the gradient of
\be \hat \tau(x):= \tau_\infty ({\xi}, x, x_0) - \tau_\infty ({\hat \xi}, x, x_0)   \label{NLI1eq} \ee
is non-zero at $ x_1$;   note  if this holds for any single choice of $x_0\in P_j$, it must hold for all choices of $x_0\in P_j$. 
\end{dfn}

In fact it will be more convenient to use the following reformulation of the NLI property. 
\begin{lemma}[Non-Local-Integrability II] \label{NLI2} Suppose that $\tau$ satisfies the
{\text NLI} property
with respect to sequences ${\xi}, {\hat \xi}$  and the points $x_0, x_1 \in P_j.$ Then there exists an open neighbourhood $U_0$ of $x_1$ and constants $\delta_2 \in (0, 1), N \in \mathbb{N}$ such that the following holds: for any $n \ge N$, the map
\be (\tilde \tau, \tilde \theta) := (\tau_n \circ g^n_{\xi} - \tau_n\circ g^n_{\hat \xi}, \theta_n \circ g^n_{\xi} - \theta_n \circ g^n_{\hat \xi}) : U_0 \rightarrow \mathbb{R}\times \mathbb{R} / 2\pi\mathbb{Z} \label{NLI3eq}  \ee 
is a local diffeomorphism  satisfying
 $\|(\tilde \tau, \tilde \theta) \|_{C^2} < \frac{1}{ 2\delta_2}$  and $$ \inf_{u \in U_0} |\nabla(\tilde \tau, \tilde \theta)(u) \cdot v| \geq \delta_2|v|\quad\text{for all $v \in \mathbb{R}^2$}.$$\color{black} % By adjusting $\delta_2$ we may further assume that all second derivatives of $\tilde \tau, \tilde \theta$ are bounded above by $\frac{1}{2\delta_2}$. 
\end{lemma}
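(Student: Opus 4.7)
The plan is to exploit the holomorphicity of $f$: once we recognize that $(\tilde\tau,\tilde\theta)$ are the real and imaginary parts of a holomorphic function $\tilde F_n$ on $U_0$, conformality of $\tilde F_n$ reduces the two-dimensional non-degeneracy statement to the non-vanishing of one complex derivative, and the NLI hypothesis transfers via the Cauchy--Riemann equations.

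The first step is to set up this holomorphic structure. Since $f$ is hyperbolic, $f'$ does not vanish on a neighbourhood of $J$, so $\phi := \log f'$ is locally a holomorphic function with $\Re\phi = \tau$ and $\Im\phi \equiv \theta \pmod{2\pi}$. I take $U_0$ a small simply connected neighbourhood of $x_1$ and $N$ large enough that, for all $n \ge N$, $g^n_\xi(U_0)$ and $g^n_{\hat\xi}(U_0)$ are small simply connected subsets of $U_{\xi_{-n}}$, $U_{\hat\xi_{-n}}$ (hence avoid the critical set of $f^n$ by hyperbolicity). With consistent choices of branches,
\[
\tilde F_n := \tilde\tau + i\tilde\theta = \log (f^n)' \circ g^n_\xi - \log (f^n)' \circ g^n_{\hat\xi}
\]
is a holomorphic function on $U_0$. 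Its real Jacobian at $u$ equals multiplication by $\tilde F_n'(u) \in \c$, so $|\nabla(\tilde\tau,\tilde\theta)(u)\cdot v| = |\tilde F_n'(u)|\,|v|$ for every $v \in \br^2$. The problem reduces to bounding $|\tilde F_n'|$ below and $\|\tilde F_n\|_{C^2}$ above on $U_0$, uniformly in $n \ge N$.

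The second step is to show $C^2$ convergence of $\tilde F_n$ to a holomorphic $\hat F = \hat\tau + i\hat\theta$ with real part the function $\hat\tau$ from the NLI definition. Using $\tau_n \circ g^n_\xi = \sum_{m=1}^n \tau \circ g^m_\xi$ and the analogous identity for $\theta_n$, I write
\[
\tilde F_n(z) - \tilde F_n(x_0) = \sum_{m=1}^n \bigl(\phi(g^m_\xi z)-\phi(g^m_\xi x_0)\bigr) - \sum_{m=1}^n \bigl(\phi(g^m_{\hat\xi}z)-\phi(g^m_{\hat\xi}x_0)\bigr).
\]
Hyperbolicity gives $|(g^m_\xi)^{(k)}|,|(g^m_{\hat\xi})^{(k)}| \le C\lambda^{-m}$ on $U_j$ for $k=1,2$ (the $k=2$ bound follows by routine induction from the chain rule), and $\phi$ has bounded derivatives on a compact neighbourhood of $J$ disjoint from the critical set. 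Each summand therefore has $C^2$ norm $O(\lambda^{-m})$, so the series converges in $C^2(U_0)$ to a holomorphic limit with real part $\hat\tau$.

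For the final step, the NLI hypothesis $\nabla\hat\tau(x_1) \ne 0$ is equivalent, via Cauchy--Riemann, to $\hat F'(x_1) \ne 0$. I shrink $U_0$ and pick $\delta_2 \in (0,1)$ with $|\hat F'| \ge 2\delta_2$ and $\|\hat F\|_{C^2(U_0)} < 1/(4\delta_2)$ on $U_0$. The $C^2$ convergence then lets me pass these bounds with a margin to $\tilde F_n$ for $n \ge N$ (after enlarging $N$): $|\tilde F_n'| \ge \delta_2$ and $\|\tilde F_n\|_{C^2(U_0)} < 1/(2\delta_2)$. Via the conformality identification in Step 1, this yields both the lower bound $|\nabla(\tilde\tau,\tilde\theta)\cdot v| \ge \delta_2|v|$ and the $C^2$ bound; the local diffeomorphism property then follows from the inverse function theorem. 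The main obstacle is conceptual rather than technical: one must recognize that holomorphicity collapses a two-dimensional non-degeneracy statement to the non-vanishing of a single complex derivative. After that, everything reduces to standard hyperbolicity estimates; the only minor technical care is in choosing branches of $\log$ consistently, which is handled by taking $U_0$ small and $n$ large.
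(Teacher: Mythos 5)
Your proposal is correct and takes essentially the same approach as the paper: both recognize that $\tilde\tau + i\tilde\theta$ is (a branch of) a holomorphic function, use the Cauchy--Riemann relation $|\hat F'|=|\nabla\hat\tau|$ so that NLI is exactly the non-vanishing of the complex derivative at $x_1$, and then transfer this to the finite approximants by locally uniform convergence of derivatives. The paper phrases it via $h = e^{\hat\tau+i\hat\theta}$ and is terser about the convergence; you spell out the conformality identification and the $C^2$ convergence explicitly, which is a welcome expansion of the paper's compressed "converge locally uniformly and in $C^1$ norm" step.
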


\begin{proof}
Define
\be \theta_\infty(\xi, x, y) := \sum_1^\infty \theta(g^n_\xi (x) )  - \theta(g^n_\xi (y)) \in \mathbb{R} / 2\pi\mathbb{Z}, \ee
which is again convergent on $U_{\xi_0}\times U_{\xi_0}$. 

Since $f$ is holomorphic we see that $h:= e^{\hat{\tau} + i\hat{\theta}}$ is holomorphic. The NLI property for $\tau$ implies then that the derivative of $h$ is non-zero, and so that $h$ defines a local diffeomorphism from a neighbourhood of $x_1$ to some open subset of $\mathbb{C}$. Since 
the derivatives of \eqref{NLI3eq} converge locally uniformly and in $C^1$ norm to  $\log h$, we are finished. 
 \end{proof}

%\begin{dfn}[Non-Local-Integrability] The functions $\tau, \theta$ satisfy the NLI property if there exist $j \in \lbrace 1 , \ldots , k_0 \rbrace$, points $x_0, x_1 \in P_j$, and admissible sequences $(\ldots {\xi}_{-2}, {\xi}_{-1}, j), (\ldots {\hat \xi}_{-2}, {\hat \xi}_{-1}, j)$ with the property that the Jacobian of the map
%\be (\hat \tau(x), \hat \theta(x)) := (\tau_\infty ({\xi}, x, x_0) - \tau_\infty ({\hat \xi}, x, x_0) , \theta_\infty ({\xi}, x, x_0) - \theta_\infty ({\hat \xi}, x, x_0)  ) \label{NLI1eq} \ee
%is non-singular at $ x_1$; note then if this holds for any single choice of $x_0$ it must hold for all choices of $x_0$. 
%\end{dfn}
%\begin{remark}
%In some sense the condition described above should be thought of as a a {\it joint} NLI condition for $\tau$ and $\theta$. In our case, however, where $\tau$ and $\theta$ will be the real and imaginary parts of a holomorphic function, this joint NLI condition is entirely equivalant to the NLI condition for $\tau$ described in this context by Naud \cite{Na}, so we retain his terminology. 
%\end{remark}
We will also need the following observation later:
\begin{Prop}\label{realNLI}
If the Julia set $J$ of $f$ is contained in $\br$, then $$\tilde \tau |_{U_0\cap \br}: U_0 \cap \mathbb{R} \rightarrow \mathbb{R}$$
 is a local diffeomorphism satisfying $\| \tilde \tau |_{U_0\cap \br}\|_{C^2}<\frac{1}{2\delta_2}$  and $|\nabla \tilde \tau | > \delta_2$ on $U_0 \cap \br$ \color{black} .   %for all $x\in U_0\cap \br$.
\end{Prop}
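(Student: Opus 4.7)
The plan is to inherit the local-diffeomorphism property directly from Lemma \ref{NLI2}, using the extra real symmetry forced by $J\subset\br$ to kill the angular component $\tilde\theta$ along the real axis. Once $\tilde\theta|_{U_0\cap\br}$ is seen to be locally constant, the lower bound $|\nabla(\tilde\tau,\tilde\theta)(u)\cdot v|\geq \delta_2|v|$ applied to the real tangent vector $v=(1,0)$ collapses into the required estimate $|\partial_x\tilde\tau|\geq \delta_2$, and the $C^2$ bound is then inherited immediately from that of $(\tilde\tau,\tilde\theta)$.

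First I would argue that $f$ has real coefficients. The Julia set is perfect (every point of $J$ is an accumulation point of $J$; see \cite{Mi}), so $J\subset\br$ is a perfect subset of the real line. Since $f(J)\subset J\subset\br$, the real-analytic function $\Im f|_\br$ (defined away from real poles) vanishes on a set with accumulation points, hence identically on $\br$ by the identity theorem for real-analytic functions. Consequently $f$ preserves $\br$, and after choosing each Markov neighborhood $U_j$ symmetric under complex conjugation, a short argument using the injectivity of $f$ on each $U_i$ shows that every local inverse $g_{ij}$ commutes with conjugation; in particular $g^n_\xi(U_0\cap\br)\subset\br$ for every admissible sequence $\xi$ and every $n$.

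Since $f'$ is also real on $\br$ and is nonvanishing on a neighborhood of $J$ by the hyperbolicity estimate \eqref{definec_0andkappa}, the angle $\theta(z)=\arg f'(z)$ takes values in $\{0,\pi\}\bmod 2\pi$ for every real $z$ near $J$. After shrinking $U_0$ if necessary so that the (finitely many, for each fixed $n$) iterated preimages $g^{n-j}_\xi(U_0)$ and $g^{n-j}_{\hat\xi}(U_0)$ avoid the zeros of $f'$, continuity forces each $\theta\circ g^{n-j}_\xi$ to be locally constant on $U_0\cap\br$. Summing in $j$ shows that $\theta_n\circ g^n_\xi$ and $\theta_n\circ g^n_{\hat\xi}$ are both locally constant on $U_0\cap\br$, so $\tilde\theta|_{U_0\cap\br}$ has vanishing tangential derivative.

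Applying Lemma \ref{NLI2} to $u\in U_0\cap\br$ and $v=(1,0)$ now gives $|\partial_x\tilde\tau(u)|=|\nabla(\tilde\tau,\tilde\theta)(u)\cdot v|\geq \delta_2$, so $\tilde\tau|_{U_0\cap\br}$ is a local diffeomorphism into $\br$ with the required lower bound; the $C^2$ bound is inherited because restriction to a one-dimensional real submanifold does not increase the $C^2$ norm. The only subtle step is the first one, the identity-theorem argument forcing $f$ to have real coefficients from the bare hypothesis $J\subset\br$; once this is in hand, everything else falls out mechanically from Lemma \ref{NLI2}.
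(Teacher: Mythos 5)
Your proof is correct and follows essentially the same route as the paper's: both arguments hinge on observing that $f$ preserves $\br$, so that $\theta = \arg f'$ (and hence the sum $\tilde\theta$) is locally constant on $U_0\cap\br$, and then invoking the uniform gradient bound from Lemma~\ref{NLI2} to transfer the non-degeneracy entirely onto $\tilde\tau$. Your version is somewhat more detailed (you supply the identity-theorem argument for why $J\subset\br$ forces $f$ to commute with conjugation, and you explicitly track that the local inverses $g^k_\xi$ preserve $\br$), but the essential mechanism — $\tilde\theta$ has vanishing tangential derivative, so $|\nabla(\tilde\tau,\tilde\theta)\cdot(1,0)|\geq\delta_2$ collapses to $|\partial_x\tilde\tau|\geq\delta_2$ — is precisely what the paper uses.
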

\begin{proof} Since $f$ preserves $J \subset \br$,  it also preserves $\br$. Thus $\alpha := e^{2\pi i\theta}$ is locally constant on $\br$ away from the critical set and takes values only in $\lbrace 1, -1 \rbrace$.  Hence $\theta$ is locally constant on $\br \cap J$. It follows that
 $$\frac{\partial}{\partial x} \tilde \tau\quad$$ \color{black}
 is non-vanishing at some point of $J$; in the notation of Lemma \ref{NLI2}, this means that 
$$\tilde \tau : U_0 \cap \mathbb{R} \rightarrow \mathbb{R}$$
 is a local diffeomorphism with controlled $C^2$-norm  as above. \end{proof}

For a hyperbolic rational map conjugate to $f(x)=x^{\pm d}$, $\tau$
is cohomologous to a constant function on $J$, and hence does not have the NLI property.
We show that this is the only obstruction:
\begin{thm} \label{NLIholds}
For a  a hyperbolic rational function  $f:\hat{\mathbb{C}} \rightarrow \hat{\mathbb{C}}$ of degree at least $2$,
 the distortion function $\tau=\log |f'|$ on $J$ satisfies the NLI property if and only if
$f$ is not conjugate to $f(x) = x^{\pm d}$ for any $d \in \mathbb{N}$. 
\end{thm}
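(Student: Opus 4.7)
The plan is to prove the equivalence in three steps: (i) upgrade the pointwise failure of NLI into a genuine holomorphic identity via the identity theorem; (ii) extract from that identity the Livsic-type statement that $\tau$ is cohomologous to a constant on $J$; (iii) invoke Zdunik's rigidity theorem to conclude that $f$ must be conjugate to $z^{\pm d}$. The reverse implication has already been noted in the discussion before the theorem: if $f(z) = z^{\pm d}$ then $|f'| \equiv d$ on $J = S^1$, so $\tau \equiv \log d$ and $\hat\tau \equiv 0$, and conjugacy invariance of the cohomology class of $\tau$ takes care of the general case.

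First I would carry out step (i). The observation is that $\tau + i\theta$ is a local branch of the holomorphic function $\log f'$, so that on each $U_j$ the series
\[ F_\xi(x) := \sum_{n \geq 1} \bigl( \log f'(g^n_\xi x) - \log f'(g^n_\xi x_0) \bigr) \]
converges uniformly (by exponential contraction of $g^n_\xi$ together with the Lipschitz regularity of $\log f'$ away from the critical set, which is bounded away from $J$ by hyperbolicity) to a holomorphic function with $\mathrm{Re}(F_\xi) = \tau_\infty(\xi, x, x_0)$. Setting $F := F_\xi - F_{\hat\xi}$, the Cauchy--Riemann equations translate the NLI-failure condition $\nabla \hat\tau(x_1) = 0$ into $F'(x_1) = 0$. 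This holds at every $x_1 \in P_j$ by hypothesis, and since the Julia set is perfect, $P_j$ has accumulation points inside the connected open neighborhood $U_j$; the identity theorem therefore forces $F' \equiv 0$ on $U_j$, and the normalization $F(x_0) = 0$ gives $F_\xi \equiv F_{\hat\xi}$. Consequently $\tau_\infty(\xi, x, y)$ is independent of the backward sequence $\xi$.

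Next, for step (ii), I would employ the shift identity
\[ \tau_\infty(\xi, x, y) = \bigl(\tau(g_\xi x) - \tau(g_\xi y)\bigr) + \tau_\infty(\sigma \xi, g_\xi x, g_\xi y), \]
combined with the $\xi$-independence just obtained. Defining $u(x) := \tau_\infty(\xi, x, x_j)$ on each $U_j$ (reconciling basepoints across partition elements by additive constants) yields a local coboundary equation $\tau(z) = u(fz) - u(z) - c_{i,j}$ for $z \in P_i$ with $fz \in P_j$. A standard Livsic argument on the underlying subshift of finite type --- which records that along any periodic orbit of symbolic coding $(j_1, \ldots, j_n)$ one has $\tau_n(x) = -\sum_k c_{j_k, j_{k+1}}$, and exploits the $\xi$-independence to compare different codings --- then collapses the transition-dependent $c_{i,j}$ into a single constant $c$, giving the clean coboundary $\tau = c + u \circ f - u$ on $J$.

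Finally, for step (iii), I would invoke Zdunik's rigidity theorem: a rational map of degree at least two whose distortion $\log|f'|$ is cohomologous to a constant on its Julia set must be conjugate to a power map $z^{\pm d}$, a Chebyshev polynomial, or a Latt\`es example. Hyperbolicity rules out the Latt\`es case (a Latt\`es map has Julia set equal to all of $\hat{\c}$) and the Chebyshev case (the interior critical orbits of a Chebyshev polynomial land on repelling fixed points in $J$), leaving only $z^{\pm d}$. The hardest part will be step (ii): bootstrapping the local $\xi$-independence into a clean single-constant coboundary, rather than a transition-dependent jump, requires careful Livsic bookkeeping across partition elements, whereas step (i) is a direct application of the identity theorem and step (iii) is a citation.
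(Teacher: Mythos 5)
Your step (i) is sound and, modulo phrasing, is essentially what the paper does in Lemma \ref{nm}: the paper also observes that $\hat\tau$ is the real part of a holomorphic function $\log(\tilde h_0/\tilde h_1)$ and deduces from the vanishing of its derivative at every Julia point (together with perfectness of $J$) that the ratio is constant. Your step (iii) is fine: ruling out Chebyshev and Latt\`es by hyperbolicity is a legitimate route to the clean statement ``hyperbolic plus cohomologous-to-constant forces $z^{\pm d}$,'' which is what the paper cites from Zdunik.

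The problem is step (ii). The $\xi$-independence you extract yields $\tau = u_j \circ f - u_i + c_{ij}$ on each cylinder, i.e.\ $\tau$ is cohomologous to a function that is \emph{locally constant} on the shift. You then claim that ``a standard Livsic argument \ldots\ exploits the $\xi$-independence to compare different codings'' and collapses the transition-dependent $c_{ij}$ to a single constant. This is precisely the step that does not follow. In the bare symbolic setting, $\xi$-independence is a far weaker condition than ``cohomologous to a constant'': take the full two-shift and let $\tau$ take the value $a$ on cylinders starting with symbol $1$ and $b$ on those starting with $2$; then $\tau_\infty(\xi,x,y)\equiv 0$ for all backward sequences $\xi$ (so $\xi$-independence holds vacuously), yet $\tau$ is cohomologous to a constant only if $a=b$. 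So the Livsic bookkeeping alone cannot close the gap, and ``comparing different codings'' via $\xi$-independence does not produce the missing constraint on the $c_{ij}$.

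What actually forces the $c_{ij}$ to be cohomologous (as a locally constant function on the subshift) to a single constant is an additional rigidity input coming from the rational map, not from the shift. The paper gets it from Proposition \ref{NLImust}(1): working on the universal cover $\tilde\Omega$ of $\hat{\c}\setminus\overline P$ with a fixed inverse branch $\tilde g_0$ and its normalized distortion function $\tilde h_0$, they show that the functional equations propagate to \emph{every} deck-translate $\alpha\cdot\tilde g_0$, and then a power trick ($|\lambda(\beta(\alpha^k))/\lambda(\beta_0)|=|\lambda(\beta(\alpha))/\lambda(\beta_0)|^k$, while the set of fixed-point multipliers is finite) forces $|\tilde h_0|$ to be $\pi_1(\Omega)$-invariant and hence all repelling fixed points to have the same multiplier modulus. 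That finiteness-of-multipliers argument is the content your write-up is missing; without some analogue of it, you have shown $\tau$ cohomologous to a locally constant function but not to a constant, and Zdunik's theorem does not apply. It would likely be possible to import such an argument into your symbolic framework (for instance by comparing periodic backward sequences and using the finiteness of periodic-point multipliers directly), but as written the Livsic invocation is a genuine gap.

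One further small point: to apply Zdunik you need specifically that $\delta\tau$ is cohomologous to $\log d$, not merely that $\tau$ is cohomologous to \emph{some} constant; the paper pins down the constant via the measure-of-maximal-entropy argument (the equilibrium state for $-\delta\tau$ becomes the measure of maximal entropy, whose entropy is $\log d$). Your proposal should include this identification before invoking the rigidity theorem.
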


\begin{Rmk} \label{anypt} \rm In the above theorem,
  $x_1$ can be chosen to be any point of $P_j$ with at most finitely many exceptions. This is because
  of the fact that he collection of critical points for $\hat{\tau}$ is either discrete or everything, since $\hat{\tau}$ is the real part of a holomorphic function. Thus if the NLI property holds for some $x_1$ in $P_j$ it holds for almost every $x_1$ in $P_j$. 
\end{Rmk}
 
 The rest of this section is devoted to the proof
of Theorem \ref{NLIholds}.
Throughout we assume 
that
$$\text{ $f$ is not conjugate to $ x^{\pm  d}$ for any  $d \in \mathbb{N}$. }$$

\subsection{Inverse branches for $f$ and universal covers} 
%For the proof of Proposition \ref{NLIholds}, our first task is to understand a sensible universal cover of $J$. We begin by choosing sensible neighbourhoods for $J$.

Recall that the hyperbolicity of $f$ implies that
the post critical closure $\overline{P}$ of $f$ is equal to the union of the countable set $P$ of the forward orbits
of critical points with the finite set $A$ of attracting critical points for $f$. Note also that $\overline{P}$ contains at least three points; otherwise $f$ is conjugate to a map $x^{\pm d}$ for some $d\in \N$.  We write $$\Omega := \hat{\mathbb{C}} \setminus \overline{P}$$ and note that $\Omega$ is a connected Riemann surface, 
admitting a hyperbolic metric.

Let $\tilde{\Omega}$ be the universal cover of $\Omega$. We write $\tilde d$ for the hyperbolic metric on $\tilde \Omega$, and $\pi :\tilde \Omega\rightarrow \Omega$ for the covering map.
\begin{definition}
We say that a holomorphic function $\tilde g : \tilde \Omega \rightarrow \tilde \Omega$ is an inverse branch for $f$ if
\be \label{extensionidentitiy}  f(\pi (\tilde g (z)))  = \pi(z) \quad \text{
for all $z \in \tilde \Omega$. }\ee
\end{definition}
\begin{lemma} \label{surj}
For any pair $\tilde z_1, \tilde z_2\in \tilde \Omega$ with the property $f(\pi(z_1)) = \pi(z_2)$ there exists a unique inverse branch $\tilde g$ of $f$ with $\tilde g(\tilde z_2) = \tilde z_1$.  The image of any inverse branch is $\tilde \Omega \setminus \pi^{-1} (f^{-1} (\overline{P}))$. \color{black}
\end{lemma}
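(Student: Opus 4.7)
Set $W := \hat{\mathbb{C}} \setminus f^{-1}(\overline{P})$. The key preliminary is that $f|_W : W \to \Omega$ is an unramified holomorphic covering of degree $d = \deg f$: forward invariance $f(\overline{P}) \subset \overline{P}$ gives $\overline{P} \subset f^{-1}(\overline{P})$ and hence $W \subset \Omega$, while $\operatorname{Crit}(f) \subset f^{-1}(f(\operatorname{Crit})) \subset f^{-1}(\overline{P})$ guarantees $W$ contains no critical points of $f$. The set on the right of the statement is just $\tilde{\Omega} \setminus \pi^{-1}(f^{-1}(\overline{P})) = \pi^{-1}(W)$.

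For existence and uniqueness, the plan is a two-stage lift. First observe that $\pi(\tilde z_1) \in W$: indeed $f(\pi(\tilde z_1)) = \pi(\tilde z_2) \in \Omega$ means $\pi(\tilde z_1) \notin f^{-1}(\overline{P})$. Then, since $\tilde\Omega$ is simply connected and $f|_W$ is a covering, the standard lifting criterion produces a unique holomorphic $h : \tilde\Omega \to W$ satisfying $f \circ h = \pi$ and $h(\tilde z_2) = \pi(\tilde z_1)$. Next, lift $h$ through the universal cover $\pi$ to obtain a unique holomorphic $\tilde g : \tilde\Omega \to \tilde\Omega$ with $\pi \circ \tilde g = h$ and $\tilde g(\tilde z_2) = \tilde z_1$. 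By construction $f(\pi(\tilde g(z))) = f(h(z)) = \pi(z)$ as required, and uniqueness at each stage delivers uniqueness of $\tilde g$.

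For the image claim, one inclusion is automatic: $\pi \circ \tilde g = h$ is valued in $W$, so $\tilde g(\tilde\Omega) \subset \pi^{-1}(W)$. The reverse inclusion will follow once I show that $\pi^{-1}(W)$ is connected, which is equivalent to surjectivity of the inclusion-induced map $\pi_1(W) \to \pi_1(\Omega)$. Since $\Omega \setminus W = f^{-1}(\overline{P}) \setminus \overline{P}$ is countable, any smooth loop in the 2-manifold $\Omega$ can be perturbed off this measure-zero subset, yielding the desired surjectivity. Granted connectedness, for any $\tilde w \in \pi^{-1}(W)$ one picks a path $\tilde\alpha$ in $\pi^{-1}(W)$ from $\tilde z_1$ to $\tilde w$; the path $f \circ \pi \circ \tilde\alpha$ in $\Omega$ starts at $\pi(\tilde z_2)$, and its $\pi$-lift $\tilde\beta$ from $\tilde z_2$ satisfies $\tilde g \circ \tilde\beta = \tilde\alpha$ by uniqueness of lifts, whence $\tilde g(\tilde\beta(1)) = \tilde w$.

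The main (minor) obstacle I expect is the connectedness of $\pi^{-1}(W)$: the set $\Omega \setminus W$ is not in general discrete in $\Omega$, since preimages of the attracting set $A$ can accumulate on the Julia set, which lies in $\Omega$. So one cannot simply invoke naive transversality but must use a genericity/measure-zero argument to push loops off a countable set. Everything else is routine covering-space theory.
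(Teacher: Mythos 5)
Your proof is in substance the paper's: identify that $f|_W\colon W\to\Omega$ (with $W=\hat{\mathbb C}\setminus f^{-1}(\overline P)$) and $\pi\colon\pi^{-1}(W)\to W$ are coverings, and lift $\pi\colon\tilde\Omega\to\Omega$ through them using simple connectivity of $\tilde\Omega$; your two-stage lift is an unwinding of the paper's single lift through the composite covering $f\circ\pi\colon\pi^{-1}(W)\to\Omega$ (which is again a covering because $f|_W$ has finite degree). The genuine added value in your write-up is that you make explicit a point the paper leaves tacit, namely that the image claim requires $\pi^{-1}(W)$ to be connected, and you supply an argument for it. Your ``push loops off a countable closed set'' route works, but there is a more direct one: since $\Omega$ omits at least three points of $\hat{\mathbb C}$ it is hyperbolic, so $\tilde\Omega$ is conformally a disk, and $\pi^{-1}(W)$ is that disk with the countable closed set $\pi^{-1}(f^{-1}(\overline P))$ removed (the $\pi$-fibers are countable since $\pi_1(\Omega)$ is), and a disk minus a countable set is path-connected. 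One small factual slip in your final remark: the accumulation points of $f^{-1}(\overline P)\setminus\overline P$ inside $\Omega$ lie in the finite set $f^{-1}(A)\cap\Omega$, which is in the Fatou set, not the Julia set --- a single preimage $f^{-1}(A)$ is finite and cannot approach $J$; it is the full backward orbit $\bigcup_{n\ge 1} f^{-n}(A)$ that accumulates on $J$, and that set is not what is being removed here. Your underlying concern, that $\Omega\setminus W$ need not be discrete, is nevertheless correct and is the reason one cannot simply invoke transversality to finitely many points.
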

\begin{proof}
We note that the maps 
$$ f : \Omega \setminus f^{-1}(\overline P)\rightarrow \Omega $$
and 
$$ \pi: \tilde \Omega \setminus \pi^{-1}(f^{-1}(\overline P))\rightarrow \Omega \setminus f^{-1}(\overline P) $$
are covering maps. It follows that their composition is also a covering map, and the lemma follows by universality of $\pi : \tilde \Omega \rightarrow \Omega$. 
\end{proof}

If $\tilde g$ is an inverse branch for $f$, then it is a strict contraction for the hyperbolic metric $\tilde d$ and has a unique fixed point $\tilde \beta$ in $\tilde \Omega$  by \cite[Theorem 5.2]{Mi}.
Note that the projection $\pi(\tilde \beta)$ of a fixed point of an inverse branch is necessarily a fixed point of $f$.

 \subsection{Normalized distortion functions and the $\pi_1$ action}

 For inverse branches $\tilde g_0, \tilde g_1$ of $f$ with fixed points $\tilde \beta_i$ and $\beta_i := \pi(\tilde \beta_i)$, we form normalized distortion functions $\tilde h_0, \tilde h_1$ on $\tilde \Omega$ as follows:
\[ \tilde h_j(z) := \prod_{k=1}^\infty \frac{ f'(\pi( \tilde g_j^k z))}{\lambda(\beta_j)}. \] 
These are convergent, holomorphic and non-vanishing on $\tilde \Omega$ and satisfy functional equations
\be \frac{\tilde h_j(z)}{\tilde h_j(\tilde g_j(z))}=\frac{ f'(\pi( \tilde g_jz))}{\lambda(\beta_j)}.\label{functionalequation} \ee 
This is the moment at which NLI comes into play.
\begin{lemma}\label{nm} Suppose that NLI fails. Then $\tilde h_0$ satisfies both of these functional equations:
\be \label{fee}  \frac{\tilde h_0(z)}{\tilde h_0(\tilde g_j(z))}=\frac{ f'(\pi( \tilde g_jz))}{\lambda(\beta_j)}\quad \text{ for $j=0,1$}.\ee
 \end{lemma}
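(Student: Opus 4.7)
The plan is to introduce the holomorphic function
\[
\psi(z) := \log \tilde h_0(\tilde g_1 z) - \log \tilde h_0(z) + \log f'(\pi(\tilde g_1 z)) - \log \lambda(\beta_1)
\]
on the simply connected surface $\tilde\Omega$; since $\tilde h_0$, $\tilde h_0 \circ \tilde g_1$ and $f' \circ \pi \circ \tilde g_1$ are all holomorphic and nonvanishing there, single-valued branches of the logarithms exist, and the desired functional equation (\ref{fee}) for $j=1$ is exactly the statement $\psi \equiv 0$. By the identity theorem on $\tilde\Omega$ it suffices to show that $\psi$ is constant on some nonempty open subset of $\tilde\Omega$ and that $\psi(\tilde\beta_1) = 0$.

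For the local constancy I would pick a basepoint $z_0 \in \tilde\Omega$ with $y_0 := \pi(z_0)$ in the relative interior (in $J$) of some Markov piece $P_k$ and restrict $z$ to a small neighborhood of $z_0$ so that the two backward orbits
\[
y_n := \pi(\tilde g_0^n z), \qquad y_n' := \pi(\tilde g_0^n \tilde g_1 z) \qquad (n \geq 0)
\]
have locally constant Markov itineraries. These produce admissible backward symbolic sequences $\xi = (\ldots, k_2, k_1, k)$ and $\hat\xi = (\ldots, l_1, l_0, k)$ both ending at the common symbol $k$, related to the trajectories by $y_n = g^n_\xi(y_0)$ and $y_{n-1}' = g^n_{\hat\xi}(y_0)$ for $n \geq 1$. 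Expanding the definition of $\tilde h_0$ gives
\[
\psi(z) = \sum_{n=1}^{\infty} \bigl[\log f'(y_n') - \log f'(y_n)\bigr] + \log f'(y_0') - \log \lambda(\beta_1),
\]
and a direct cancellation produces the telescoping identity $\Psi_N(y_0) + \Phi_N(y_0) = \log f'(y_N')$, where $\Psi_N$ is the $N$-th partial sum of the bracketed expression above augmented by $\log f'(y_0')$ and
\[
\Phi_N(y_0) := \sum_{n=1}^{N} \bigl[\log f'(g^n_\xi y_0) - \log f'(g^n_{\hat\xi} y_0)\bigr].
\]
Letting $N \to \infty$ and using $y_N' \to \beta_0$ so that $\log f'(y_N') \to \log \lambda(\beta_0)$, one obtains
\[
\psi(z) = \log \lambda(\beta_0) - \log \lambda(\beta_1) - \Phi(y_0),
\]
where $\Phi(y_0) = \lim_{N\to\infty} \Phi_N(y_0)$ is the holomorphic function on $U_k$ whose real part coincides, up to a $y_0$-independent constant, with the NLI difference $\hat\tau$ associated to the admissible pair $(\xi, \hat\xi)$.

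The failure of NLI applied to $(\xi, \hat\xi)$ is precisely the statement that $\nabla \hat\tau \equiv 0$ on $P_k$, which by the Cauchy-Riemann equations is equivalent to $\Phi' \equiv 0$ on $P_k$. Since $\Phi'$ is holomorphic on $U_k$ and $P_k$ contains accumulation points (being a perfect subset of the Julia set), $\Phi'$ is identically zero on the connected component of $U_k$ containing $y_0$, so $\Phi$ is constant there. Consequently $\psi$ is constant on a nonempty open subset of $\tilde\Omega$, and the identity theorem extends this to global constancy. To identify the constant, I would evaluate at $z = \tilde\beta_1$: the identity $\tilde g_1\tilde\beta_1 = \tilde\beta_1$ forces $y_n = y_n'$ for all $n \geq 0$, so the sum in the formula for $\psi$ vanishes, while $\log f'(y_0') = \log f'(\beta_1) = \log \lambda(\beta_1)$ cancels the remaining term, giving $\psi(\tilde\beta_1) = 0$ and hence $\psi \equiv 0$.

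The main obstacle I anticipate is the bookkeeping underlying the rearrangement of the relevant series: the natural expansion of $\psi$ and the NLI sum $\Phi$ index the two backward trajectories off by one relative to each other, and although each of the combined differences converges, the intermediate pieces do not. Making the identification rigorous through the truncated identity $\Psi_N + \Phi_N = \log f'(y_N')$ is what produces the boundary contribution $\log \lambda(\beta_0)$, and this is precisely the term that matches $\log \lambda(\beta_1)$ at $\tilde\beta_1$ to force the vanishing of $\psi$ there.
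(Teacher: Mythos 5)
Your proof is correct. The overall strategy — exploit the failure of NLI to show, via the identity theorem, that a certain holomorphic function on $\tilde\Omega$ encoding the functional-equation defect is constant — is the same as the paper's, but the details differ in an instructive way.

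The paper works with the ratio $H(z) := \tilde h_0(z)/\tilde h_1(z)$ and argues by contradiction: if $H$ were non-constant, a point $z_0 \in \pi^{-1}(J)$ with $H'(z_0) \neq 0$ would exist, and expanding the renormalization $\hat H := H\cdot \tilde h_1(z_0)/\tilde h_0(z_0)$ in terms of $\tau_\infty$ along the two admissible sequences tracking the backward orbits $\tilde g_0^k z_0$ and $\tilde g_1^k z_0$ would exhibit NLI, a contradiction. Since $\tilde h_0$ and $\tilde h_1$ are iterated the same number of times, there is no index shift. Constancy of $H$ together with the known functional equation for $\tilde h_1$ then yields the one for $\tilde h_0$.

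You instead work directly with the defect $\psi$ and never invoke $\tilde h_1$. Your sequences $\xi, \hat\xi$ track $\tilde g_0^n z$ and $\tilde g_0^{n-1}\tilde g_1 z$, which introduces the off-by-one shift that your telescoping identity $\Psi_N + \Phi_N = \log f'(y_N')$ is designed to absorb, producing the boundary term $\log\lambda(\beta_0)$ in the limit. Evaluating at $z = \tilde\beta_1$ then pins the constant to zero. One can check that $\psi(z) = \log H(\tilde g_1 z) - \log H(z)$, so your $\psi$ is a coboundary of the paper's $\log H$; the two arguments are formally dual. What your route buys is a direct rather than contradiction argument and an explicit computational identity; the cost is the extra care around the index shift, which you yourself correctly flag as the delicate point.

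Two small remarks. First, in the step where you invoke Cauchy--Riemann, what you use is that NLI failure forces $\nabla\hat\tau$ to vanish at every point of $P_k$ (the definition quantifies over all $x_1\in P_j$), hence $\Phi'$ vanishes on a set with an accumulation point, hence identically; you have this, just be explicit that the vanishing is on all of $P_k$, not merely at $y_0$. Second, the local identification $\pi(\tilde g_0^n z) = g^n_\xi(\pi(z))$ for $z$ near $z_0$ implicitly uses the paper's lemma that preimages of interior points of Markov pieces lie in interiors, plus uniform contraction of the sections; it is worth citing that lemma to justify that the Markov itinerary is genuinely locally constant in $n$ uniformly.
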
 
 \begin{proof} It suffices to show that the ratio $H(z):=\tilde h_0(z) /\tilde h_1(z) $ is constant on $\tilde \Omega$. 
We work with the contrapositive. Assume $H(z) $ is non-constant. As $H$ is holomorphic and $J$ is a perfect set,
it follows that there exists a point, say $ z_0 $, in the cover $\pi^{-1}( J)$ where $H'( z_0) \neq 0$. It will be convenient to renormalize once more and choose 
 $$\hat H(z) := \frac{H(z) \tilde h_1(z_0)}{\tilde h_0(z_0)}.   $$
 Note that $\hat H'( z_0) \neq 0$, too. 
 
 Without loss of generality we may assume that $\pi(z_0)$ is in the interior of some element $P_i$ of the Markov partition. Now the sequence $\pi(\tilde g_0^k   z_0)$ is a sequence of pre-images  for $\pi(z_0) $ under $f$ and lands in the interior of a partition element $P_{\xi_{-k}}$. Similarly we choose $\hat \xi$ such that $\pi(\tilde g_1^k  z_0) \in P_{\hat \xi_{-k}}$. Unwrapping the definitions we have that 
 \begin{align}  \hat H(z) = & \exp (  \tau_\infty ({\xi}, \pi(z), \beta_0) - \tau_\infty ({\hat \xi}, \pi(z) ,  \beta_1 ))\\  \times & \exp( 2\pi i  \theta_\infty ({\xi}, \pi(z) , \beta_0 ) -2\pi i  \theta_\infty ({\hat \xi}, \pi(z) ,  \beta_1))    . \end{align}
 The left hand side of this equation is a holomorphic function with non-zero derivative at $z =  z_0$, and hence automatically a local diffeomorphism of a small neighbourhood of $z_0$ into a small neighbourhood of it's image $1 \in \mathbb{C}$. 
 This contradicts the assumption that NLI fails and hence  completes the proof.   \end{proof}

Fix an inverse branch $\tilde g_0$ for $f$ and write $\tilde h_0, \tilde \beta_0, \beta_0$ for the associated normalized distortion function, fixed point in $\tilde \Omega$ and projected fixed point in $\Omega$ respectively. 
The fundamental group $\pi_1(\Omega)$ has a tautological action on the universal cover $\tilde \Omega$; this extends to an action on the set of inverse branches for $f$ by choosing
 $$ (\alpha \cdot \tilde g) (z) := \alpha(\tilde g(z)).$$

%We'd like to use the functional equation \eqref{fee} to conclude that $\log |f'|$ is cohomologous to a constant function on $J$.
 %There are two obstacles to this: firstly it looks as if we have $\log |f'|$ cohomologous only to a locally constant function, coming from the different $\lambda(\beta_i)$; secondly the functions $\tilde h_i$ are defined a priori on the universal cover $\pi^{-1}(J)$, and we must confirm that they descend to $J$. 

\begin{Prop}\label{NLImust}
Suppose that $NLI$ fails. Then \begin{enumerate}
\item $|\tilde h_0|$ is invariant for the $\pi_1(\Omega)$-action on $\pi^{-1}(J)$;
\item If $|h_0|$ denotes  the projection of $|\tilde h_0|$  to $J$, then
$$   \frac{| h_0( x)|  |f'(x)|}{| h_0( f(x))| } = d^{1/\delta} \quad\text{for all $x \in J$}.$$
That is, $\tau$ is cohomologous to the constant function $ \log d^{1/\delta}$ on $J$. \end{enumerate}
\end{Prop}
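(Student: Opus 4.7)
The plan is to leverage Lemma \ref{nm} applied to arbitrary inverse branches, upgrade it to a $\pi_1(\Omega)$-cocycle statement for $\tilde h_0$, and then show that the cocycle is unimodular by counting fixed points of $f$. The proof of Lemma \ref{nm} uses only two abstract inverse branches, so it applies to any pair and gives, for every inverse branch $\tilde g$ of $f$ with fixed point $\tilde\beta_{\tilde g}$ and projection $\beta_{\tilde g} := \pi(\tilde\beta_{\tilde g})$, the functional equation
\[
\tilde h_0(\tilde g z)/\tilde h_0(z) = \lambda(\beta_{\tilde g})/f'(\pi(\tilde g z)).
\]
For $\alpha \in \pi_1(\Omega)$ the composition $\alpha \tilde g_0$ is itself an inverse branch (the identity $f \circ \pi \circ \alpha \tilde g_0 = \pi$ is immediate from $\pi\circ\alpha=\pi$) and remains a strict contraction for the hyperbolic metric, hence has a unique fixed point $\tilde\beta_\alpha$. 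A short calculation using $\alpha \tilde g_0 \tilde\beta_\alpha = \tilde\beta_\alpha$ together with $f\circ\pi\circ\tilde g_0=\pi$ shows that $\beta_\alpha := \pi(\tilde\beta_\alpha)$ is a fixed point of $f$ lying in $\Omega$.

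Dividing the functional equations for $\tilde g_0$ and $\alpha\tilde g_0$ and using $\pi\circ\alpha=\pi$ gives
\[
\tilde h_0(\alpha w)/\tilde h_0(w) = \lambda(\beta_\alpha)/\lambda(\beta_0) =: c_\alpha,
\]
first for every $w$ in the dense open image of $\tilde g_0$ in $\tilde\Omega$, and hence on all of $\tilde\Omega$ by continuity. The map $\alpha\mapsto c_\alpha$ is a homomorphism $\pi_1(\Omega)\to\mathbb{C}^*$. The crucial step is that its image is finite: since $f$ is hyperbolic, its fixed points in $\Omega$ are exactly the repelling ones (of which there are at most $d+1$), so $\beta_\alpha$ takes only finitely many values. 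A finite subgroup of $\mathbb{C}^*$ consists of roots of unity, so $|c_\alpha|=1$ for every $\alpha$. This proves (1) (even globally on $\tilde\Omega$), and lets $|\tilde h_0|$ descend to a positive continuous function $|h_0|$ on $\Omega$.

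For (2), total invariance of $J$ together with $J\cap\overline P=\emptyset$ gives $J\subset\Omega\setminus f^{-1}(\overline P)$, so Lemma \ref{surj} ensures that every $y\in J$ has the form $\pi(\tilde g_0 z)$ for some $z\in\tilde\Omega$ (with $\pi z = f(y)$). Taking moduli in the functional equation for $\tilde g_0$ at such $z$ and descending through $\pi$ yields
\[
\frac{|h_0(y)|\cdot|f'(y)|}{|h_0(f(y))|} = |\lambda(\beta_0)| \qquad\text{for every } y\in J,
\]
so $\tau=\log|f'|$ is cohomologous on $J$ to the constant $\log|\lambda(\beta_0)|$. Invariance of pressure under coboundaries, combined with $h_{\mathrm{top}}(f|_J)=\log d$, then rewrites the defining equation $P(-\delta\tau)=0$ as $0 = \log d - \delta\log|\lambda(\beta_0)|$, so $|\lambda(\beta_0)|=d^{1/\delta}$. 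The main obstacle is the character-finiteness step of the second paragraph: once one identifies $c_\alpha$ with $\lambda(\beta_\alpha)/\lambda(\beta_0)$ and recognizes that $\beta_\alpha$ must lie in the finite set of fixed points of $f$ in $\Omega$, the remainder is a clean application of the functional equation together with the observation that $\pi\circ\tilde g_0$ already sweeps out all of $\Omega\setminus f^{-1}(\overline P)\supset J$, which obviates any need to argue across multiple $\pi_1$-orbits of inverse branches.
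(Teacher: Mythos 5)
Your proof is correct and parallels the paper's overall strategy (cocycle relation from Lemma~\ref{nm}, finiteness of fixed-point multipliers, pressure identity), but with two genuine, clean improvements in execution. For the unitarity of $|\tilde h_0|$ under the $\pi_1(\Omega)$-action, the paper argues by contradiction, iterating to get $|c_{\alpha^k}|=|c_\alpha|^k\to\infty$ against the finitely many fixed-point multipliers; you instead observe that $\alpha\mapsto c_\alpha$ is a group homomorphism whose image, being a subgroup of $\mathbb{C}^*$ contained in the finite set $\lbrace \lambda(\beta)/\lambda(\beta_0)\rbrace$, must itself be a finite subgroup and hence consist of roots of unity. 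This is more structural and avoids the contradiction. For part (2), the paper selects, for each $x\in J$, a possibly different inverse branch $\tilde g$ and then asserts that the resulting fixed-point multiplier modulus $|\lambda(\beta)|$ does not depend on $x$; that constancy is not entirely automatic when $J$ is disconnected, since a priori the paper only shows $\tau+\log|h_0|-\log|h_0\circ f|$ is a continuous function into a finite set, hence locally constant. You sidestep this by noting via Lemma~\ref{surj} that the single branch $\tilde g_0$ already has image containing $\pi^{-1}(J)$, so the same multiplier $|\lambda(\beta_0)|$ appears for every $y\in J$ with no auxiliary continuity or transitivity argument. The identification $|\lambda(\beta_0)|=d^{1/\delta}$ via pressure and maximal entropy then proceeds exactly as in the paper.
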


\begin{proof}  
   Let $\alpha\in \pi_1(\Omega)$. We denote the fixed point of $\alpha \cdot \tilde g_0$ by $\tilde \beta(\alpha)$ and write $\beta(\alpha) := \pi (\tilde \beta(\alpha))$. By considering the functional equations \eqref{functionalequation} for $\tilde g_0$ and $\alpha \cdot \tilde g_0$ we deduce from Lemma \ref{nm} that
 for all $z\in \tilde \Omega$, \be\label{mid}
   \frac{\tilde h_0(\alpha \tilde g_0 (z))}{\tilde h_0(\tilde g_0(z))}=\frac{ \lambda(\beta(\alpha)) }{\lambda(\beta_0)}.\ee
We claim now that $|\tilde h_0|$ is invariant for the $\pi_1(\Omega)$ action on  $\tilde \Omega$\color{black}; if not then we may choose $\alpha \in \pi_1(\Omega)$ and $z \in \tilde \Omega \color{black} $ such that $ \left|  \frac{\tilde h_0(\alpha \tilde g_0 (z))}{\tilde h_0(\tilde g_0(z))} \right|  > 1$. 
By  Lemma \ref{surj}, $\tilde g_0$ is surjective\color{black}, and hence we can choose  $z_j\in \hat \Omega$ such that
$$\tilde g_0(z_j) = \alpha^{j-1} \tilde g_0(z) .$$ 
But then for any $k\ge 1$,
\begin{eqnarray} \notag \left|  \frac{ \lambda(\beta(\alpha^k)) }{\lambda(\beta_0)} \right| &=&  \left|  \frac{\tilde h_0(\alpha^k \tilde g_0 (z))}{\tilde h_0(\tilde g_0(z))} \right| \text{ by \eqref{mid}} \\ \notag
&=& \prod_1^k  \left|  \frac{\tilde h_0(\alpha^j \tilde g_0 (z))}{\tilde h_0(\alpha^{j-1} \tilde g_0(z))} \right|  \\
\notag &=& \prod_1^k  \left|  \frac{\tilde h_0(\alpha \tilde g_0 (z_j))}{\tilde h_0(\tilde g_0(z_j))} \right|\\
\notag &=&  \left|  \frac{ \lambda(\beta(\alpha)) }{\lambda(\beta_0)} \right|^k  \text{ by \eqref{mid}} . \end{eqnarray} 

The final terms here clearly converges to $\infty$ with $k\to \infty$, but the collection of fixed points for $f$ is finite, so there are finitely many multipliers to choose from for $\lambda (\beta(\alpha^k))$; this is contradiction, proving the claim (1).

%We now know that $ |\tilde h_0|$ (at least) is invariant for the action of the fundamental group, and write $|h_0|$ for the projection of this function to $\Omega$. 
To prove (2), choose $x \in J$ and set $y = fx$. Choose lifts $\tilde x, \tilde y$ of $x, y$ to $\pi^{-1}(J)$ and an inverse branch $\tilde g$ of $f$ such that $\tilde g(\tilde y) = \tilde x$. Let $\tilde \beta$ be the fixed point of $\tilde g$ and $\beta:=
\pi(\tilde \beta) $. 
Then
\begin{eqnarray} \notag |f'(x)| &=& |f'(\pi(\tilde g (\tilde y))|\\
\notag &=& \frac{|\lambda(\beta)||\tilde h_0(\tilde y)| }{|\tilde h_0(\tilde g\tilde y)| }\quad  \text{by \eqref{functionalequation} } \\
\notag &=&  \frac{|\lambda(\beta)||\tilde h_0(\tilde y) |}{|\tilde h_0(\tilde x)|}\\
\notag &=&  \frac{|\lambda(\beta)|| h_0( y)| }{| h_0( x)|} \quad\text{ by (1)}\\
\notag &=&  \frac{|\lambda(\beta)|| h_0( f(x))| }{| h_0( x)|}  .\end{eqnarray}

In other words, for any $x\in J$,
$$ |\lambda(\beta) |=  \frac{| h_0( x)|  |f'(x)|}{| h_0( f(x))| } .$$
%In principle, we should worry that $\beta$ depends on $x$. But there are only finitely many possible $\beta$'s
%as they are fixed points of $f$.
%Since the function $\frac{| h_0( x)|  |f'(x)|}{| h_0( f(x))| }$ is harmonic on a neighborhood of $J$, it 
%must be a constant on $J$.

Therefore
$$\log |f'(x)| =\log |h_0 (f(x))| -\log |h_0(x)| + \log |\lambda(\beta)|,$$
that is,  $\tau=\log |f'|$ is cohomologous to the constant $\log |\lambda(\beta)|$.
Since the equilibrium state $\nu$ for $-\delta \tau$ must be the measure of maximal entropy for $f$,
and the topological entropy of $f$ is given as the logarithm of the degree of $f$, it follows
that $\log |\lambda(\beta)| = \log d^{1/{\delta}}$. This finishes the proof.  \end{proof}

We now recall work of Zdunik  \cite[Corollary in section 7 and Proposition 8]{Zd}.
\begin{theorem} Suppose that $f$ is hyperbolic and  $\delta \tau$ is cohomologous to $\log d$. Then $f$ is conjugate to the map $x^{\pm d}$.  
\end{theorem}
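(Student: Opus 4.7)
The hypothesis is extremely restrictive. Exponentiating the cohomology
$$\delta\tau(x) \;=\; \log d \;+\; \phi(fx)-\phi(x)$$
and summing along a repelling periodic cycle of period $n$ produces the closed-orbit identity $|\lambda(\hat x)|^{\delta}=d^{n}$, so every repelling multiplier is rigidly determined by its period alone. My plan, following Zdunik, is to convert this rigidity into a global conjugacy to a power map in three steps.

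First, I would translate the cohomology into an identification of measures on $J$. The equilibrium state $\nu=\nu_{-\delta\tau}$ is equivalent to the $\delta$-dimensional Hausdorff measure on $J$ by Sullivan--Bowen \cite{Bo2, Su}, while the measure of maximal entropy $\mu_{\max}$ has constant Jacobian $d$. The cohomology equation says precisely that the two Jacobians differ by an exact coboundary $e^{\phi\circ f - \phi}$, so $\nu$ and $\mu_{\max}$ are mutually absolutely continuous with continuous Radon--Nikodym derivative, and by ergodicity they coincide up to a multiplicative constant. In particular $\mu_{\max}$ is equivalent to $\mathcal H^{\delta}|_J$.

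Second, I would invoke the main content of Zdunik's paper: the measure equivalence $\mu_{\max}\asymp\mathcal H^{\delta}|_J$ forces the Julia set itself to be a real-analytic curve in $\hat{\mathbb C}$. The argument passes to a linearising coordinate $\psi$ at a repelling fixed point $p$, in which $f$ conjugates to multiplication by $\lambda=f'(p)$ and $\psi^{-1}(J)$ becomes a $\lambda$-self-similar subset of $\mathbb C$. The measure coincidence, combined with a Makarov/Przytycki--Urbanski harmonic-measure analysis, translates into the statement that this self-similar set lies on an analytic curve through the origin; real-analyticity of $J$ then propagates globally by $f$-invariance and analytic continuation.

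Third, I would finish by classical classification: a hyperbolic rational map whose Julia set is an analytic Jordan curve or arc is, by results of Fatou and Eremenko--Levin, M\"obius-conjugate either to $z\mapsto z^{\pm d}$ or to a Chebyshev polynomial. The Chebyshev alternative is ruled out directly by the periodic-multiplier rigidity $|\lambda(\hat x)|^{\delta}=d^{n}$ derived at the outset, since the multipliers of Chebyshev maps are not of this form (for example $T_2'(1)=4\ne 2$), leaving only the power-map case. The main obstacle is the middle step: Zdunik's rigidity theorem (measure equivalence $\Rightarrow$ analytic Julia set) is delicate and must be imported essentially as a black box from \cite{Zd}; the first and third steps are comparatively soft.
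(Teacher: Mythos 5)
The paper gives no proof of this statement at all: it is quoted verbatim from Zdunik and labelled as a recollection of \cite[Corollary in section 7 and Proposition 8]{Zd}. So there is nothing in the paper for your argument to match against, and the honest comparison is between your sketch and Zdunik's original proof. Your outline is a fair reconstruction, but as you yourself admit, the load-bearing middle step --- measure coincidence forces $J$ to lie on a real-analytic curve --- is imported as a black box from the same source. In effect your ``proof'' is the same citation, wrapped in soft reductions before and after.

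Two remarks on the scaffolding you do supply. First, your step 1 ($\nu = \mu_{\max}$ when $\delta\tau$ is cohomologous to $\log d$) is correct, and the paper already uses exactly this observation a few lines above, inside the proof of Proposition \ref{NLImust}, to pin the cohomology constant to $\log d^{1/\delta}$ via topological entropy. Second, your step 3 takes a detour. You propose to eliminate the Chebyshev alternative via the closed-orbit identity $|\lambda(\hat x)|^{\delta} = d^{n}$, but this is unnecessary under the standing hyperbolicity hypothesis: for $\pm T_d$ the critical orbit lands on the endpoints $\pm 1 \in J = [-1,1]$, so the postcritical closure meets $J$ and the map is not hyperbolic. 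Likewise the Latt\`es examples in Zdunik's exceptional list (which your invocation of Fatou/Eremenko--Levin implicitly sidesteps by restricting to curve Julia sets) are excluded immediately because their Julia set is all of $\hat{\mathbb{C}}$. Your multiplier computation is not wrong, but hyperbolicity alone already finishes the classification once Zdunik's rigidity gives an exceptional map.
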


\begin{proof}[Proof of Theorem \ref{NLIholds}]  Suppose $f$ is a hyperbolic rational function
with degree at least $2$ and not conjugate to $z^{\pm d}$. If NLI fails then Proposition \ref{NLImust} shows that $\delta\tau $ is cohomologous (on $J$) to $\log d$; however this is impossible by Zdunik's result. 
\end{proof}

\section{Non-Concentration and doubling for hyperbolic Julia sets} 
\label{NCPsection}
\subsection{Non-concentration} As before, let $f$ be a hyperbolic rational map of degree $d\ge 2$, and keep the notation from Section \ref{sub_rat}.
 In this section we shall address non-concentration properties for hyperbolic Julia sets. 
We will also recall that the associated measures have the doubling property on each cylinder.

\begin{Nota}\rm  We write cylinders of length $r\in\mathbb{N}$ as 
\be \cyl([i_1, \ldots, i_r]) := \lbrace x \in J : f^{j-1}x \in P_{i_j} \mbox{ for } 1\le j \le r  \rbrace. \ee \end{Nota}
Note that we can regard cylinders either as subsets of $J$ or as subsets of $P_{i_1}\subset U_{i_1} \subset U$.

Let $J$ denote the Julia set of $f$. \color{black} 
\begin{dfn}[The Non-Concentration Property]\label{NCP1} \rm
The Julia set $J$ %(or more precisely the decomposition $P_i$) 
has the Non-Concentration Property (NCP) if, for each cylinder $C$ of $J$,
 there exists $0<\delta_1 <1$ such that, for all  $x \in C$, all  $w\in \c$ of unit length, and all $\epsilon \in (0, 1)$ \color{black} , 
\be B_{\epsilon}(x) \cap \lbrace y \in C:  |\langle y - x, w \rangle | > \delta_1 \epsilon  \rbrace \ne \emptyset  \ee
 where $\langle a+bi, c+di \rangle=ac+bd $ for $a,b,c,d\in \br$.
\end{dfn}
%One can think of this as a version of the triadic intersection property used in \cite{Na}.  

It is clear that the NCP must fail  whenever $J$ is contained in a circle, 
in which case we refer to Theorem \ref{ncp}(2) for the required \color{black} modification.

 The NCP is a consequence of quasi-self-similarity of the Julia set: 
 in some precise sense $J$ looks the same at every length scale. We will describe the required aspects of this notion as they appear. 
\color{black} 
 % \color{red}  The NCP is a consequence of quasi-self-similarity of the Julia set of a hyperbolic rational map;  that is, 
 %there exists $K>1$ such that for all small $\e \in (0, 1)$, there exists a $K$-quasi isometric map
 %from $\phi_{1/\e}(B(x,\e)\cap J)$ to $ J$, where $\phi_{1/\e}$ is the similarity map $x\mapsto  ({1}/{\e}) x$.  - I THINK WE SHOULD EITHER GIVE QSS A REAL DISCUSSION OR WAVE OUR HANDS HERE; I'm worried that this semi-techincal definition will confuse those unfamilar with the notion and bore those familiar with it...? -DAW\color{black}
 
 %there exists $r>0$ such that
 %for any $x\in J$ and for any small $0<\e<r$, we can find $y\in J$ and a conformal map
 %$\psi: B_r(y) \to B_\e(x)$ which maps $B_r(y)\cap J$ to $B_\e(x)\cap J$
 % such that the composition of $\psi$ and the rescaling map $B_\e(x)\to B_r(x)$
   %is a  quasi-isometric map $B_r(y)\to B_r(x)$.\color{black}
 
 %the fact that the Julia set ``looks the same'', up to a bounded multiplicative
 %constant, at every length scale.

We start by choosing some constants to be used for the rest of the section. For each partition element $P_j \subset U_j$, choose a 
 neighbourhood $P_j \subset \Omega_j \subset U_j$ with $\overline{\Omega_j} \subset U_j$.  Note that we can find positive integers $K_1, K_2, K_3, K_4, L$ such that 
\begin{enumerate} 
\item for any $x \in J$  and any $\epsilon \in(0, 1) $, there exists $k\in\mathbb{N}$ with 
\[ \frac{1}{K_1\epsilon} \leq |(f^k)'(x) | \leq \frac{1}{\epsilon};\]
\item for any $j$ and any $x \in P_j$, we have $B_{3/K_2}(x) \subset \Omega_j$;
\item for any $j$ and any $x \in \Omega_j$, we have $B_{3/K_2}(x) \subset U_j$;
\item for any admissible sequence $I = (i_1, \ldots , i_r)$ and associated map $g_I: U_{i_r} \rightarrow U_{i_1} $, we have 
\[ \frac{|g_I'(x)|}{|g_I'(y)|} \leq K_3 \quad\text{ for all $x, y \in \Omega_{i_r}$;}\]

\item for any $x \in \Omega_j$, any $r \in (0, 1/K_2)$, and any univalent map $T: U_j \rightarrow \mathbb{C}$, we have that 
\[ B_{|T'(x)|r/K_4}(T(x)) \subset  T(B_{r}(x)) \subset B_{|T'(x)| r K_4}(T(x)); \] 
\item every cylinder of length $L$ has diameter less than $\frac{1}{2K_1K_2K_3K_4^2}$.
\end{enumerate}
The statement (1) follows from the hyperbolicity of $f$, and the statements (4) and (5) follow from Koebe's Distortion theorem (cf. \cite{CG}).
Other statements are clear.

 \begin{thm}[NCP]\label{ncp}
\begin{enumerate}
\item If $J$ is not contained in a circle, then the NCP holds for $J$. %, and indeed for any subcylinder of $J$. 
\item If $J$ is contained in $\br$, then for each cylinder $C$ of $J$,
 there exists $0<\delta_1 <1$ such that, for all  $x \in C$, and all $\epsilon > 0$, 
 $$B_\e(x)\cap \{y\in C:   |y-x| >\delta_1 \e \}\ne \emptyset .$$

\end{enumerate}

 \end{thm}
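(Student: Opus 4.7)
\textbf{The plan} is to argue by contradiction in both parts using the same blow-up machinery: property (1) of Section~\ref{NCPsection} to pick an iterate of $f$ expanding by $\asymp 1/\epsilon$, properties (4) and (5) to convert that expansion into a Koebe bounded-distortion pullback, and pigeonhole over the finite collection of level-$L$ cylinders to extract a compactness limit. Suppose NCP(1) fails on the cylinder $C$; then there are sequences $x_n \in C$, $w_n \in S^1$, $\epsilon_n \in (0,1)$ and $\delta_n \to 0$ with $B_{\epsilon_n}(x_n) \cap C \subset \{y : |\langle y - x_n, w_n\rangle| \leq \delta_n \epsilon_n\}$. I pick $k_n$ with $|(f^{k_n})'(x_n)| \asymp 1/\epsilon_n$, set $y_n := f^{k_n}(x_n)$, and take $g_n$ to be the local inverse branch of $f^{k_n}$ at $y_n$ whose admissible multi-index begins with that of $C$; thus $g_n(y_n) = x_n$ and $g_n(C^*_n) \subset C$ for every level-$L$ cylinder $C^*_n \ni y_n$.

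Properties (5) and (6) then let me fix $\rho$ independent of $n$ with $C^*_n \subset B_\rho(y_n)$ and $g_n(B_\rho(y_n)) \subset B_{\epsilon_n}(x_n)$, so that $g_n(C^*_n)$ sits inside the narrow strip above. The uniform bounded distortion of $g_n$ on $B_\rho(y_n)$ given by property (4) pulls this back to the statement that $C^*_n$ itself lies in a strip of width $O(\delta_n)$ about $y_n$ in the rotated direction $\tilde w_n^\perp$, with $\tilde w_n := e^{-i\arg(g_n'(y_n))}w_n$. Pigeonhole over the finitely many level-$L$ cylinders gives $C^*_n = C^*$ constant along a subsequence, and compactness gives $y_n \to y^* \in C^*$ and $\tilde w_n \to \tilde w^*$. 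Since $\delta_n \to 0$, passing to the limit yields $C^* \subset \{z : \langle z - y^*, \tilde w^*\rangle = 0\}$, i.e.\ the cylinder $C^*$ lies on a straight line. Because every cylinder contains a nonempty relatively open subset of the perfect set $J$, this means $J$ has a nonempty relatively open subset contained in an analytic arc; the classical rigidity theorem of Fatou then forces $J$ into a generalized circle of $\hat{\c}$, contradicting the hypothesis of (1).

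Part (2) succumbs to the same blow-up with ``strip'' replaced by ``ball'': failure of (2) supplies $x_n \in C$, $\epsilon_n > 0$, $\delta_n \to 0$ with $B_{\epsilon_n}(x_n) \cap C \subset B_{\delta_n \epsilon_n}(x_n)$, and Koebe pullback forces the level-$L$ cylinder $C^*_n \ni y_n$ into $B_{K\delta_n}(y_n)$ for a uniform $K$; pigeonhole and a limit then collapse a fixed cylinder $C^*$ to a single point, contradicting its positive diameter (inherited from the perfectness of $J$). The main obstacle in part (1) is the globalization step: passing from ``one cylinder lies on a line'' to ``$J$ lies on a circle'' requires the nontrivial Fatou rigidity theorem for Julia sets meeting an analytic arc in a relatively open set, whereas the blow-up and pigeonhole manipulations are standard consequences of Sullivan's quasi-self-similarity. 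Part (2) is strictly easier since no analytic rigidity is needed --- positive diameter alone produces the contradiction.
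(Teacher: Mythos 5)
Your blow-up, renormalization, pigeonhole and compactness argument is essentially the paper's proof, and your reduction of part (2) to ``a fixed cylinder collapses to a point'' is the same contradiction the paper reaches. Two issues in part (1) need fixing, one minor and one that leaves an actual gap.

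The minor one: the assertion that $C^*$ lies on a \emph{straight line} $\{z : \langle z - y^*, \tilde w^*\rangle = 0\}$ overstates what bounded distortion gives. Property (4) and Koebe's theorem control the modulus (and the variation of the argument) of $g_n'$, but they do not make $g_n$ affine. The preimage under $g_n$ of a strip of width $\delta_n\epsilon_n$ is pinched between two analytic arcs of $O(1)$ curvature that are $O(\delta_n)$ apart, so in the limit $C^*$ sits on an analytic curve --- the paper's $g_\infty^{-1}(L_\infty)$ after the normalization $\phi_n(y) = (y-x_n)/\epsilon_n$ --- which is generically not a line. Since the rigidity step downstream applies to smooth curves, this slip does not break the argument, but the claim is wrong as written.

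The substantive one is the invocation of a ``classical rigidity theorem of Fatou.'' Fatou's theorem concerns Julia sets which \emph{are} smooth arcs or Jordan curves; for a hyperbolic $f$ the Julia set may well be a totally disconnected Cantor set merely \emph{contained in} a smooth curve, a strictly weaker hypothesis that Fatou's result does not cover. The statement actually needed --- a rational map of degree $\ge 2$ whose Julia set lies in a $C^1$ curve is either a flexible Latt\`es example (impossible here since $f$ is hyperbolic) or has Julia set contained in a circle --- is the 2011 theorem of Eremenko and Van Strien, which is precisely what the paper cites at this point. Without this (or an equivalent) input the contradiction in part (1) is not reached, so the reference must be replaced.
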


\begin{proof}  We address part (1) first. \color{black} 
Suppose that $J$ is not contained in a circle.
 Fix now some cylinder $C = C([i_1 , \ldots , i_{r}])$ of length $r$ and  we assert that the outcome of NCP holds for $C$.  If not, we may choose $x_n \in C, \epsilon_n \rightarrow 0$ and  $w_n \in \c$ of modulus one with which the NCP property fails , that is
 \[ \frac{1}{\epsilon_n} \langle  y_n, - x_n, w_n \rangle \rightarrow 0\]
 for any sequence $y_n \in B_{\epsilon_n}(x_n)\cap C$. 
 \color{black} 
  Without loss of generality, we assume that $\epsilon_n \in (0, 1)$. Choose $k_n\in \N$ such that $\frac{1}{K_1\epsilon_n} \leq |(f^{k_n})'(x_n) | \leq \frac{1}{\epsilon_n}$. We may also assume that $k_n > r + 1$. 
 
 Let $C(I_n) = C([i^n_1, \ldots , i^n_{k_n}])$ \color{black} be a length $k_n$-subcylinder of $C$ containing $x_n$, and set  $y_n = f^{k_n-1}(x_n)\in P_{i_{k_n}}$. 
% Let $C'_n = C([i_1, \ldots , i_{k_n}])$ be a length $k_n$ subcylinder of $C$ containing $x_n$. Write  $I_n = [i_1, \ldots , i_{k_n}]$. Let $y_n = f^{k_n-1}(x_n)$ and note that $y_n \in P_{i_{k_n}}$. 
 We renormalize via the map
  \[ \phi_n: B_{\epsilon_n}(x_n) \rightarrow B_1(0),\quad  y\mapsto \frac{y - x_n}{\epsilon_n} .\]
 Now choose a subcylinder $D_n$ of length $L$ in $P_{i_{k_n}}$ containing $y_n$. We then have that $g_{I_n}$ maps  $y_n\in  D_n \subset B_{1/(K_1K_2K_4)}(y_n)$ to $x_n \in C \cap B_{\epsilon_n /K_2}(x_n)$ which then maps via $\phi_n$ to $B_1(0)$. 
 
 We now consider the composition 
 $$\phi_n\circ g_{I_n}:  B_{1/(K_1K_2K_4)}(y_n) \to B_1(0).$$
 Note that the derivative of $\phi_n\circ g_{I_n}$ at $y_n$ is bounded both above and below independent of $n$. Now, by passing to a subsequence,
  we may assume that $D_n$ is constant independent of $n$, that $y_n $ converges to some $ y_\infty\in D_n$, that $w_n$ converges
  to some $w_\infty \in S^1$ and that $\phi_n\circ g_{I_n}$ converges locally uniformly to a non-constant univalent function 
  $$g_\infty: B_{1/(2K_1K_2K_4)}(y_\infty) \to B_1(0).$$ 
  
   We see that the non-empty open subset $D_n \cap B_{1/(2K_1K_2K_4)}(y_\infty)$  is contained in the smooth curve $g_\infty^{-1}(L_\infty \cap \text{Image}(g_\infty))$ 
   where $L_\infty$ is the perpendicular line to $w_\infty$. But then for some $N\ge 1$,  $f^N(D_n \cap B_{1/(2K_1K_2K_4)}(y_\infty))$  contains $J$ \cite[Corollary 14.2]{Mi}. Therefore $J$ is contained in a smooth curve. Work of Eremenko-Von Strien \cite{EVS} implies that $J$ must then be contained in a circle.

Now address part (2). \color{black} Now suppose $J$ is contained in the real line. Then the above argument shows that
the failure of (2) implies that $x$ is an isolated point in $J$, which is a contradiction as $J$ is a perfect set. 
  \end{proof}

\subsection{Doubling of the conformal measure on cylinders}
Recall the equilibrium measure $\nu$ on $J$ and its restriction $\nu_j$ to $P_j$ for each $j$ from \eqref{nui}.
The doubling property for $\nu$ itself is a straightforward consequence of the fact that for all small $\e>0$, $\nu(B_\e(x))$ is equivalent to $\e^\delta$ up to bounded constants \cite{Su}.
It is however important for later arguments that the measures $\nu_j$ also have the doubling property.
\begin{prop}[Doubling]\label{dob} For each $j$, the measure
$\nu_j$ has the doubling property. That is, there exists $C>1$ (called the doubling constant) such that
for any  $x \in  P_j$ and any $\epsilon > 0$:
\begin{equation}  \nu_j(B_{2\epsilon}(x))\leq C \cdot \nu_j(B_\epsilon(x)) .\end{equation}
\end{prop}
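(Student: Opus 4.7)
Sullivan's theorem gives Ahlfors regularity of $\nu$ on $J$, i.e.\ $\nu(B_\epsilon(x)) \asymp \epsilon^\delta$ for $x \in J$ and $\epsilon$ in a fixed small range. The upper half of the doubling inequality is then immediate from $\nu_j(B_{2\epsilon}(x)) \leq \nu(B_{2\epsilon}(x)) \leq C(2\epsilon)^\delta$. The whole content is the matching lower bound $\nu_j(B_\epsilon(x)) \gtrsim \epsilon^\delta$, which is not automatic since $x$ may sit near $\partial P_j$, in which case most of $B_\epsilon(x) \cap J$ could lie in neighbouring partition elements.

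To supply the lower bound I would produce, by pulling back via an inverse branch, a sub-cylinder of $P_j$ of diameter less than $\epsilon$ and $\nu$-mass $\gtrsim \epsilon^\delta$ containing $x$. Given $x \in P_j$ and a small $\epsilon \in (0,1)$, item (1) of the list in Section~\ref{NCPsection} supplies $n$ with $|(f^n)'(x)| \in [1/(K_1\epsilon), 1/\epsilon]$. Let $y := f^n(x) \in P_{j_{n+1}}$, let $g : U_{j_{n+1}} \to U_j$ be the inverse branch of $f^n$ tracking the forward orbit of $x$, so $g(y) = x$ and $|g'(y)| = 1/|(f^n)'(x)| \in [\epsilon, K_1\epsilon]$, and let $D$ be the length-$L$ cylinder of $J$ inside $P_{j_{n+1}}$ determined by the forward orbit of $y$. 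By item (6), $\diam D < 1/(2 K_1 K_2 K_3 K_4^2)$; by items (4), (5) (Koebe distortion), $g$ has bounded distortion $K_3$ on $D$ and
\[ \diam(g(D)) \leq |g'(y)|\,K_4\,\diam(D) < \epsilon.\]
Hence $g(D) \subset B_\epsilon(x)$, and as a cylinder beginning with the symbol $j$ one has $g(D) \subset P_j$. The $\delta$-conformality of $\nu$ (equivalently, $\nu = \nu_{-\delta\tau}$ being the equilibrium state for $-\delta\tau$) gives
\[ \nu(g(D)) = \int_D |g'|^\delta\, d\nu \asymp \epsilon^\delta \, \nu(D),\]
and $\nu(D)$ is bounded below uniformly because there are only finitely many length-$L$ cylinders in $J$ and each has positive $\nu$-mass. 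This yields $\nu_j(B_\epsilon(x)) \geq \nu(g(D)) \gtrsim \epsilon^\delta$, as required.

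For $\epsilon$ bounded below by a fixed constant the inequality is trivial, since $\nu_j(B_{2\epsilon}(x)) \leq \nu_j(P_j) < \infty$ while $\nu_j(B_\epsilon(x))$ is uniformly bounded below by the small-$\epsilon$ argument applied at the threshold. I do not expect any serious obstacle beyond carefully tracking constants through Koebe's distortion theorem; the key conceptual point that sidesteps the boundary-of-$P_j$ issue is that $g(D)$, being built from an inverse branch whose first symbol is $j$, is automatically a sub-cylinder of $P_j$ regardless of how $x$ is situated relative to $\partial P_j$. This is exactly the feature that a naive appeal to the doubling of $\nu$ on $J$ misses.
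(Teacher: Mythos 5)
Your proof is correct, and it shares with the paper the key idea: produce a sub-cylinder of $P_j$ contained in $B_\epsilon(x)$ of $\nu$-mass comparable to $\epsilon^\delta$, using the hyperbolicity estimate to pick the right number of iterates and Koebe distortion to control geometry under the inverse branch. The routes diverge in how the estimate is closed. You establish Ahlfors regularity of $\nu_j$ directly: the lower bound $\nu_j(B_\epsilon(x)) \gtrsim \epsilon^\delta$ comes from your cylinder $g(D)$, and the upper bound $\nu_j(B_{2\epsilon}(x)) \lesssim \epsilon^\delta$ is Sullivan's regularity of $\nu$ restricted to $P_j$. The paper instead proves the density estimate
\[ \frac{\nu(B_\epsilon(x) \cap P_j)}{\nu(B_\epsilon(x))} > c, \]
working entirely by pushing forward under $f^k$ and bounding (via a pigeonhole/injectivity argument, their ``Claim 3'') the number of length-$k$ cylinders meeting $B_\epsilon(x)$, and then transfers doubling from $\nu$ to $\nu_j$. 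The paper's route is slightly more self-contained: it never needs to quote the two-sided Ahlfors bound $\nu(B_\epsilon(x)) \asymp \epsilon^\delta$ inside the proof, only the doubling of $\nu$ (and in fact the density estimate is a strictly stronger and reusable statement). Your route is shorter where it can import Ahlfors regularity, and it avoids the count of overlapping cylinders. One cosmetic correction: the equality $\nu(g(D)) = \int_D |g'|^\delta\, d\nu$ should really be $\asymp$, since $\nu$ is the equilibrium state rather than the genuine conformal measure (they differ by the Perron--Frobenius eigenfunction, which is bounded above and below), but this does not affect the conclusion since only bounds up to multiplicative constants are needed.
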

It does not seem clear a priori that the doubling property of $\nu$ descends to  the restrictions $\nu_j$. 
 For this reason we provide an argument which, again, is based on quasi self similarity of the Julia set. Proposition \ref{dob} follows from the following:
\begin{proposition}\label{45} \color{black} 
There is a constant $c > 0$ with the following property. For any $j$, any $x\in P_j$ and any $\epsilon > 0$, we have 
\[ \frac{\nu(B_\epsilon(x) \cap P_j)}{\nu(B_\epsilon(x))} > c\]
In particular, each $\nu_j$ inherits the doubling property from $\nu$ itself. 
\end{proposition}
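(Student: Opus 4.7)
The plan is to produce inside $B_\epsilon(x)\cap P_j$ a Markov cylinder of $\nu$-mass comparable to $\epsilon^\delta$, and then to compare against $\nu(B_\epsilon(x))\lesssim \epsilon^\delta$, which comes from Ahlfors $\delta$-regularity of the equilibrium state $\nu$ (a standard consequence of the Gibbs property for hyperbolic Julia sets, cf. \cite{Bo2, Su}). Once the ratio $\nu(B_\epsilon(x)\cap P_j)/\nu(B_\epsilon(x))\ge c>0$ is established uniformly, the ``in particular'' doubling assertion for $\nu_j$ follows immediately from doubling of $\nu$ itself via
\[
\nu_j(B_{2\epsilon}(x))\le \nu(B_{2\epsilon}(x))\le C\,\nu(B_\epsilon(x))\le (C/c)\,\nu(B_\epsilon(x)\cap P_j) = (C/c)\,\nu_j(B_\epsilon(x)).
\]

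I would first reduce to small $\epsilon$: for $\epsilon$ bounded below by some fixed $\epsilon_0$, the ratio is controlled by packing a fixed sub-cylinder of $P_j$ near any point of $P_j$ into $B_\epsilon(x)$, using compactness of $P_j$ together with the fact that $\nu$ is finite and bounded below on partition elements. For small $\epsilon$, I would invoke the constant $K_1$ from Section \ref{NCPsection} to select $k$ with $1/(K_1\epsilon)\le |(f^k)'(x)|\le 1/\epsilon$, and build the admissible sequence $I=(i_{k+1},i_k,\ldots,i_1)$ (in the paper's $g_I$ indexing) with $i_{k+1}=j$ by reading the Markov addresses of $x, f(x),\ldots, f^k(x)$, so that $x\in g_I(P_{i_1})\subset P_j$. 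The chain rule gives $|g_I'(f^k(x))| = 1/|(f^k)'(x)|\asymp \epsilon$, and Koebe distortion on $\Omega_{i_1}$ (with constant $K_3$) yields $\mathrm{diam}(g_I(P_{i_1}))\asymp \epsilon$; after inflating $k$ by a uniformly bounded integer one obtains $g_I(P_{i_1})\subset B_\epsilon(x)\cap P_j$.

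Finally, the Gibbs/conformal transformation law for $\nu=\nu_{-\delta\tau}$ combined with Koebe distortion gives
\[
\nu(g_I(P_{i_1}))\asymp |g_I'|^\delta\cdot \nu(P_{i_1})\gtrsim \epsilon^\delta,
\]
since $\min_l \nu(P_l)>0$ over the finitely many partition elements. Together with $\nu(B_\epsilon(x))\lesssim \epsilon^\delta$ this closes the argument. The only real subtlety I foresee is when $x$ lies on the common boundary of several $P_l$; there may then be multiple admissible sequences $I$ starting with $i_{k+1}=j$, but any such choice works, so this presents no genuine difficulty. Everything else is a standard combination of hyperbolicity, the Gibbs property of $\nu$, and Koebe distortion.
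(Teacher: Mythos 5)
Your proposal is correct and shares the paper's core idea — choose a dynamical rescaling depth $k$ with $|(f^k)'(x)|\asymp\epsilon^{-1}$ and fit a full-length cylinder of mass $\gtrsim\epsilon^\delta$ inside $B_\epsilon(x)\cap P_j$ via Koebe distortion — but it closes the argument by a slightly different route. You invoke the Ahlfors $\delta$-regularity upper bound $\nu(B_\epsilon(x))\lesssim\epsilon^\delta$ (which the paper does state as known, citing Sullivan, as the source of doubling for $\nu$ itself), whereas the paper's proof avoids appealing to this scaling law directly: it instead proves a bounded-multiplicity claim (at most $k_0$ cylinders of the chosen depth $k$ can meet $B_\epsilon(x)$, using injectivity of $f^k$ on $B_\epsilon(x)$), then pushes everything forward by $f^k$ and compares measures at scale one using conformality of $\nu$ and Koebe. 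Your version is shorter and perfectly rigorous given the cited regularity; the paper's version is marginally more self-contained, replacing the quantitative upper bound on $\nu(B_\epsilon(x))$ with a purely combinatorial/pigeonhole count. The handling of the "not too small" depth (your "inflating $k$ by a uniformly bounded integer") matches the paper's use of length-$(k+L)$ subcylinders, and your remark about boundary ambiguity in the Markov coding is the correct thing to flag and is indeed harmless.
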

 \color{black}
\begin{proof} We retain our choices of constants $K_1, K_2, K_3, K_4$, and $L$. 
Fix $1\le j\le k_0$, $ x \in P_j$ and $\epsilon > 0$. We choose $k = k(x, \epsilon)$ such that 
\[ \frac{1}{2K_1K_2K_3K_4\epsilon} \leq |(f^k)'(x) | \leq \frac{1}{2K_2K_3K_4\epsilon}.\]
We claim that this $k$ satisfies a Goldilocks property: it is neither too large nor too small.

\noindent \textbf{Claim 1:} $k$ is not too large in the sense that 
$$B_\epsilon(x)\subset g_I( \Omega_{i_{k + 1}})$$ for every length $k+1$ cylinder  $C(I) = C([i_1,\ldots, i_{k+1}])$ meeting  $B_\epsilon(x)$. In particular $f^k$ is injective on $B_\epsilon(x)$. 

This follows from the following calculation: suppose $ y \in C(I) \cap B_\epsilon(x)$, and let $\hat y = f^k y \in P_{i_{k+1}}$, then
\begin{eqnarray*}
g_{I}(\Omega_{k+1}) &\supset& g_I(B_{3/K_2}(\hat y))\\
&\supset& B_{|g'_I(\hat y)|/(K_2K_4)}(y)\\
&\supset& B_{1/(K_2K_4|(f^k)'( y)|)}(y)\\
&\supset& B_{1/(K_2K_3K_4|(f^k)'( x)|)}(y)\\
&\supset& B_{2\e }(y)\\
&\supset& B_{\e }(x).
\end{eqnarray*}
The fourth inclusion here requires some comment: we want to use condition (4), that is 
$$|g_I'(f^k y) / g_I'(f^k x) | < K_3;$$
 this only makes sense if $ C(I) $ contains $ x$. To get around this we first run the argument for $y' = x$ contained in some cylinder $C(J)$ and so obtain $B_\epsilon(x) \subset g_J(\Omega_{j_{k+1}})$. For any $y \in B_\epsilon(x)$ we then have that 
 $$|(f^k)'(x) / (f^k)'(y)| = |g_J'(f^k y) / g_J'(f^k x) | < K_3,$$ as required.

\noindent \textbf{Claim 2:} $k$ is not too small in that sense that any subcylinder $C(I)$ of $P_j$ of length $k + L$ that contains $x$ is contained in $B_\epsilon(x)$.
 
 For the proof, suppose $I = (i_1, \ldots, i_{k + L})$. Note that $f^k(C(I))$ is a length $L$ cylinder, so has small diameter by the choice of $L$, and the statement follows by applying condition (5) to $g_{I'}$ where $I' = (i_1, \ldots, i_{k+1}) \color{black} $. 
\vspace{2mm}

\noindent \textbf{Claim 3:} If $r$ is the number of length $k$ cylinders meeting $B_\epsilon(x)$, then $r \leq k_0$, the number of elements in our Markov partition \color{black} 

Let $I_1 ,\ldots , I_r$ be the length $k$ cylinders meeting $B_\epsilon(x)$. 
The claim follows from the pigeonhole principle, injectivity of $f^k$ on $B_\epsilon(x)$, and injectivity of $f^k$ on each set $g_{I_j}(\Omega_{{I_j}_{k+1}})$. If $ r > {k_0} \color{black}$ then we may assume without loss of generality that $f^{k-1}(C(I_1)) = f^{k-1}(C(I_2))$ \color{black}both give the same partition element. Choose $y\in B_\epsilon (x)$ to be an interior point for $C(I_1)$. Then $ f^{k-1} (y)  \in f^{k-1}(C(I_1)) \color{black}$. But it is then an easy exercise in injectivity to see that  $f^{k-1}(y) \notin f^{k-1}(C(I_2))$ \color{black}. This gives the required contradiction.

We are now ready to prove our Proposition \color{black}. Let $C(I)$ be a length $k + L$ subcylinder of $P_j$ contained in $B_\epsilon(x)$. Then 
\begin{multline*} \frac{\nu(B_\epsilon(x) \cap P_j)}{\nu(B_\epsilon(x))}\geq  \frac{\nu(C(I)) }{\nu(B_{\e} (x))} 
\gg  \frac{ \nu(f^k C(I))}{\nu( f^k B_{\e}(x)) }\\
\ge \frac{ \nu(f^k C(I))}{\sum_1^r \nu( f^k I_j) } \geq \frac{ \min \{\nu(C): C\text{ a cylinder of length L}\} }{k_0\cdot  \max_i \nu(P_i) } \end{multline*}
where the second inequality here follows by the $f$-conformality of the measure $\nu$ and uniform bounds 
on $\frac{|(f^k)'(z)|}{|(f^k)'(w)|}$ for $z, w \in B_\epsilon(x)$ given by the Koebe distortion theorem. 

\end{proof}

\section{Spectral bounds for transfer operators} 
\label{Dolgopyatsection}
In the entire section, we assume that $f$ is a hyperbolic rational function of degree at least $2$ and that $f$ is not conjugate to $x^{\pm d}$ for
 any \color{black} $d\in \N$.
Our goal in this section is to prove Theorem \ref{decayestimate}.

Spectral bounds for the transfer operators
\begin{eqnarray*}  ( \mathcal{L}_{s, \ell }h) (x) &:=& \sum_{i: M_{ij} = 1 } e^{-s\tau(g_{ij}x)}\chi_\ell(\alpha(g_{ij}x))h(g_{ij}x), \quad s\in \c,\ell\in \z  \end{eqnarray*}
on $C^1(U)$ will follow from the oscillatory nature of the summand terms. The role of the NLI condition is to ensure that the summand terms are rapidly oscillating (relative to one another). This ensures that we will see some cancellation in the summation for $( \mathcal{L}_{s, \ell}h) (x) $ at least for many $x \in U$. We then need to ensure that some of this cancellation happens on the Julia set; that follows from the NCP, which says that the Julia set is 'too big' to avoid the cancellation. Finally this cancellation on the Julia set must be used to prove spectral bounds for $\hat{\mathcal{L}}$; that is the topic of this section. 
\color{black}

%There is one remaining worry, however; the interesting dynamics of $T$ happen on the Julia set $J$, which is a very sparse subset of $U$. Might it happen
% that all the cancellation we worked so hard to achieve happens a long way away from $J$? If so, then that cancellation would somehow not matter, and in particular would not lead to spectral bounds for $\mathcal L_{s, \chi}$ or to the counting estimates we want. 

%The worry expressed above is not an empty one; the difficulty described does happen in special cases, and in those special cases our equidistribution results must fail. When $J$ is not contained in a circle, however, the NCP shows that $J$ is in some sense too large for this difficulty to occur. 

Our task for this section is to make the heuristic above precise. %We follow Dolgopyat's argument \cite{Do} and also recommend the treatment of Naud \cite{Na} to the interested reader. 
%Our task for this section, though, is to show that for most $T$ the Julia set $J$ is too big for this kind of difficulty to occur. The technical statement we require is as follows.

\subsection{Setup for the construction of Dolgopyat operators} \label{Section3}
The aim of this subsection is to establish notation and to recall some standard results that will be needed to prove Theorem \ref{decayestimate}. We retain the notation of the previous sections. 

By the NLI property of $\tau$  shown in Theorem \ref{NLIholds} and Remark \ref{anypt}, we may choose constants as in Lemma \ref{NLI2}; in other words we fix a partition element $P_j$, points $x_0, x_1 \in P_j$, admissible sequences $\xi, \hat \xi$,  a neighbourhood $U_0$ of $x_1$, and $\delta_2 > 0$ satisfying the conditions of that Lemma.  As a notational convenience we shall assume that the $x_1, U_0$ described above satisfy that $U_0$ is an open disc, 
\[ x_1 \in P_1 \mbox{ and } U_0 \subset U_1 \mbox{ with } \overline{U_0} \cap \overline{U_k} = \emptyset \mbox{ for all } k \neq 1. \color{black}   \] 

It will be convenient to normalize our transfer operators. Ruelle showed in \cite[Theorem 3.6]{Ru} that $\mathcal{L}_{\delta, 0}$ has leading eigenvalue $1$ with associated positive $C^1$ eigenfunction $h_\delta$. We choose to work with the normalised transfer operator
\begin{eqnarray}  (\hat{\mathcal{L}}_{s, \ell} h)(x) &:=& \frac{\mathcal{L}_{s, \ell}(h \cdot h_\delta)}{h_\delta} .\end{eqnarray} 
The convenience of this setup is that we get to assume $\hat{ \mathcal{L}}_{\delta, 0}1 = 1$. We note that it suffices to prove Theorem \ref{decayestimate} for the operators $\hat{\mathcal{L}}_{s, \ell}$. 
When $\chi=0$ is the trivial character, we sometimes write 
$\hat{\mathcal{L}}_{s}$ for $\hat{\mathcal{L}}_{s, 0}$. 

%\begin{remark}  
Our next task is to fix a large number of parameters which will be needed throughout the rest of the argument. Though some of these parameters will not appear in our proofs for a good while yet, it is {\it not} an arbitrary choice to fix them now; almost the entire technical difficulty of this proof is to understand how to fix  coherent choices of these parameters, and to understand that they can be chosen independent of the variables $s, \ell$ that we are studying. %\end{remark}

All constants we choose below are positive real numbers.
Recall the expansion constants $1<\kappa < \kappa_1$ and $c_0$ from \eqref{definec_0andkappa}. 

Without loss of generality we assume that $\kappa < 2$. Choose now $n_1 \in \mathbb{N}$ and, for each $1\le i\le k_0$, a length $n_1 + 1$ cylinder $X_i $ contained in $U_0$ such that
\[    f^{n_1}X_i = P_i .\]

Let $\delta_1\in (0, 1)$ be a constant with respect to which the cylinders $X_1 , \ldots , X_{k_0}$ satisfy the NCP as in Definition \ref{NCP1} and Theorem
\ref{ncp}.
Denote the minimal  doubling constant for any $\nu_j$ by $C_3 > 1$ given by Proposition \ref{dob}.\color{black}

%Shrinking $U_0$ as necessary we can and shall assume that $U_0$ is a finite union of cylinders of length $n_1$, and that 
%\be T^{n_1} : U_0\cap J  \rightarrow J \mbox{ is bijective.} \ee 
Let 
\[ A_0 > \frac{8}{c_0(\kappa - 1)} \max( ||\tau||_{C^1}, ||h_\delta||_{C_1},  ||\log h_\delta ||_{C^1}, ||\theta||_{C^1}) + \frac{1}{c_0} +\frac{2}{\delta_2},\]
\[ E \geq 2A_0 + 1  ,\]
and
\[ \delta_3 \leq \frac{\delta_1\delta_2}{4E}.\]
Choose $N_0 \in \mathbb{N}$ large enough that  the NLI condition from Lemma \ref{NLI2} holds, and such that 
\[  4(E + 1)< \kappa^{N_0} , \; 160E< c_0\delta_1\delta_2 \kappa^{N_0}, \; 4A_0 < \kappa^{N_0}.\]
We write  
\be \label{choosesections}  v_1 := g^{N_0}_\xi \mbox{ and }  v_2 := g^{N_0}_{\hat \xi}  \ee
and note that they satisfy the conclusion of Lemma \ref{NLI2}. We write 
\[ N = N_0 + n_1.\]

Choose 
\[ \epsilon_1 \leq \min\left\{\frac{\log 2}{20E}, \frac{1}{160E},\color{black} \frac{c_0\log 2}{200\kappa_1^{n_1}E},  \frac{\delta_1\delta_2^2}{100}\right\}  \]
In addition we assume that $\epsilon_1$ is less than one tenth the distance from $U_0$ to the complement of $U_1$,
 that $\epsilon_1 \kappa_1^{n_1}$ is less than the minimum distance from any $P_i$ to the complement $U_i^c$, and that $2\epsilon_1$ is less than the distance from $U_0$ to any $U_k, k> 1$  Recall from Lemma \ref{dumblemma} that $v_1(U_1) \cap v_2(U_1) = \emptyset$.

%Let $r_0$ be the number of length $n_1$ subcylinders of $P_1$ meeting $U_0$ , and 
Choose 
\[ \eta < \min\left\{\frac{c_0\epsilon_1\delta_3}{4k_0 \kappa_1^{N_0}}, \;  , \frac{\delta_1^2\delta^2_2\epsilon^2_1}{512 k_0 }{},\;   \frac{1}{4k_0}\right\}.\]
\color{black} 
Let  $C_1 :=   \exp( \log C_3 \log_2 \frac{200 \kappa_1^{n_1}}{c_0^2\delta_3\kappa^{n_1}}) $,\color{black} 
and choose $a_0$ so that
\[  a_0 \leq\min\left\{ \frac{\log \left(1 - \frac{\eta e^{-NA_0}}{2} \right)  - \log \left(1 - \eta e^{-NA_0}\right)  }{2NA_0}, \frac{\log \left(1 + \frac{\eta e^{-NA_0}}{8C_1} \right)}{2NA_0} ,1,\right\} \color{black}   \]
%\color{blue}
%and that  for all $H\in C^1(U)$,
%\be\label{hsm}\sup_{|a-\delta|<a_0} ||\hat{\mathcal{L}}_{a }^NH ||^2_{L^2(\nu_{k})} \leq  \left(1 -  \frac{\eta e^{-NA_0}}{8C_1}\right) ^{-1/2} ||H||_{L^2(\nu_{k})}^2;\ee
%this is possible as the leading eigenvalues of $\hat {\mathcal L}_a$ changes continuously and  the leading eigenvalue
%of $\hat{ \mathcal L}_\delta$ is $1$. 

Our final choice of constant is to choose
\[0 < \epsilon_2 \mbox{ satisfying that }   \left( 1 - \frac{\eta^2 e^{-2NA_0}}{64C^2_1}\right) \leq (1 - \epsilon_2)^2 .\]

%\color{black}
Consider
\be (\tilde \tau, \tilde \theta) := (\tau_{N_0} \circ g^{N_0}_{\xi} - \tau_{N_0}\circ g^{N_0}_{\hat \xi}, \theta_{N_0} \circ g^{N_0}_{\xi} - \theta_{N_0} \circ g^{N_0}_{\hat \xi})   \ee 
as in Lemma \ref{NLI2}.
%\begin{lemma}
By a geometric sums argument and  the choice of $A_0$, we have 
\[ ||\tilde \tau ||_{C^1}  < \frac{A_0}{8}\mbox{ and }||\tilde \theta||_{C^1} < \frac{ A_0}{8}.\]
\color{black}

\begin{definition} For a positive real $R$, we write $K_R(U)$ for the set of all positive functions  $h\in C^1(U)$ satisfying
\begin{equation} |\nabla h (u)| \leq R h (u)\quad\text{for all $u\in U$.  }\end{equation}
\end{definition}

\subsection{Preparatory Lemmas} We recall some standard lemmas, which may be proven by direct calculation. %for the functions $\tilde \tau$ and $\tilde \theta$  from Lemma \ref{NLI2}. 
%\begin{proof} This follows by direct calculation in geometric series using the expanding properties of $T$. \end{proof}

\begin{lem}[Lasota-Yorke] \label{Lasota-Yorke-Lemma}  
 For all $a,b\in \mathbb{R}$ and $\ell \in \z$ with $|\delta - a| < 1$, all $|b| + |\ell| \geq 1$, the following hold: fixing $B>0$,
\begin{itemize} 
\item if $H \in K_B(U)$, then 
\[ \left| \nabla \hat{\mathcal{L}}^m_{a}H(u) \right| \leq A_0 \left(1 + \frac{B}{\kappa^m}  \right) |\hat{\mathcal{L}}^m_{a}H(u) | \]
for all $m \geq 0$ and for all $u \in U$;
\item if $h \in C^1(U)$ and $H\in C^1(U, \br)$ satisfy
 \color{black}
\[ |h(u)| < H(u)\text{ and } |\nabla h(u)  | \leq BH(u) \quad \text{for all $u\in U$}, \]
  then 
\[ |\nabla \hat{\mathcal{L}}^m_{a + ib, \ell}h(u) |  \leq A_0 \left( \frac{B}{\kappa^m}(\hat{\mathcal{L}}^m_{a}H)(u) + (|b| + |\ell|)(\hat{\mathcal{L}}_{a}^m|h|)(u) \right) \]
for all $u\in U$ and  any $m \in \N$. 
\end{itemize}
\end{lem}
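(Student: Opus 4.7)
\medskip

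The proof is a direct derivative calculation combined with the expansion of $f^m$ along inverse branches; the entire content is to verify that the universal constant $A_0$ fixed earlier dominates all the resulting factors. The plan is as follows.

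First I would expand the iterated operator as a sum over length-$m$ admissible multi-indices:
\[ (\hat{\mathcal{L}}^m_{a+ib,\ell} h)(u) = \sum_{I} \Phi_I(u)\,e^{-ib\tau_m(g_I u)+i\ell\theta_m(g_I u)}\,h(g_I u),\]
where $\Phi_I(u):=e^{-a\tau_m(g_I u)}\,h_\delta(g_I u)/h_\delta(u)$ is positive. Since $\hat{\mathcal{L}}_{\delta,0}1=1$, $\Phi_I(u)=\hat{\mathcal{L}}^m_a(\mathbf{1}_{g_I(U)})(u)$ up to restriction, so $\sum_I \Phi_I$ reconstructs $\hat{\mathcal{L}}^m_a H$ when pairing with $H(g_I u)$. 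Differentiating with $\nabla$ on $u\in\mathbb{C}=\mathbb{R}^2$, the product rule produces three types of terms: (i) derivatives of the modulus factor $\Phi_I$, (ii) derivatives of the complex phase $-b\tau_m\circ g_I+\ell\,\theta_m\circ g_I$, and (iii) the derivative of $h$ (resp.\ $H$) composed with $g_I$, for which $|\nabla(h\circ g_I)(u)|=|g_I'(u)|\,|\nabla h(g_I u)|$ since $g_I$ is holomorphic.

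Next I would prove the uniform gradient bounds for the compositions. For any admissible $I$ of length $m$, by the chain rule and eventual expansion \eqref{definec_0andkappa},
\[
|\nabla(\tau_m\circ g_I)(u)|\le\sum_{k=0}^{m-1}\|\tau\|_{C^1}\,|(f^k)'(g_I u)\cdot g_I'(u)|\le\frac{\|\tau\|_{C^1}}{c_0}\sum_{k=0}^{m-1}\kappa^{-(m-k)}\le\frac{\|\tau\|_{C^1}}{c_0(\kappa-1)},
\]
and identically for $\theta_m\circ g_I$ and for $\log h_\delta\circ g_I$. (The expansion bound extends from $J$ to $U$ by the Koebe distortion estimate absorbed into $c_0$.) Consequently
\[
|\nabla\Phi_I(u)|\le\Bigl(|a|+\tfrac{2\|\log h_\delta\|_{C^1}}{c_0(\kappa-1)}\Bigr)\tfrac{\|\tau\|_{C^1}}{c_0(\kappa-1)}\,\Phi_I(u)\le \tfrac{A_0}{2}\,\Phi_I(u),
\]
using $|a|\le\delta+1<3$ and the choice $A_0>\tfrac{8}{c_0(\kappa-1)}\max(\|\tau\|_{C^1},\|\log h_\delta\|_{C^1},\ldots)$. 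Similarly $|g_I'(u)|\le 1/(c_0\kappa^m)$, giving the factor $1/\kappa^m$ that appears in the statement (again absorbing $1/c_0$ into $A_0\ge 1/c_0$).

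Finally I would assemble the three terms. For part (1), with $h=H$ real and positive satisfying $|\nabla H|\le B H$,
\[
|\nabla\hat{\mathcal{L}}^m_a H(u)|\le\sum_I|\nabla\Phi_I(u)|H(g_I u)+\sum_I\Phi_I(u)|g_I'(u)|\,|\nabla H(g_I u)|\le\Bigl(\tfrac{A_0}{2}+\tfrac{B}{c_0\kappa^m}\Bigr)\hat{\mathcal{L}}^m_a H(u),
\]
which is $\le A_0(1+B/\kappa^m)\hat{\mathcal{L}}^m_a H(u)$. For part (2) the phase terms contribute at most
\[
\sum_I\Phi_I(u)\bigl(|b|\,|\nabla(\tau_m\circ g_I)|+|\ell|\,|\nabla(\theta_m\circ g_I)|\bigr)|h(g_I u)|\le\tfrac{A_0}{2}(|b|+|\ell|)\,\hat{\mathcal{L}}^m_a|h|(u),
\]
while the $\Phi_I$-derivative contributes $\tfrac{A_0}{2}\hat{\mathcal{L}}^m_a|h|(u)\le\tfrac{A_0}{2}(|b|+|\ell|)\hat{\mathcal{L}}^m_a|h|(u)$ under $|b|+|\ell|\ge 1$, and the $h$-derivative contributes $\tfrac{B}{c_0\kappa^m}\hat{\mathcal{L}}^m_a H(u)$. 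Summing gives the claimed bound.

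The only non-mechanical point is the uniform gradient estimate for $\tau_m\circ g_I,\theta_m\circ g_I$ and $\log h_\delta\circ g_I$ via a telescoping geometric series in the contraction factors of the inverse branches; after that, everything is just verifying that the threshold chosen for $A_0$ in the setup of Subsection 5.1 dominates each numerical constant appearing above. I expect no serious obstacle; the main care required is to ensure the cases $|a-\delta|<1$ (bounding $|a|$) and $|b|+|\ell|\ge 1$ (used to dump the non-$B$ part of $|\nabla\Phi_I|$ into the $|b|+|\ell|$ term) are invoked exactly where needed.
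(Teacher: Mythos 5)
Your proof is correct and is precisely the direct calculation the paper leaves to the reader (the paper itself records no proof, only the remark that the lemma "may be proven by direct calculation"). The essential steps — expanding $\hat{\mathcal{L}}^m$ over length-$m$ inverse branches, using the chain rule with the geometric-series bound $|\nabla(\tau_m\circ g_I)|\le\|\tau\|_{C^1}/(c_0(\kappa-1))$ and its analogues for $\theta_m$ and $\log h_\delta$, then checking that the choice of $A_0$ dominates all constants — are exactly what is intended. One small slip: the displayed bound for $|\nabla\Phi_I(u)|$ is mis-factored (the $\|\log h_\delta\|_{C^1}$ contribution is not multiplied by $\|\tau\|_{C^1}/(c_0(\kappa-1))$; it should read as a sum $\bigl(|a|\,\|\tau\|_{C^1}+2\|\log h_\delta\|_{C^1}\bigr)/(c_0(\kappa-1))$ or similar), but the intended estimate and the subsequent use of $A_0>\frac{8}{c_0(\kappa-1)}\max(\cdot)+\frac{1}{c_0}+\cdots$ and $|b|+|\ell|\ge 1$ are sound, so the conclusion stands.
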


%\begin{proof} Compute, using various models to guide the way \end{proof}

\begin{lemma} \textup{ (cf. \cite[Lemma 5.12]{Na})} \label{lemma2.3}
Suppose that $z_1, z_2 \in \mathbb{C}$ are non-zero with $|z_1| \leq |z_2|$ and that the magnitude \color{black} of the argument of $\frac{z_1}{z_2} $ is at least $ \theta\in [0, \pi]$. Then
\begin{equation} |z_1 + z_2| \leq \left(1 - \frac{\theta^2}{8}\right) |z_1| + |z_2|; \end{equation}
\end{lemma}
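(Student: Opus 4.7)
The plan is to reduce to a one-parameter model by rotation, then use a rewriting of $z_1 + z_2$ that makes the near-cancellation forced by the phase gap explicit, and finish with an elementary Taylor estimate on $\cos(\theta/2)$.

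First I would multiply both $z_i$ by $\overline{z_2}/|z_2|$; this is a rotation so it preserves moduli, the hypothesis on $\arg(z_1/z_2)$, and the inequality to be proved. Hence we may assume $z_2 = |z_2|$ is a positive real and write $z_1 = |z_1| e^{i\phi}$ with $\phi \in (-\pi, \pi]$ and $|\phi| \ge \theta$.

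The key algebraic step is the splitting
\[
z_1 + z_2 = \bigl(|z_2| - |z_1|\bigr) + |z_1|\bigl(1 + e^{i\phi}\bigr).
\]
The first summand is a non-negative real thanks to $|z_1| \le |z_2|$, and the second has modulus $|z_1| \cdot |1 + e^{i\phi}| = 2|z_1||\cos(\phi/2)|$. Applying the triangle inequality and using that $|\phi/2| \ge \theta/2$ lies in $[0, \pi/2]$ (where $\cos$ is non-negative and decreasing), I get
\[
|z_1 + z_2| \;\le\; |z_2| - |z_1| + 2|z_1|\cos(\theta/2) \;=\; |z_2| + |z_1|\bigl(2\cos(\theta/2) - 1\bigr).
\]

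The only remaining — and the only mildly nontrivial — point is the elementary inequality $2\cos(\theta/2) - 1 \le 1 - \theta^2/8$ on $[0, \pi]$. The plan is to use the Taylor bound $\cos(x) \le 1 - x^2/2 + x^4/24$ (valid for all $x \ge 0$, since the function $1 - x^2/2 + x^4/24 - \cos x$ and its first three derivatives vanish at $0$ and its fourth derivative $1 - \cos x$ is non-negative). Plugging in $x = \theta/2$ gives $2\cos(\theta/2) - 1 \le 1 - \theta^2/4 + \theta^4/192$, and this is $\le 1 - \theta^2/8$ as soon as $\theta^4/192 \le \theta^2/8$, i.e., $\theta^2 \le 24$, which comfortably covers $\theta \in [0, \pi]$. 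Combining with the displayed inequality above finishes the proof. I do not anticipate any genuine obstacle; the step most likely to be fiddly is confirming that the constant $1/8$ (rather than a slightly weaker one) really works up to $\theta = \pi$, which the Taylor bound just barely accommodates.
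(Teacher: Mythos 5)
Your proof is correct. Note first that the paper does not actually supply a proof of this lemma: it states it and cites Naud (Lemma 5.12 there), so there is no ``paper's own proof'' to compare against. Your argument is a clean self-contained verification. The rotation reduction is legitimate, the decomposition $z_1+z_2=(|z_2|-|z_1|)+|z_1|(1+e^{i\phi})$ is exact when $z_2>0$, the identity $|1+e^{i\phi}|=2|\cos(\phi/2)|$ is correct, and the monotonicity of $\cos$ on $[0,\pi/2]$ correctly turns $|\phi|\ge\theta$ into $|\cos(\phi/2)|\le\cos(\theta/2)$. The final numerical step also checks out: $g(x)=1-x^2/2+x^4/24-\cos x$ has $g(0)=g'(0)=g''(0)=g'''(0)=0$ and $g^{(4)}=1-\cos x\ge 0$, so $\cos(\theta/2)\le 1-\theta^2/8+\theta^4/384$, and $2\cos(\theta/2)-1\le 1-\theta^2/4+\theta^4/192\le 1-\theta^2/8$ once $\theta^2\le 24$, which holds on $[0,\pi]$. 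The more common route in the Dolgopyat literature is to square and use the law of cosines $|z_1+z_2|^2=|z_1|^2+|z_2|^2+2|z_1||z_2|\cos\phi$, then compare to $((1-\theta^2/8)|z_1|+|z_2|)^2$; that works too but requires a somewhat fussier comparison of $\cos\theta$ against $1-\theta^2/8$ near $\theta=\pi$. Your decomposition sidesteps that and makes the cancellation visible without squaring, which is arguably cleaner.
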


\begin{lemma}  \textup{ (cf. \cite[Lemma 5.11]{Na})} \label{quarterslemma} Let $R>1$.
 Suppose that $h \in C^1(U)$, and $ H \in K_{ER}(U)$   satisfy 
\begin{equation} |h(u)| \leq H(u) \mbox{ and } |\nabla h (u)| \leq ER \cdot H (u) \;\; \text{ for all $u\in U$.}\end{equation} 
%and that $x \in U_1 \cap J$. 
Then for any $x\in U_0$ and any section $v$ of $f^{N_0}$ that is defined on $U_1$, we have either
\begin{enumerate}
\item $|h\circ v| \leq \frac{3H\circ v}{4}$ on $B_{10\epsilon_1 / R}(x)$, or
\item $|h\circ v| \geq \frac{H\circ v}{4}$ on $B_{10\epsilon_1 / R}(x)$.
\end{enumerate}
\end{lemma}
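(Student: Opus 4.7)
\medskip

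\noindent\textbf{Proof proposal.}
The plan is to argue by contradiction: suppose neither (1) nor (2) holds, so that we can pick witnesses $y_1, y_2 \in B_{10\epsilon_1/R}(x)$ with $|h(v(y_1))| > \tfrac{3}{4} H(v(y_1))$ and $|h(v(y_2))| < \tfrac{1}{4} H(v(y_2))$. The aim is to show that such a large swing in the ratio $|h|/H$ is incompatible with the gradient control once one contracts through $v$, thus forcing one of the two alternatives to hold.

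First I would quantify the contraction of $v$. Since $v$ is a section of $f^{N_0}$ landing in (a neighbourhood of) the Julia set, the hyperbolicity estimate \eqref{definec_0andkappa} gives $|v'| \le (c_0 \kappa^{N_0})^{-1}$ on $U_1$. In particular $v([y_1, y_2])$ is a path in $U$ from $v(y_1)$ to $v(y_2)$ of length at most $\tfrac{20 \epsilon_1}{R c_0 \kappa^{N_0}}$. Setting $M := \sup_{w \in v(B_{10\epsilon_1/R}(x))} H(w)$ and integrating $\nabla h$ and $\nabla H$ along this path---using the hypotheses $|\nabla h|,\ |\nabla H| \le ER \cdot H \le E R M$---I would obtain
\[
\max\bigl(|h(v(y_1)) - h(v(y_2))|,\ |H(v(y_1)) - H(v(y_2))|\bigr) \;\le\; \frac{20 E \epsilon_1}{c_0 \kappa^{N_0}}\, M .
\]

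To close the argument I would plug in the standing parameter choices from Section \ref{Section3}. The constraints $\kappa^{N_0} > 4A_0 > 4/c_0$ and $\epsilon_1 \le \tfrac{1}{160E}$ force $\tfrac{20 E \epsilon_1}{c_0 \kappa^{N_0}} \le \tfrac{1}{32}$. Hence $H(v(y_1)),\, H(v(y_2)) \ge \tfrac{31 M}{32}$, and so
\[
|h(v(y_1))| - |h(v(y_2))| \;>\; \tfrac{3}{4} H(v(y_1)) - \tfrac{1}{4} H(v(y_2)) \;\ge\; \tfrac{3}{4}\cdot\tfrac{31 M}{32} - \tfrac{M}{4} \;=\; \tfrac{61 M}{128} \;>\; \tfrac{M}{4},
\]
whereas the Lipschitz estimate from the previous step gives $\bigl||h(v(y_1))| - |h(v(y_2))|\bigr| \le M/32$. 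Since we may assume $M > 0$ (otherwise $h$ vanishes on $v(B_{10\epsilon_1/R}(x))$ and both alternatives trivially hold), the resulting inequality $M/32 > M/4$ is absurd. The only genuine subtlety is ensuring that the long list of constants fixed in Section \ref{Section3} interlock cleanly to produce the bound $\tfrac{20E\epsilon_1}{c_0 \kappa^{N_0}} \le \tfrac{1}{32}$; past that, the argument is a routine Lipschitz comparison on the small contracted image $v(B_{10\epsilon_1/R}(x))$.
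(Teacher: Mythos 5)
The paper does not write out a proof of this lemma; it is grouped among ``standard lemmas, which may be proven by direct calculation,'' with a pointer to Naud's Lemma 5.11. Your argument is the standard Lipschitz-comparison proof that one would find there, and the arithmetic with the Section \ref{Section3} constants checks out: $\kappa^{N_0} > 4A_0 > 4/c_0$ gives $c_0\kappa^{N_0} > 4$, and $\epsilon_1 \le \tfrac{1}{160E}$ gives $20E\epsilon_1 \le \tfrac18$, so indeed $\tfrac{20E\epsilon_1}{c_0\kappa^{N_0}} \le \tfrac{1}{32}$.

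One small elision in the write-up: the inequality $|H(v(y_1)) - H(v(y_2))| \le M/32$ does not by itself yield $H(v(y_1)),\,H(v(y_2)) \ge \tfrac{31M}{32}$ --- it only controls the difference between those two particular values, not their distance from the supremum $M$. To get the stated lower bound you should apply the same Lipschitz estimate on the contracted image between $y_i$ and a point $y_0\in \overline{B_{10\epsilon_1/R}(x)}$ at which $H\circ v$ attains (or nearly attains) $M$; since the bound $|H(v(y))-H(v(y'))| \le \tfrac{20E\epsilon_1}{c_0\kappa^{N_0}}M$ holds uniformly for all pairs $y,y'$ in the ball, this is a one-line fix. (Also, the caveat $M>0$ is automatic since $H\in K_{ER}(U)$ is positive by definition.) With that repair, the contradiction $\tfrac{M}{32} \ge |h(v(y_1))|-|h(v(y_2))| > \tfrac{61M}{128}$ is sound and the proof is complete.
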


\subsection{Construction of the Dolgopyat operators} With preliminaries out of the way we now proceed to construct our Dolgopyat operators. Throughout this section, we fix a complex number $s = a + ib$ and a character $\chi_\ell$ with $\ell \in \mathbb{Z}$.  We assume that 
 $$|a - \delta| <  a_0 \quad \text{ and }\quad |b| + |\ell| \geq 1,$$ and write $\tilde \epsilon = \epsilon_1 / (|b| + |\ell|)$. \color{black}

In order to prove Theorem \ref{decayestimate}, there are two cases depending on whether $J$ is contained in a circle or not. 
In the case when $J$ is contained in a circle,  we will assume that $J$ is contained in the real line (this is achieved by conjugation with a fractional linear transformation, which does not effect holonomies or periods), and will  
 specialize to the case $\ell = 0$ for the entire remaining argument and hence prove Theorem \ref{decayestimate} for $\ell=0$.
 When $J$ is not contained in any circle, we deal with an arbitrary integer $\ell$ as required in Theorem \ref{decayestimate}.
 
 %The reason for doing so
%Secondly observe that in this case we are uninterested in the holonomy question as it is trivial, and will therefore specialize to the case $\ell = 0$ for the entire remaining argument. 

%For the most part we shall concentrate on the case that $J$ is not contained in a circle, for that is the more general setting, and the one for which holonomies come into play.

%To deal with the case when $J$ is contained in a circle first note that we may further assume that $J$ is contained in the real line (this is achieved by conjugation with a fractional linear transformation, which does not effect holonomies or periods). Secondly observe that in this case we are uninterested in the holonomy question, and will therefore specialize to the case $\ell = 0$ for the entire remaining argument. 

 Recall the length ${n_1 + 1}$ subcylinders  $X_1,  \ldots ,X_{k_0}$ of $P_1$. For each $k$, consider a cover of $U_0 \cap X_k$ by finitely many balls $B_{50\tilde \epsilon}(x^k_r), j =  1, \ldots , r_0 =r_0(k) $ with $x^k_r\in U_0 \cap X_k$ and $B_{10\tilde \epsilon}(x^k_r)$ pairwise disjoint;
 this is provided by a Vitali covering argument. For each $x^k_r$, we consider the gradient vector at $x^k_r$:
\[ w^k_r = b\nabla \tilde \tau (x^k_r)+ \ell\nabla \tilde \theta (x^k_r)\]
and its normalization
\[ \hat w^k_r= \frac{w^k_r}{|w^k_r|}.\]
Note that the NLI condition implies that 
\be\label{wi} |w^k_r| > \frac{\delta_2(|b| + |\ell|)}{2}.\ee
Applying Theorem \ref{ncp} (NCP),  we choose, for each $x^k_r$, a partner point $y^k_r \in  B_{5 \tilde \epsilon}(x^k_r) \cap X_k$ with
\be  \label{xi}|\langle y_r^k - x_r^k, \hat w_r^k \rangle | > 5 \delta_1 \tilde \epsilon. \ee 
For each point $x \in \mathbb{C}$ and each $\epsilon > 0$, we choose a smooth bump function $ \psi_{x, \epsilon}$ taking the value zero on the exterior of the $B_{ \epsilon}(x)$ and the value one on $B_{ \epsilon/2}(x)$. We may assume that 
 \[ ||\psi_{x, \epsilon}||_{C^1} \leq \frac{4}{\epsilon}.\]

For each $j=1,2$, we will associate $v_j(x_r^k)$ with $(j,1,r,k)$ and $v_j(y_r^k)$ with $(j,2,r,k)$,
so that  we parameterize  the set $$\{v_j(x_r^k), v_j(y_r^k): 1\le j\le 2, 1\le r\le r_0, 1\le k\le k_0\}$$ by
 $ \lbrace 1, 2\rbrace \times \lbrace 1, 2\rbrace \times \lbrace 1, \ldots , r_0 \rbrace \times \lbrace 1 ,\ldots , k_0\rbrace$.
%We think of a quadruple $(k, i, j, r)$ as  a choice of $j$, then a choice of either $x^r_j$ or $y^r_j$ (coming from the second coordinate), and then a choice of section, either $v_1$ or $v_2$, of $T^{N_0}$ from the first coordinate. 
%Hence the set of all such quadruples corresponds to the set $\{v_1(x^k_r), v_1(y^k_r), v_2(x^k_r), v_2(y^k_r): 1\le j\le k_0, 1\le r\le k_0\}$.
For a subset $\Lambda \subset \lbrace 1, 2\rbrace \times \lbrace 1, 2\rbrace \times \lbrace 1, \ldots , r_0 \rbrace \times \lbrace 1 ,\ldots , k_0\rbrace $, we define the function $\beta_\Lambda$ on $U$
as
\begin{equation*} 
\begin{cases}
1 - \eta  \left( \sum_{v_1(x_{k}^r)\in  \Lambda} \psi_{x^k_r,2\delta_3 \tilde \epsilon} \circ f^{N_0}
+ \sum_{v_1(y_{k}^r)\in  \Lambda} \psi_{y^k_r,2\delta_3 \tilde \epsilon} \circ f^{N_0} \right) 
& \text{on $ v_1(U_1)$} \\
1 - \eta  \left( \sum_{v_2(x_{k}^r)\in  \Lambda} \psi_{x^k_r,2\delta_3 \tilde \epsilon} \circ f^{N_0}
+ \sum_{v_2(y_{k}^r)\in  \Lambda} \psi_{y^k_r,2\delta_3 \tilde \epsilon} \circ f^{N_0})\right) 
& \text{on $v_2(U_1)$} \\
1& \text{elsewhere} .
\end{cases}
\end{equation*}

% \begin{align*}& \beta_\Lambda:=\\ &  1 - \eta \mathbbm{1}_{v_1(U_1)} (\sum_{(j, r): (1, 1, j, r) \in \Lambda} \psi_{x^r_j,\delta_3 \tilde \epsilon} \circ  f^{N_0}) - \eta \mathbbm{1}_{v_2(U_1)} (\sum_{(j, r): (2, 1, j, r) \in \Lambda} \psi_{x^r_j, \delta_3\tilde \epsilon}  \circ  f^{N_0} )
%\\& -  \eta \mathbbm{1}_{v_1(U_1)} (\sum_{(j, r) : (1, 2, j, r) \in \Lambda} \psi_{y^r_j,\delta_3 \tilde \epsilon} \circ  f^{N_0}) - \eta \mathbbm{1}_{v_2(U_1)} (\sum_{(j, r) : (2, 2, j, r) \in \Lambda} \psi_{y^r_j, \delta_3\tilde \epsilon}  \circ  f^{N_0} ); \end{align*}

%here we denote the indicator function of a set $B$ by  $\mathbbm{1}_B$. 
\begin{dfn} \rm We will say that $\Lambda \subset \lbrace 1, 2\rbrace \times \lbrace 1, 2\rbrace \times \lbrace 1 ,\ldots ,r_0 \rbrace \times \lbrace 1 ,\ldots ,k_0 \rbrace $ is {\it full} if for every $1 \leq r \leq r_0$ and $1 \leq k \leq k_0$,
there is $j\in \{1,2\}$ such that $v_j(x_r^k) $ or $v_j(y_r^k)$ belongs to $\Lambda$.
% there exist $1\le i, k\le 2$ such that $(k, i, j, r) \in \Lambda$.
 We write $\mathcal{F}$ for the collection of all full subsets. \end{dfn}
Fullness implies that the set of $x_r^k$'s and $y_r^k$'s indicated by $\Lambda$ forms a $100\tilde \epsilon$ net for $X_k$. 

\begin{Def}\rm Set $N:=N_0+n_1$.
For each $\Lambda \in \mathcal{F}$, we define the Dolgopyat operator $ \mathcal{M}_{\Lambda, a}$ on  $C^1(U)$
by
\[ \mathcal{M}_{\Lambda, a}h := \hat{\mathcal{L}}^{N}_{a, 0} (h\beta_\Lambda) .\]
 \end{Def}

\subsection{Properties of the Dolgopyat operators} Our next task is to establish two key properties of the Dolgopyat operators. 

\begin{theorem} \label{fact7.7} Fix $\Lambda \in \mathcal{F}$. If $H \in K_{E( |b| + |\ell| )}(U)$, then
\begin{enumerate}
\item $\mathcal{M}_{\Lambda, a} H \in K_{E( |b| + |\ell| )}(U)$ and 
\item $||\mathcal{M}_{\Lambda, a} H||_{L^2(\nu)} \leq (1 - \epsilon_2) ||H||_{L^2(\nu)}$. 
\end{enumerate}
\end{theorem}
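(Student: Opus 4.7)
The plan is to verify the two assertions separately: part (1) is a Lasota--Yorke calculation, while part (2) splits into a Cauchy--Schwarz reduction followed by a geometric contraction estimate that uses essentially every piece of machinery built so far.

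For (1), I first check that $H' := H\beta_\Lambda$ lies in a cone $K_{B'}(U)$ with $B' = (E + O(1))(|b|+|\ell|)$, using the product-rule bound
\[
\frac{|\nabla H'|}{H'} \;\le\; \frac{|\nabla H|}{H} + \frac{|\nabla \beta_\Lambda|}{\beta_\Lambda},
\]
where $|\nabla \beta_\Lambda|/\beta_\Lambda$ is controlled via the chain rule on $\psi_{\cdot, 2\delta_3\tilde\epsilon} \circ f^{N_0}$, the Lipschitz bound $\|\psi\|_{C^1} \le 2/(\delta_3 \tilde\epsilon)$, the upper bound $|(f^{N_0})'| \le \kappa_1^{N_0}$, and the smallness of $\eta$, which keeps $\beta_\Lambda \ge 1/2$. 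Applying the first bullet of Lemma \ref{Lasota-Yorke-Lemma} to the positive function $H'$ yields
\[
|\nabla \mathcal{M}_{\Lambda, a} H| \le A_0 \Bigl(1 + \tfrac{B'}{\kappa^N}\Bigr) \mathcal{M}_{\Lambda, a} H.
\]
The inequality $A_0(1 + B'/\kappa^N) \le E(|b|+|\ell|)$ then reduces, after dividing by $|b|+|\ell| \ge 1$, to a combination of the constraints $E \ge 2A_0 + 1$, $4A_0 < \kappa^{N_0}$, and $\eta < c_0 \epsilon_1 \delta_3/(4 k_0 \kappa_1^{N_0})$ that were imposed in the setup.

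For (2), my first step is to dispose of cancellations by the usual Cauchy--Schwarz inequality, weighted with the probability kernel implicit in $\hat{\mathcal{L}}_\delta$. This relies on the normalization $\hat{\mathcal{L}}_\delta 1 = 1$ and gives the pointwise bound $|\hat{\mathcal{L}}^N_a H'(x)|^2 \le \hat{\mathcal{L}}^N_{2a-\delta}(|H'|^2)(x)$. Integration against $\nu$ combined with the identity $\int \hat{\mathcal{L}}^N_s g \, d\nu = \int g \, e^{(\delta-s)\tau_N}\, d\nu$ (which follows from $\mathcal{L}_\delta^* \nu_0 = \nu_0$ and $\nu = h_\delta \nu_0$) yields
\[
\|\mathcal{M}_{\Lambda, a} H\|_{L^2(\nu)}^2 \;\le\; e^{2 a_0 N \|\tau\|_\infty} \int (H\beta_\Lambda)^2\, d\nu.
\]
Since $a_0$ was chosen so that $e^{2 a_0 N \|\tau\|_\infty}$ is essentially $1$, the problem reduces to the geometric contraction
\[
\int (H\beta_\Lambda)^2\, d\nu \;\le\; \Bigl(1 - \tfrac{\eta^2 e^{-2NA_0}}{64 C_1^2}\Bigr) \int H^2\, d\nu,
\]
which is precisely the gap built into the choice of $\epsilon_2$.

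The heart of the proof, and the main obstacle, is this last inequality. Rewriting the left-hand side as $\int H^2\, d\nu - \int H^2(1-\beta_\Lambda^2)\, d\nu$, it suffices to produce a definite positive mass on the support of $1 - \beta_\Lambda^2$, which lies in $v_1(U_1) \cup v_2(U_1)$. My strategy is to lower-bound the loss term by summing the contributions from the bump balls $v_j(B_{\delta_3\tilde\epsilon}(z_r^k))$ with $(j, \cdot, r, k) \in \Lambda$: on each such ball $1 - \beta_\Lambda^2 \ge \eta$ (disjointness of neighbouring bumps follows from the choice of the $\tilde\epsilon$-scale Vitali net and $\delta_3 \ll 1$), the cone condition on $H$ keeps $H^2$ pinned to within a factor $2$ of its value at $v_j(z_r^k)$ (the radius in $U$ is $\lesssim \delta_3\tilde\epsilon\,\kappa^{-N_0}/c_0$, which the choice of $\epsilon_1$ makes negligible against $E(|b|+|\ell|)$), and the conformal change of variables $\int_{v_j(A)} d\nu = \int_A e^{-\delta \tau_{N_0} \circ v_j}\, d\nu$ converts the image-side $\nu$-mass to $\nu$-mass on $X_k \subset U_1$. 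Fullness of $\Lambda$ combined with the cover $X_k \cap U_0 \subset \bigcup_r B_{100\tilde\epsilon}(z_r^k)$ furnished by the Vitali construction, and then the doubling Proposition \ref{dob} iterated through $\log_2(100/\delta_3)$ halvings (which is where $C_1$ comes from), forces $\sum_r \nu(B_{\delta_3\tilde\epsilon}(z_r^k) \cap X_k) \ge \nu(X_k \cap U_0)/C_1$. Finally, comparing the aggregated lower bound with $\int H^2\, d\nu = \int \hat{\mathcal{L}}^N_\delta(H^2)\, d\nu$ expresses the right-hand side as a weighted sum over inverse branches of $f^N$, in which the two NLI branches $v_1, v_2$ each contribute a definite fraction, making the loss at least a constant multiple of $\int H^2\, d\nu$. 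The delicate point, and what makes the estimate nontrivial, is that $\tilde\epsilon = \epsilon_1/(|b|+|\ell|)$ scales inversely with $|b|+|\ell|$ in exactly the way needed to make the cone-Harnack comparison and the doubling estimate both scale-invariant; book-keeping the constants $\eta, C_1, A_0, N$ through this chain produces exactly the loss $\eta^2 e^{-2NA_0}/(64 C_1^2)$.
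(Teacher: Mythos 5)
Part (1) of your proposal follows the paper's route and is fine: the product rule bound on $\beta_\Lambda H$, the estimate on $\nabla\beta_\Lambda$ via the chain rule through $f^{N_0}$, and then the Lasota--Yorke contraction with the parameter choices for $\eta$, $E$, $N$ all line up with the paper's computation.

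Part (2), however, has a genuine gap in the very first reduction. Your Cauchy--Schwarz step
\[
|\hat{\mathcal{L}}^N_a (H\beta_\Lambda)(x)|^2 \;\le\; \hat{\mathcal{L}}^N_{2a-\delta}\bigl((H\beta_\Lambda)^2\bigr)(x),
\]
followed by the duality $\int \hat{\mathcal{L}}^N_s g\, d\nu = \int g\,e^{(\delta-s)\tau_N}\,d\nu$, is correct, but it reduces the theorem to the source-side inequality
\[
\int (H\beta_\Lambda)^2\, d\nu \;\le\; (1-\text{gap})\int H^2\, d\nu,
\]
and this inequality is false for general $H\in K_{E(|b|+|\ell|)}(U)$. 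The set where $\beta_\Lambda<1$ is contained in $v_1(U_1)\cup v_2(U_1)$, i.e.\ in at most two of the components $U_{\xi_{-N_0}}, U_{\hat\xi_{-N_0}}$, and is moreover a union of balls of radius $\sim\delta_3\tilde\epsilon\kappa^{-N_0}$. The cone condition $|\nabla H|\le E(|b|+|\ell|)H$ constrains $H$ only within each connected component $U_j$ of the disjoint union $U$, and imposes no relation whatsoever between the values of $H$ on different $U_j$'s; in fact even within a single $U_j$ it only pins $H$ at the short scale $1/(E(|b|+|\ell|))$. So one can take $H$ essentially constant and equal to $1$ on the two components containing the loss set and equal to a huge constant $M$ on the others, making $\int(H\beta_\Lambda)^2\,d\nu / \int H^2\,d\nu \to 1$. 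Your closing assertion that ``the two NLI branches $v_1, v_2$ each contribute a definite fraction'' of $\int\hat{\mathcal{L}}^N_\delta(H^2)\,d\nu$ is precisely the step that would be needed, and it fails for the same reason: in the sum $\sum_v p_v(x)H^2(v(x))$ the summands $p_{v_j}H^2(v_j(x))$ can be an arbitrarily small fraction of the total, since nothing ties $H$ at $v_j(x)$ to $H$ at the other preimages.

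The paper avoids this by keeping the Cauchy--Schwarz on the image side,
\[
\bigl(\hat{\mathcal{L}}^N_\delta(H\beta_\Lambda)\bigr)^2 \;\le\; \bigl(\hat{\mathcal{L}}^N_\delta H^2\bigr)\bigl(\hat{\mathcal{L}}^N_\delta\beta_\Lambda^2\bigr),
\]
rather than collapsing the weights into a single $\hat{\mathcal{L}}^N_{2a-\delta}$. The decisive point is then that $\hat{\mathcal{L}}^N_\delta\beta_\Lambda^2$ is $\le 1-\eta e^{-NA_0}$ on a set $\hat S_k\subset P_k$ which occupies a $\sim 1/C_1$ fraction of $\nu_k(P_k)$ at scale $\tilde\epsilon$, while $\tilde H := \hat{\mathcal{L}}^N_\delta H^2$ is, by Lasota--Yorke, again in the cone $K_{E(|b|+|\ell|)}(U)$ and hence Harnack-pinned at precisely the scale $\tilde\epsilon$ over all of $P_k$. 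One then never needs to compare $H$ across branches or across components; all comparisons are for the single well-behaved function $\tilde H$ on a single $P_k$. Finally, the paper carries out the whole estimate at $a=\delta$ and only then perturbs to $|a-\delta|<a_0$; this matches your perturbation factor, but the perturbation is applied on top of the image-side estimate. You would need to rebuild your part (2) around this image-side Cauchy--Schwarz for the argument to go through.
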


\begin{proof} Suppose that $H \in K_{E(|\ell| + |b|)}(U)$. Then direct calculation yields that
\begin{equation}  \beta_\Lambda H \in K_{\frac{4}{3} \left( \frac{2\eta k_0\kappa_1^{N_0}}{c_0\epsilon_1 \delta_3} \color{black}+ E\right) (|b| + |\ell|)} (U).\end{equation} 
By our choice of $\eta$, we therefore have $\beta_\Lambda H \in K_{2(E + 1)(|b| + |\ell|)}(U)$. In conjunction with Lemma \ref{Lasota-Yorke-Lemma} and the definition of the Dolgopyat operator, this yields that for all $u\in U$,
\begin{equation}  \left|\nabla \mathcal{M}_{\Lambda, a}H(u) \right| \leq A_0 \left( 1 + \frac{2(E + 1)(|b| + |\ell|)}{\kappa^N}  \right)|\mathcal{M}_{\Lambda, a}H(u)  |.\end{equation}  
Our choices of $N$ and $E$ now yield
\begin{equation} \mathcal{M}_{\Lambda, a}H \in K_{E(|b|+ |\ell|)}(U) \end{equation} 
as desired in (1). 

We now address part (2). We work first with the case that $a = \delta$, and then use continuity in $a$ to conclude the result; details are below. 

%As before we are shadowing an argument of Dolgopyat.
 A direct calculation in Cauchy Schwartz gives
\begin{equation}\label{ml1} (\mathcal{M}_{\Lambda, \delta}H)^2 =
( \hat{\mathcal{L}}^N_{\delta} (H\beta_\Lambda))^2 \leq  ( \hat{\mathcal{L}}^N_{\delta} H^2)( \hat{\mathcal{L}}^N_{\delta} \beta^2_\Lambda). \end{equation} 

We are therefore interested in finding a large subset of $U$ where 
$ \hat{\mathcal{L}}^N_{\delta} (\beta^2_\Lambda) $ is strictly less than $1$. 
For each $k$, consider now the subset $\tilde S_k$ which is the union of
\[\lbrace x^k_r \in X_k: v_j(x_r^k) \in \Lambda \text{ for some $j=1,2$} \rbrace  \]
and \[  \lbrace y^k_r \in X_k: v_j(y_r^k)\in \Lambda  \text{ for some $j=1,2$}   \rbrace. \]
%\[\lbrace x^r_j \in X_r: (1, 1, j, r) \in \Lambda  \mbox{ or } (2, 1, j, r) \in \Lambda \rbrace  \]
%and \[  \lbrace y^r_j \in X_r: (1, 2, j, r) \in \Lambda  \mbox{ or } (2, 2, j, r) \in \Lambda \rbrace. \]
Recall that  $ f^{n_1} X_k=P_{k} $. 
Write $S_k = f^{n_1} \tilde S_k \subset P_{k}$. 
Consider the following neighbourhood of $S_k$:
\[ \hat S_k := \cup_{z \in S_k} B_{c_0 \tilde \epsilon \delta_3 \kappa^{n_1}}(z)\subset U_{k}. \]
We bound $ \hat{\mathcal{L}}^N_{\delta}( \beta^2_\Lambda)$ away from one on $\hat S_k$: if $y \in \hat S_k$, then
 %at least one branch $v$ of $T^{-N}y$ has $\beta_\Lambda(v(y)) < 1 - \eta$ (by the expansion properties of $T$ and the definition of $\beta_\Lambda$). 
for some ``section'' $\hat v$ of $f^N$ defined on $P_{k}$ we have
  \be\label{be2}\beta^2_\Lambda(\hat v(y))\le \beta_\Lambda(\hat v(y)) < 1 - \eta\ee
  by the definition of $\beta_\Lambda$.

 By \eqref{be2}, the normalization $\hat{\mathcal{L}}^N_\delta(1) = 1$, 
 and the choice of $A_0$, we compute for any $y \in \hat S_k$, 
  \begin{eqnarray}\label{sr} \quad   &&\hat{\mathcal{L}}^N_{\delta} (\beta_\Lambda^2) (y)= \sum_{\mbox{sections $v$ of $f^N$}} \frac{ e^{-\delta\tau_N(v(y))}\beta_\Lambda^2(v(y)) h_\delta(v(y))}{h_\delta(y)}\notag \\
 &\leq& \left( \sum_{\mbox{sections $v$ of $f^N$}} \frac{ e^{-\delta\tau_N(v(y))} h_\delta(v(y))}{h_\delta(y)}\right) -\eta \frac{ e^{-\delta\tau_N(\hat v(y))} h_\delta(\hat v(y))}{h_\delta(y)} \notag \\
 &\leq& 1 -\eta \frac{ e^{-\delta\tau_N(\hat v(y))} h_\delta(\hat v(y))}{h_\delta(y)} \notag \\
 &=& 1 -\eta  e^{-2 N \|\tau\|_\infty -2\| \log h_\delta\|_\infty} \notag \\
 &\leq& 1 - \eta e^{-NA_0}. \label{LNboundonS_k}\end{eqnarray}
 
  Note that 

\be P_{k} \subset  \cup_{z \in S_k } B_{100 \tilde \epsilon \kappa_1^{n_1}/c_0}(z);\ee 
indeed, if $x\in P_{k}$, then $x=f^{n_1} y$ for some $y\in X_k$. We can then choose either an $x^k_r$ or a $y^k_r$ in $\tilde S_k$ such that
one of $|x^k_r - y|$ and $|y^k_r - y|$ is less than $100 \tilde \epsilon$. We then have $|f^{n_1}( x^k_r) - x| < 100 \kappa_1^{n_1}\tilde \epsilon /c_0$ and
 $|f^{n_1} (y^k_r) - x| < 100 \kappa_1^{n_1} \tilde \epsilon/c_0$  respectively, as required.

For ease of notation, we will write $$\tilde H :=\hat{\mathcal{L}}_{\delta}^NH^2$$ for the rest of this proof.  We now use regularity of $\tilde H$ together with the doubling properties of the measure $\nu_j$ (Theorem \ref{45}) to bound the integral $\int_{\hat S_k}\tilde H  d\nu$ from below by a  constant multiple of $\int_{\hat S_k}\tilde H  d\nu$. By the Lasota-Yorke Lemma \ref{Lasota-Yorke-Lemma}, we have
$\tilde H \in K_{E(|b| + |\ell|)} (U)$ and hence  $\log \tilde H$ is Lipschitz. So by the small size of $\epsilon_1$, we have
that  
\be\label{inee} \sup \lbrace \tilde H(x) : x \in B_{100 \tilde \epsilon \kappa_1^{n_1}/c_0}(z) \rbrace  \leq 2 \inf \lbrace \tilde H(x) : x \in B_{100 \tilde \epsilon \kappa_1^{n_1}/c_0}(z)\rbrace. \color{black} \ee
Setting $C_1 =  \exp( \log C_3 \log_2 \frac{200 \kappa_1^{n_1}}{c_0^2\delta_3\kappa^{n_1}}) \color{black} $, we deduce
\be\label{pkk} \int_{P_{k}} \tilde H d\nu_{k}\leq  2C_1 \int_{\hat S_k} \tilde Hd\nu_{k}\ee
as follows:
\begin{eqnarray*}&& \int_{P_{k}} \tilde H d\nu_{k}\leq   \sum_{z \in S_k} \int_{B_{100 \tilde \epsilon \kappa_1^{n_1}/c_0}(z)} \tilde Hd\nu_{k}\\
&\leq&  \sum_{z \in S_k} \sup \lbrace \tilde H(x) :  x \in B_{100 \tilde \epsilon \kappa_1^{n_1}/c_0}(z) \rbrace \, \nu_{k} ( B_{100 \tilde \epsilon \kappa_1^{n_1}/c_0}(z) )\\
&\leq& 2 \sum_{z \in S_k} \inf \lbrace \tilde H(x) :  x \in B_{100 \tilde \epsilon \kappa_1^{n_1}/c_0}(z) \rbrace \, \nu_{k} ( B_{100 \tilde \epsilon \kappa_1^{n_1}/c_0}(z) )\;\;\text{by \eqref{inee}}\\
&\leq& 2C_1\sum_{z \in S_k} \inf \lbrace \tilde H(x) :  x \in B_{100 \tilde \epsilon \kappa_1^{n_1}/c_0}(z) \rbrace \, \nu_{k} ( B_{c_0\tilde\epsilon\delta_3\kappa^{n_1}}(z) )\;\; \text{by choice of $C_3$} \\
&\leq& 2C_1 \int_{\hat S_k} \tilde Hd\nu_{k}. \end{eqnarray*}
%where the constant $C_1 = \exp( \log C_3 \log_2 \frac{100 \kappa_1^{n_1}}{c_0^2\delta_3\kappa^{n_1}}) $ arises from the doubling condition on $\nu_{k}$. 

Putting all these together we have 
\begin{eqnarray*} &&\int_{P_k} \mathcal{L}^N_{\delta} (H^2) d\nu_k - \int_{P_k} (\mathcal{M}_{\Lambda, \delta}H)^2d \nu_{k} \\
 &\geq & \int_{P_k}  \hat{\mathcal{L}}_{\delta}^N(H^2)   - \hat{\mathcal{L}}_{\delta}^N(H^2) \hat{\mathcal{L}}_{\delta}^N (\beta_\Lambda ^2)  d\nu_k \quad \text{ by \eqref{ml1}} \\
 &\geq & \int_{\hat S_k}  \hat{\mathcal{L}}_{\delta}^N(H^2)   -\hat{\mathcal{L}}_{\delta}^N(H^2) \hat{\mathcal{L}}_{\delta}^N (\beta_\Lambda ^2) d\nu_k \quad \text{ as $0\le \beta_\Lambda \le 1$}\\
&\geq&  \frac{\eta e^{-NA_0}}{2}  \int_{\hat S_k} (\hat{\mathcal{L}}_{\delta}^NH^2 )d\nu_{k} \quad \text{ by \eqref{LNboundonS_k}}\\
&\geq&   \frac{\eta e^{-NA_0}}{8C_1}    \int_{P_k} (\hat{\mathcal{L}}_{\delta}^NH^2 )d\nu_{k}\quad \text{by \eqref{pkk}}.  \end{eqnarray*}
Summing over all $k$ and using that $\hat{\mathcal{L}}_\delta$ preserves $\nu$, we have 
\begin{eqnarray*} ||H||^2_{L^2(\nu)} - ||\mathcal{M}_{\Lambda, \delta}H||^2_{L^2(\nu)} &=& \int_{J} \mathcal{L}^N_{\delta} (H^2) d\nu - \int_{J} (\mathcal{M}_{\Lambda, \delta}H)^2d \nu\\
&\geq&  \frac{\eta e^{-NA_0}}{8C_1}    \int_{J} \hat{\mathcal{L}}_{\delta}^N(H^2 )d\nu\\
&=&   \frac{\eta e^{-NA_0}}{8C_1}    \int_{J} H^2 d\nu.
\end{eqnarray*}
We now have, for $|a - \delta| < a_0$, that
\begin{eqnarray*} 
  ||\mathcal{M}_{\Lambda, a}H||_{L^2(\nu)}^2 &\leq&e^{2N||\tau||_\infty a_0}  ||\mathcal{M}_{\Lambda, \delta }H||_{L^2(\nu_{k})}^2\\
  &\leq& e^{2N||\tau||_\infty a_0 }  \left( 1 - \frac{\eta e^{-NA_0}}{8C_1}\right)   ||H||^2_{L^2(\nu)}\\
    &\leq&  \left( 1 - \frac{\eta^2 e^{-2NA_0}}{64C^2_1}\right)   ||H||^2_{L^2(\nu)}\\
\end{eqnarray*}
so we are done by choice of $\epsilon_2$. \color{black} 
%Thus using \eqref{hsm}, \color{blue}
%\begin{multline*}  ||\mathcal{M}_{\Lambda, a}H||_{L^2(\nu_{k})}^2  \leq \left(1 - \frac{\eta e^{-NA_0}}{8C_1} \right) ||\hat{\mathcal{L}}_{a}^NH ||^2_{L^2(\nu_{k})}\leq \\
 %\left(1 - \frac{\eta e^{-NA_0}}{8C_1} \right)^{1/2} ||H ||^2_{L^2(\nu_{k})} \leq ( 1 - \epsilon_2)^2 ||H||_{L^2(\nu_{k})}^2.\end{multline*}
 %\color{black}
%Since this is true for every $1 \leq k \leq k_0$ the result follows. 
\end{proof}

\subsection{The iterative argument} Our final technical challenge is to relate the Dolgopyat operator to the twisted transfer operators in preparation for an iterative argument. %There are two things to check; the first is challenging, the second much simpler. 

\begin{theorem} \label{fact7.8}
There exists $a_0>0$ such that for all $|\delta - a| < a_0$, we have the following: for every $h \in C^1(U)$ and $H \in K_{E(|\ell| + |b|)}(U)$ satisfying 
\begin{equation*} |h| \leq H \mbox{ and } |\nabla h | \leq E(|\ell| + |b| ) H \quad \text{ pointwise on $U$},\end{equation*} 
 there is a choice of  $\Lambda\in\mathcal{F}$ such that
\begin{equation} \label{518} |\hat{\mathcal{L}}^N_{s, \ell} h | \leq \mathcal{M}_{\Lambda, a} H     \end{equation} 
and
\begin{equation}\label{sp} |\nabla (\hat{\mathcal{L}}^N_{s, \ell} h) | \leq E(|\ell| + |b| )\mathcal{M}_{\Lambda, a} H  \end{equation} 
both hold pointwise on $U$ (here $s=a+ib$).
\end{theorem}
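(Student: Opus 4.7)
The plan is to build $\Lambda$ one pair $(r,k)$ at a time: for each pair we decide whether the companion sections $v_1\circ w_k$ and $v_2\circ w_k$ (where $w_k$ is the unique length-$n_1$ section with $w_k(P_k)=X_k\subset U_0$) already admit a per-summand bound strictly better than their trivial moduli, or whether we must manufacture cancellation from the NLI-driven phase oscillation. Writing
\[
 \hat{\mathcal L}^N_{s,\ell}h(x) = \sum_v T_v(x), \qquad T_v(x) = e^{-s\tau_N(v(x))}\chi_\ell(\alpha_N(v(x)))\,h(v(x))\,\tfrac{h_\delta(v(x))}{h_\delta(x)},
\]
and denoting the trivial modulus bound $T'_v(x) := H(v(x))\,e^{-a\tau_N(v(x))}\,h_\delta(v(x))/h_\delta(x)$, we note that by the disjointness of the $X_k$'s and the smallness of the bump radii relative to the cylinder geometry, only the two companion sections (with $k=m$) at $x\in U_m$ can possibly see $\beta_\Lambda<1$; every other summand satisfies $|T_v|\le T'_v=\beta_\Lambda(v)\,T'_v$.

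For each pair $(r,k)$ apply Lemma~\ref{quarterslemma} to $v_j$ on the disc $B_{10\tilde\epsilon}(z)$ for each $(j,z)\in\{1,2\}\times\{x^k_r,y^k_r\}$. If some $(j,z)$ falls into the ``small'' alternative $|h\circ v_j|\le\tfrac{3}{4} H\circ v_j$ throughout the disc, adjoin $(j,i,r,k)$ to $\Lambda$, where $i=1$ if $z=x^k_r$ and $i=2$ if $z=y^k_r$. Since $2\delta_3\tilde\epsilon<10\tilde\epsilon$ and $\eta<1/4$, this choice guarantees $|T_{v_jw_k}|\le\tfrac{3}{4} T'_{v_jw_k}\le(1-\eta)T'_{v_jw_k}\le\beta_\Lambda(v_jw_k)\,T'_{v_jw_k}$ on the bump's support, while the partner section is bounded trivially by $T'_{v_{3-j}w_k}=\beta_\Lambda(v_{3-j}w_k)\,T'_{v_{3-j}w_k}$.

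If instead every $(j,z)$ lies in the ``big'' alternative $|h\circ v_j|\ge\tfrac{1}{4} H\circ v_j$ on $B_{10\tilde\epsilon}(z)$, extract cancellation via NLI and NCP. Let $\psi(u)$ denote the argument of $T_{v_1w_k}/T_{v_2w_k}$ at $u=w_k(x)$; Lemma~\ref{NLI2} gives $|w^k_r|\ge\delta_2(|b|+|\ell|)/2$, while the big alternative combined with hyperbolic contraction of $v_j$ forces the $\arg h$-contribution to $\nabla\psi$ to be bounded by $8E(|b|+|\ell|)/\kappa^{N_0}$, made negligible by $160E<c_0\delta_1\delta_2\kappa^{N_0}$. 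Combining with the NCP displacement $|\langle y^k_r-x^k_r,\hat w^k_r\rangle|\ge 5\delta_1\tilde\epsilon$ and a first-order Taylor estimate yields
\[
 |\psi(y^k_r)-\psi(x^k_r)|\ge 2\delta_1\delta_2\epsilon_1,
\]
while the total oscillation of $\psi$ on $B_{5\tilde\epsilon}(x^k_r)$ is at most $O(A_0\epsilon_1)<\pi$ by the smallness of $\epsilon_1$, ruling out wrap-around modulo $2\pi$; hence some $z\in\{x^k_r,y^k_r\}$ satisfies $|\psi(z)|\ge\delta_1\delta_2\epsilon_1$ in the principal branch. Picking $j$ with $|T_{v_jw_k}(z)|\le|T_{v_{3-j}w_k}(z)|$, Lemma~\ref{lemma2.3} combined with the big-case lower bound $|T_{v_jw_k}(z)|\ge\tfrac{1}{4} T'_{v_jw_k}(z)$ produces
\[
 |T_{v_1w_k}(z)|+|T_{v_2w_k}(z)|-|T_{v_1w_k}(z)+T_{v_2w_k}(z)|\ge\tfrac{\delta_1^2\delta_2^2\epsilon_1^2}{32}\,T'_{v_jw_k}(z).
\]
Since $\eta<\delta_1^2\delta_2^2\epsilon_1^2/(512k_0)$, this surplus strictly exceeds $\eta\,T'_{v_jw_k}$, and continuity of the $T_{v_iw_k}$ and $T'_{v_iw_k}$ over the much smaller ball $B_{2\delta_3\tilde\epsilon}(z)$ propagates the inequality throughout the bump; adjoin $(j,i,r,k)$ to $\Lambda$.

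Thus at least one quadruple per $(r,k)$ enters $\Lambda$, so $\Lambda\in\mathcal F$, and summing the per-pair inequalities together with the trivial bounds on non-companion summands yields $|\hat{\mathcal L}^N_{s,\ell}h|\le\mathcal M_{\Lambda,a}H$. For the gradient inequality~\eqref{sp}, Lemma~\ref{Lasota-Yorke-Lemma}(2) gives $|\nabla\hat{\mathcal L}^N_{s,\ell}h|\le A_0(|b|+|\ell|)(E\kappa^{-N}+1)\hat{\mathcal L}^N_aH$; each point lies in at most $k_0$ active bumps and $\eta<1/(4k_0)$, so $\beta_\Lambda\ge 3/4$ pointwise and therefore $\hat{\mathcal L}^N_aH\le\tfrac{4}{3}\mathcal M_{\Lambda,a}H$, and the constraints $\kappa^N>4A_0$ and $E\ge 2A_0+1$ then deliver $\tfrac{4}{3}A_0(E\kappa^{-N}+1)\le E$, completing~\eqref{sp}. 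The principal obstacle is the bookkeeping of the ``big'' case: the NLI lower bound on $|w^k_r|$ must dominate the $\arg h$ error, the NCP displacement must beat the Taylor remainder in $\psi$, and the cancellation $(\delta_1\delta_2\epsilon_1)^2/32$ must simultaneously absorb both $\eta$ and the variation of the $T_{v_jw_k}$ across the bump, all within the parameter choices fixed in Subsection~\ref{Section3} independently of $s,\ell$.
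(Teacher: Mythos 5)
Your proof is substantively the same as the paper's: you split into the small/big alternatives via Lemma~\ref{quarterslemma}, extract an argument lower bound from NLI and NCP via the same Taylor/Lipschitz estimates, invoke Lemma~\ref{lemma2.3} for cancellation, construct $\Lambda$ with one quadruple per $(r,k)$, and close with the Lasota--Yorke lemma. The only presentational difference is that the paper first proves the intermediate estimate $|\hat{\mathcal L}^{N_0}_{s,\ell}h|\le\hat{\mathcal L}^{N_0}_a(\beta_\Lambda H)$ (which it bundles into the $\Delta_1,\Delta_2$ ratios and Lemma~\ref{fact8.2}) and then applies $\hat{\mathcal L}^{n_1}_a$, whereas you decompose $\hat{\mathcal L}^N_{s,\ell}$ directly into summands over composed sections $v_j\circ w_k$; these are equivalent, and your bookkeeping correctly identifies that only the two companion sections can see $\beta_\Lambda<1$.

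Two small arithmetic points you should fix. First, in the ``small'' case you write $\tfrac34 T' \le (1-\eta)T'\le\beta_\Lambda T'$; since up to $k_0$ bumps can be active simultaneously, the correct lower bound is $\beta_\Lambda\ge 1-k_0\eta$, so the middle term should be $(1-k_0\eta)T'$. The constraint $\eta<\tfrac{1}{4k_0}$ gives $1-k_0\eta>\tfrac34$, so the chain still closes, but as written the inequality $(1-\eta)\le\beta_\Lambda$ is false when $k_0\ge 2$. Second, your claimed argument bound $|\psi(z)|\ge\delta_1\delta_2\epsilon_1$ is a factor of four too optimistic; the paper's argument only delivers $|\mathrm{Arg}(z)|\ge\tfrac{\delta_1\delta_2\epsilon_1}{4}$ on one of the two small balls, so the cancellation surplus from Lemma~\ref{lemma2.3} is $\tfrac{\delta_1^2\delta_2^2\epsilon_1^2}{512}T'$ rather than $\tfrac{\delta_1^2\delta_2^2\epsilon_1^2}{32}T'$; the parameter choice $\eta<\tfrac{\delta_1^2\delta_2^2\epsilon_1^2}{512 k_0}$ is calibrated exactly to this weaker surplus, so the conclusion is unaffected, but the intermediate constant should be corrected. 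On the positive side, your explicit verification of~\eqref{sp} (showing $\beta_\Lambda\ge 3/4$ implies $\hat{\mathcal L}^N_a H\le\tfrac43\mathcal M_{\Lambda,a}H$ and checking $\tfrac43 A_0(E\kappa^{-N}+1)\le E$ from $\kappa^N>4A_0$ and $E\ge 2A_0+1$) usefully fills in a step the paper dismisses as a ``direct calculation.''
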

 The second part, \eqref{sp} \color{black} is a direct calculation using the Lasota-Yorke bounds (Lemma \ref{Lasota-Yorke-Lemma}). We now address the first part.
The actual procedure is to make a clever choice of $\Lambda$ such that 
\begin{equation} \label{hardpartDolg} | \hat{\mathcal{L}}_{a + ib,\ell }^{N_0}  h | \leq  \hat{\mathcal{L}}_{a}^{N_0} (\beta_\Lambda H) \end{equation}
pointwise.  Once we do this, it will be easy to deduce \eqref{518} by applying $\hat{\mathcal{L}}^{n_1}_a$ to both sides, recalling $N=N_0+n_1$. 
{} 
The next Lemma will guide us in that choice. Recall the sections $v_1, v_2$ of $f^{N_0}$ chosen in \eqref{choosesections}.

\begin{lemma} \label{fact8.2} Define $\Delta_1$ and $\Delta_2$ on $U_0$ as follows:
\begin{equation} \label{equation3.13}\Delta_1(x) =\tfrac{ \left| \sum_{k= 1, 2} e^{-(\delta + s)\tau_{N_0}(v_k(x)) + i\ell \theta_{N_0}(v_k(x))}h(v_k(x))h_\delta(v_k(x))  \right| }{ (1 - k_0 \eta) e^{-(\delta + a)\tau_{N_0}(v_1(x))}H(v_1(x))h_\delta(v_1(x))  + e^{-(\delta + a)\tau_{N_0}(v_2(x)) }H(v_2(x))h_\delta(v_2(x)) } \end{equation}
\begin{equation}  \label{equation3.14} \Delta_2(x) =\tfrac{\left|  \sum_{k = 1, 2} e^{-(\delta + s)\tau_{N_0}(v_k(x)) + i\ell \theta_{N_0}(v_k(x))}h(v_k(x))h_\delta(v_k(x)) \right| }{ e^{-(\delta + a)\tau_{N_0}(v_1(x)) }H(v_1(x))h_\delta(v_1(x))  + (1 -k_0 \eta) e^{-(\delta + a)\tau_{N_0}(v_2(x)) }H(v_2(x))h_\delta(v_2(x)) } \end{equation}
For each $(r, k)  \in \lbrace 1, \ldots , r_0 \rbrace \times  \lbrace 1, \ldots , k_0 \rbrace$, at least one of $\Delta_1, \Delta_2$ is less than or equal to one on at least one of the discs $B_{2\tilde \epsilon \delta_3}(x^k_r)$ or $ B_{2\tilde \epsilon \delta_3}(y^k_r)$. 
\end{lemma}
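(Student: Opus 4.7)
The plan is to split into cases via Lemma~\ref{quarterslemma}. Fix $(r,k)$ and set $R = |b| + |\ell|$. Both candidate discs $B_{2\tilde\epsilon\delta_3}(x_r^k)$ and $B_{2\tilde\epsilon\delta_3}(y_r^k)$ sit inside $B_{10\tilde\epsilon}(x_r^k)$, since $|y_r^k - x_r^k| < 5\tilde\epsilon$ and $\delta_3 < 1$. For each $j \in \{1,2\}$ I would apply Lemma~\ref{quarterslemma} to $(h\circ v_j,\, H\circ v_j)$ on $B_{10\tilde\epsilon}(x_r^k)$, landing in one of two regimes on that ball: the \emph{small case} $|h\circ v_j| \le \tfrac{3}{4}\, H\circ v_j$, or the \emph{large case} $|h\circ v_j| \ge \tfrac{1}{4}\, H\circ v_j$.

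Writing $Z_j := e^{-(\delta+a)\tau_{N_0}(v_j(\cdot))}\, H(v_j(\cdot))\, h_\delta(v_j(\cdot))$, if the small case holds for some $j$ then the triangle inequality applied to the two summands of the numerator of $\Delta_j$ (using $|h| \le H$ on the other summand) gives the bound $\tfrac{3}{4} Z_j + Z_{3-j}$, while the denominator equals $(1 - k_0\eta) Z_j + Z_{3-j}$. Since $\eta < 1/(4 k_0)$, one gets $\Delta_j \le 1$ on both candidate discs, and we are finished.

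The substantive case is when the large regime holds for \emph{both} $j = 1, 2$. The idea is to exploit relative oscillation between the two complex summands
\[
W_j(x) := h(v_j(x))\, e^{-(\delta+s)\tau_{N_0}(v_j(x)) + i\ell\theta_{N_0}(v_j(x))}\, h_\delta(v_j(x))
\]
in the numerator. Their relative phase $\Psi := \arg(W_1/W_2)$ splits as
\[
\Psi = \bigl[\arg(h\circ v_1) - \arg(h\circ v_2)\bigr] + \bigl[-b\tilde\tau + \ell\tilde\theta\bigr].
\]
Lemma~\ref{NLI2} delivers $|\nabla(-b\tilde\tau + \ell\tilde\theta)| \ge \delta_2 R/\sqrt{2}$, while in the large regime $|\nabla\arg(h\circ v_j)| \le |\nabla(h\circ v_j)|/|h\circ v_j| \le 4ER/\kappa^{N_0}$, which is negligible by the choice of $N_0$. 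Hence $\nabla\Psi$ agrees up to a small perturbation with the NLI vector, and $|\nabla\Psi| \ge \delta_2 R/4$. By the NCP (Theorem~\ref{ncp}), applied in the unit direction $\nabla\Psi(x_r^k)/|\nabla\Psi(x_r^k)|$, the partner $y_r^k$ can be chosen with $|\langle y_r^k - x_r^k,\, \nabla\Psi(x_r^k) \rangle| \ge 5\delta_1\tilde\epsilon\cdot|\nabla\Psi(x_r^k)|$. A first-order Taylor expansion (remainder $O(\tilde\epsilon^2 R) = O(\epsilon_1^2/R)$, negligible) then yields $|\Psi(y_r^k) - \Psi(x_r^k)| \ge c\,\delta_1\delta_2\epsilon_1$ for an absolute $c > 0$. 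Thus at least one of $\Psi(x_r^k), \Psi(y_r^k)$, say $z$, sits at distance $\theta_0 \ge c\delta_1\delta_2\epsilon_1/4$ from $2\pi\z$; combined with $|\nabla\Psi| = O(R)$ and $\delta_3 < \delta_1\delta_2/(4E)$, $\Psi$ moves by only $O(\delta_3\epsilon_1) \ll \theta_0$ on $B_{2\tilde\epsilon\delta_3}(z)$ and stays $\theta_0/2$-separated from $2\pi\z$ throughout. Applying Lemma~\ref{lemma2.3} pointwise then bounds the numerator by $(1 - \theta_0^2/32)\min\{|W_1|,|W_2|\} + \max\{|W_1|,|W_2|\}$, which is at most $(1 - k_0\eta)Z_1 + Z_2$ or $(1 - k_0\eta)Z_2 + Z_1$ by the choice $\eta < \delta_1^2\delta_2^2\epsilon_1^2/(512 k_0)$. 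Accordingly $\Delta_1 \le 1$ or $\Delta_2 \le 1$ on that disc.

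The main obstacle is the coordination of constants: the $\arg(h\circ v_j)$ perturbation must not wash out the NLI lower bound on $|\nabla\Psi|$; the Taylor remainder must not wash out the NCP gap of size $5\delta_1\tilde\epsilon$; and the residual oscillation of $\Psi$ across $B_{2\tilde\epsilon\delta_3}(z)$ must stay well below $\theta_0$, so that the contraction factor $1 - \theta_0^2/32$ from Lemma~\ref{lemma2.3} is more stringent than $1 - k_0\eta$. Precisely these constraints are baked into the numerology of Section~\ref{Section3}. For the parallel statement when $J \subset \br$ and $\ell = 0$, I would substitute Proposition~\ref{realNLI} for Lemma~\ref{NLI2} and Theorem~\ref{ncp}(2) for Theorem~\ref{ncp}(1); the argument is otherwise identical in one real dimension.
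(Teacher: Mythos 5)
Your overall structure matches the paper's: a case split via Lemma \ref{quarterslemma}, the easy triangle inequality bound in the small regime, and the phase--oscillation argument (NLI to get a lower bound on the phase gradient, NCP to separate partner points, Taylor estimate, then Lemma \ref{lemma2.3}) in the double-large regime. But two places in the sketch, as written, have gaps.

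First, the NCP cannot be applied ``in the unit direction $\nabla\Psi(x_r^k)/|\nabla\Psi(x_r^k)|$'' because the partner points $y_r^k$ are fixed \emph{before} $h$ is specified: they enter the definition of $\beta_\Lambda$ and hence of the Dolgopyat operators $\mathcal{M}_{\Lambda,a}$, which must be independent of $h$. The paper applies NCP to the $h$-independent vector $\hat w_r^k = (b\nabla\tilde\tau + \ell\nabla\tilde\theta)(x_r^k)/|\cdot|$, and then shows that the $\arg(h\circ v_j)$ contribution to $\nabla\text{Arg}$ is small enough (via the large-regime bound $|\nabla\arg(h\circ v_j)| \le 4E(|b|+|\ell|)/(c_0\kappa^{N_0})$) that the separation $|\langle y_r^k - x_r^k,\hat w_r^k\rangle| > 5\delta_1\tilde\epsilon$ is not washed out. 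Your sketch conflates the two directions; it would need to be rewritten so that NCP is invoked with $\hat w_r^k$ and the $h$-dependent perturbation is absorbed afterward. Relatedly, the ``first-order Taylor expansion'' of $\Psi$ needs to be split: $\tilde\tau,\tilde\theta$ have controlled $C^2$ norm (from Lemma \ref{NLI2}), but $\arg(h\circ v_j)$ is only $C^1$, so one Taylor-expands the NLI part and bounds the $\arg(h\circ v_j)$ increment by its Lipschitz constant times the distance, exactly as the paper does.

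Second, the last step --- passing from the Lemma \ref{lemma2.3} bound $(1-\theta_0^2/8)\min|W_j| + \max|W_j|$ to a comparison with the $\Delta_i$ denominator --- is not immediate. Which of $|W_1(x)|, |W_2(x)|$ is smaller can in principle change with $x$, yet the lemma requires a single choice of $\Delta_1$ or $\Delta_2$ valid on the whole disc $B_{2\delta_3\tilde\epsilon}(y_r^k)$. The paper resolves this via the bounded logarithmic derivative of $Z_j(x) := e^{-(\delta+a)\tau_{N_0}(v_j(x))}H(v_j(x))h_\delta(v_j(x))$: the ordering $Z_1 \le Z_2$ (say) at the center $y_r^k$ propagates, up to a factor of $2$, across the whole disc (since $\epsilon_1$ is small), which gives $4k_0\eta Z_2(x) \ge k_0\eta Z_1(x)$ and lets one move the contraction factor from the $Z_2$ slot to the $Z_1$ slot when the pointwise ordering of $|W_1|,|W_2|$ disagrees with that of $Z_1,Z_2$. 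Your ``which is at most $(1-k_0\eta)Z_1 + Z_2$ or $(1-k_0\eta)Z_2 + Z_1$'' hides this issue; you would need to supply the Lipschitz/doubling argument on $Z_j$ to complete the proof.
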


\begin{proof} Fix $(r, k)  \in \lbrace 1, \ldots , r_0 \rbrace \times \lbrace 1, \ldots , k_0 \rbrace$, and consider the alternative described in Lemma \ref{quarterslemma} for the sections $v_1, v_2$. If the first alternative holds on $B_{10\tilde\epsilon}(x_r^k)$ for either $v_1$ or $v_2$, then we are finished, so we shall assume the converse. 
For $x \in B_{10\tilde\epsilon}(x_r^k)$, we write $\text{Arg}(x)$ for the argument (taking  values in $[-\pi, \pi)$ of the ratio of the summands in the numerator of $\Delta_j$:
\begin{eqnarray} \text{Arg}(x) &:=& \arg\left(\frac{e^{-ib\tau_{N_0}(v_1(x)) + i\ell \theta_{N_0}(v_1(x))}h(v_1(x))}{e^{- ib\tau_{N_0}(v_2(x)) + i\ell\theta_{N_0}(v_2(x))}h(v_2(x))}\right)\end{eqnarray}
with $s = a + ib$. Our aim is to exclude the possibility that $\text{Arg}(x)$ is small both near $x^k_r$ and near $y^k_r$; that will imply some cancellation in the numerator and give the lemma. 

We will do this simply by estimating derivatives from below.

\begin{remark} A priori one might worry that the $\arg(x)$ function has discontinuities where it jumps from $-\pi$ to $\pi$. However, a straightforward calculation using the Lipschitz bound \eqref{C1ofarg} and the small diameter of $B_{10 \tilde \epsilon}(x^k_r)$ establishes that the required bound \eqref{eq528} follows immediately if $|\text{Arg}(x) | > \frac{\pi}{2}$ anywhere on $B_{10 \tilde \epsilon}(x^k_r)$.
\end{remark} 
We therefore assume that $|\text{Arg}(x)| \leq \pi/2$ on that ball, and that the arg function is correspondingly continuous. Recall the functions $\tilde \tau, \tilde \theta$ from Lemma \ref{NLI2} and calculate

\begin{eqnarray} \text{Arg}(x) &=&b\tilde \tau(x) + \ell \tilde \theta(x) + \arg\left(\frac{h(v_1(x))}{h(v_2(x))}\right) \\
&=&b\tilde \tau(x) + \ell \tilde \theta(x) + \arg(h(v_1(x))) - \arg(h(v_2(x)))  \end{eqnarray} 
where we think of these numbers as elements of $S^1 = \mathbb{R} / 2\pi \mathbb{Z}$ where necessary.  Direct calculation yields
\begin{equation}\left| \nabla \arg(h(v_j(x)))\right| \leq \frac{4E(|b| + |\ell|)}{c_0\kappa^{N_0}} \end{equation}
for $j = 1, 2$. 

Applying Taylor's expansion to the map $y\mapsto b\tilde \tau(y) + \ell \tilde \theta(y)$ at $x^k_r$, and using
the condition $\|(\tilde \tau, \tilde \theta)\|_{C^2}\le 1/(2\delta_2)$ by the NLI condition,
we get
\begin{align*}& | b\tilde \tau(y^k_r) + \ell \tilde \theta(y^k_r)  -b\tilde \tau(x^k_r )- \ell \tilde \theta(x^k_r) |  \\
&\geq
 | \langle y^k_r - x^k_r, \hat w^k_r \rangle| |w^k_r|  - \frac{(|b| + |\ell |) |x^k_r - y^k_r|^2}{\delta_2}.
\end{align*}

Therefore \begin{eqnarray*}&& |\text{Arg}(y^k_r) - \text{Arg}(x^k_r)|  \\ & \geq& | b\tilde \tau(y^k_r) + \ell \tilde \theta(y^k_r)  - b\tilde \tau(x^k_r )- \ell \tilde \theta(x^k_r) |   - |y^k_r - x^k_r| \left(\frac{8E(|b| + |\ell |)}{c_0\kappa^{N_0}}  \right)\\
&\geq&  | \langle y^k_r - x^k_r, \hat w^k_r \rangle| |w^k_r|  - \frac{(|b| + |\ell |) |x^k_r - y^k_r|^2}{\delta_2}
- |y^k_r- x^k_r| \left(\frac{8E(|b| + |\ell |)}{c_0\kappa^{N_0}}  \right)\\
&\geq& \frac{5\delta_1\delta_2\tilde\epsilon(|b| + |\ell |)}{2} - \frac{(|b| + |\ell |) |x^k_r - y^k_r|^2}{\delta_2} -
|y^k_r - x^k_r| \left(\frac{8E(|b| + |\ell |)}{c_0\kappa^{N_0}} \right)\\
&\geq&  \frac{5\delta_1\delta_2\epsilon_1}{2} - \frac{25 \epsilon_1^2}{\delta_2}  -\frac{40E\epsilon_1}{c_0\kappa^{N_0}}  \\
&\geq& 2\delta_1\delta_2\epsilon_1 . \end{eqnarray*} 
where we used the NCP condition \eqref{xi} for the third inequality.

The next step is to show that $|\text{Arg}(y) - \text{Arg}(x)| > \delta_1\delta_2\epsilon_1 $ for every $x \in B_{2\delta_3 \tilde \epsilon}(x^k_r)$ and $y \in B_{2\delta_3 \tilde \epsilon}(y^k_r)$. First estimate the $C^1$ norm of $\text{Arg} (x)$:
{} 

\be |\nabla \text{Arg}(x) | \leq  A_0(|b| + |\ell |) +   \frac{8E(|b| + |\ell |)}{c_0\kappa^{N_0}}\leq E (|b|+ |\ell |). \label{C1ofarg} \ee
This gives
$$ |\text{Arg}(x^k_r) - \text{Arg}(x)|\le 2 E\delta_3 \tilde \epsilon  (|b| + |\ell |)\le  \frac{\delta_1\delta_2\epsilon_1}{2} . $$
A similar calculation shows $|\text{Arg}(y^k_r) - \text{Arg}(y)| \leq \delta_1\delta_2\epsilon_1/2$. 
 It follows that for every $x \in B_{2\delta_3 \tilde \epsilon}(x^k_r)$ and $y \in B_{2\delta_3 \tilde \epsilon}(y^k_r)$, we have {} 

\begin{eqnarray*}&& |\text{Arg}(y) - \text{Arg}(x)| \\ &>&|\text{Arg}(y^k_r) - \text{Arg}(x^k_r)|  - |\text{Arg}(y) - \text{Arg}(y^k_r)| - |\text{Arg}(x) - \text{Arg}(x^k_r)| \\&\geq&\delta_1\delta_2\epsilon_1 \end{eqnarray*}
as expected. We now claim that one of the following holds:

\begin{enumerate}
\item $\text{Arg}(x) > \frac{\delta_1\delta_2\epsilon_1}{4}$ on $ B_{2\delta_3 \tilde \epsilon}(x^k_r)$ or
\item $\text{Arg}(y) > \frac{\delta_1\delta_2\epsilon_1}{4}$ on $ B_{2\delta_3 \tilde \epsilon}(y^k_r)$.
\end{enumerate} 
{}

To see this, suppose that the first statement fails; but then we may choose $ x\in B_{2\delta_3 \tilde \epsilon}(x^k_r)$ with  $\text{Arg}(x) \leq \frac{\delta_1\delta_2\epsilon_1}{4}$. But then 
\be |\text{Arg}(y)| \geq |\text{Arg}(x)   - \text{Arg}(y)| -  |\text{Arg}(x)| \geq  \frac{\delta_1\delta_2\epsilon_1}{2}    \label{eq528} \ee
for all $y \in B_{2\delta_3 \tilde \epsilon}(y^k_r)$.

This claim implies Lemma \ref{fact8.2} by means of Lemma \ref{lemma2.3}. 
 Suppose, for example, that the argument condition holds on $B_{2\delta_3 \tilde \epsilon}(y^k_r)$, that $ x \in B_{2\delta_3 \tilde \epsilon}(y^k_r)$ has 
$$ | e^{-(\delta + a)\tau_{N_0}(v_2(x))}h(v_2(x)) h_\delta(v_2(x))  | \leq |e^{-(\delta + a)\tau_{N_0}(v_1(x))}h(v_1(x))h_\delta(v_1(x)) |, $$
and that 
$$e^{-(\delta + a)\tau_{N_0}(v_1(x)) }H(v_1(y^k_r)) h_\delta(v_1(y^k_r)) \leq e^{-(\delta + a)\tau_{N_0}(v_2(x)) }H(v_2(y^k_r))h_\delta(v_2(y^k_r)) .$$ 
Then Lemma \ref{lemma2.3} gives 
\begin{eqnarray*} &&\left| \sum_{k= 1, 2} e^{-(\delta + s)\tau_{N_0}(v_k(x)) + i\ell \theta_{N_0}(v_k(x))}h(v_k(x)) h_\delta(v_k(x))  \right| \\
&\leq& e^{-(\delta + a)\tau_{N_0}(v_1(x))}|h(v_1(x))|h_\delta(v_1(x))  + \eta'    e^{-(\delta + a)\tau_{N_0}(v_2(x))}|h(v_2(x))| h_\delta(v_2(x))   \\
&\leq&  e^{-(\delta + a)\tau_{N_0}(v_1(x))}H(v_1(x))h_\delta(v_1(x))  + \eta'    e^{-(\delta + a)\tau_{N_0}(v_2(x))}H(v_2(x))h_\delta(v_2(x))    \end{eqnarray*}
where $\eta':= \left( 1 - 4k_0\eta \right) $.
Now we notice that the logarithmic derivative of
$$ e^{-(\delta + a)\tau_{N_0}(v_j(x))}H(v_j(x))h_\delta(v_j(x)) $$
is bounded by $E(|b| + |\ell|)$, so the small choice of $\epsilon_1$ yields
\begin{eqnarray*} &&4k_0 \eta   e^{-(\delta + a)\tau_{N_0}(v_2(x))}H(v_2(x)) h_\delta(v_2(x))\\
  &\geq& 2k_0\eta  e^{-(\delta + a)\tau_{N_0}(v_2(y^k_r))}H(v_2(y^k_r)) h_\delta(v_2(y^k_r)) \\ &\geq& 2 k_0\eta  e^{-(\delta + a)\tau_{N_0}(v_1(y^k_r))}H(v_1(y^k_r))h_\delta(v_1(y^k_r))  \\&\geq&   k_0\eta  e^{-(\delta + a)\tau_{N_0}(v_1(x))}H(v_1(x))h_\delta(v_1(x)) . \end{eqnarray*}
Hence
\begin{multline}\left| \sum_{k= 1, 2} e^{-(\delta + s)\tau_{N_0}(v_k(x)) + i\ell \theta_{N_0}(v_k(x))}h(v_k(x))  \right| \\ \leq   \left( 1 - k_0\eta \right) e^{-(\delta + a)\tau_{N_0}(v_1(x))}H(v_1(x)) +   e^{-(\delta + a)\tau_{N_0}(v_2(x))}H(v_2(x))  \end{multline}
on $B_{2\delta_3 \tilde \epsilon}(y^k_r)$. The other cases follow similarly.

%\color{red} actully it doesn't quite for some silly reason I shan't go into now: we need to 1)divide $\eta$ by two and 2) xplain why. Which I shall do starting tomorrow but now I need to head home...\color{blue}
 \end{proof}

\begin{proof}[Proof of Theorem \ref{fact7.8}] We need to construct an appropriate set $\Lambda \in \mathcal{F}$. For each $(r, k)  \in \lbrace 1 ,\ldots , r_0 \rbrace \times \lbrace 1 ,\ldots , k_0 \rbrace$ proceed as follows:

\begin{enumerate}
\item If $\Delta_1 \leq 1$ on $B_{2\delta_3\tilde \epsilon}(x^k_r)$,then include $v_1(x^k_r)$ in $\Lambda$, otherwise
\item  If $\Delta_2 \leq 1$ on $B_{2\delta_3\tilde \epsilon}(x^k_r)$, then include $v_2(x^k_r)$ in $\Lambda$, otherwise
\item  If $\Delta_1 \leq 1$ on $B_{2\delta_3\tilde \epsilon}(y^k_r)$, then include $v_1(y^k_r)$ in $\Lambda$, otherwise
\item  If $\Delta_2 \leq 1$ on $B_{2\delta_3\tilde \epsilon}(y^k_r)$, then include $v_2(y^k_r)$ in $\Lambda$.
\end{enumerate}
{}

%\begin{enumerate}
%\item If $\Delta_1 \leq 1$ on $B_{\delta_3\tilde \epsilon}(x^r_j)$ then include $(1, 1, j, r)$ in $\Lambda$, otherwise
%\item  If $\Delta_2 \leq 1$ on $B_{\delta_3\tilde \epsilon}(x^r_j)$ then include $(2, 1, j, r)$ in $\Lambda$, otherwise
%\item  If $\Delta_1 \leq 1$ on $B_{\delta_3\tilde \epsilon}(y^r_j)$ then include $(1, 2, j, r)$ in $\Lambda$, otherwise
%\item  If $\Delta_2 \leq 1$ on $B_{\delta_3\tilde \epsilon}(y^r_j)$ then include $(2, 2, j, r)$ in $\Lambda$.
%\end{enumerate}
By Lemma \ref{fact8.2}, at least one of these situations occurs for each $(r,k)$; we therefore obtain a full set $\Lambda \in \mathcal{F}$. The inequality \eqref{hardpartDolg} then follows from the inequalities \eqref{equation3.13} or \eqref{equation3.14} as applicable. 
In order to deduce \eqref{518} from \eqref{hardpartDolg}, it suffices to observe that
$$|\hat{\mathcal L}_{s,\ell} ^{N}h| \leq \hat{\mathcal L}^{n_1}_a \left(  | \hat{\mathcal L}_{s, \ell}^{N_0}h| \right) \leq 
 \hat{\mathcal L}^{n_1}_a\left(  \hat{\mathcal L}_{a}^{N_0}(\beta_\Lambda H) \right)  =
\hat{\mathcal L}_a^N(\beta_\Lambda H).$$

%by repeatedly using
%$|\hat{\mathcal L}_{s}h| \leq \hat{\mathcal L}_a |h|$

\end{proof}

\begin{proof}[Proof of Theorem \ref{decayestimate}] The deduction of Theorem \ref{decayestimate} from Theorems \ref{fact7.7} and \ref{fact7.8} is now standard as in \cite[Section 5]{Na}. For completeness we recall the argument. Fix $a, b\in \mathbb{R}$ and $\ell \in \mathbb{Z}$ with $| a - \delta| < a_0$ as in Theorem \ref{fact7.8} and $|b| + |\ell| > 1$. Let $h \in C^1(U)$ . 

Using Theorems \ref{fact7.7} and \ref{fact7.8} we inductively choose functions 
$$H_k \in K_{E(|b| + |\ell|)} (U)\subset C^1(U, \mathbb{R})$$ with the properties:
\begin{enumerate}
\item $H_0$ is the constant function $||h||_{(|b| + |\ell|)}$,
\item $H_{k+1} = \mathcal{M}_{\Lambda_k, a}H_k$ for some  $\Lambda_k \in \mathcal{F}$,
\item $|\hat{\mathcal{L}}^{kN}_{s, \ell} h| \leq H_k$ pointwise, and 
\item $|\nabla \hat{\mathcal{L}}^{kN}_{s, \ell} h| \leq E(|b| + |\ell|) H_k$ pointwise. 
 \end{enumerate}
 From this it follows by Theorem \ref{fact7.7} that 
 $$||\hat{\mathcal{L}}^{kN}_{s, \ell} h ||_{L^2(\nu)} \leq (1 - \epsilon_2)^k ||h||_{(|b| + |\ell|)}$$
   for any $h\in C^1(U)$ and any $k \geq 0$. For general $n = kN + r$ with $0 \leq r \leq N -1 $, we then have
 \begin{multline} ||\hat{\mathcal{L}}^n_{s, \ell} h ||_{L^2(\nu)} = ||\hat{\mathcal{L}}^{kN}_{s, \ell} \hat{\mathcal{L}}^{r}_{s, \ell}h ||_{L^2(\nu)}\leq  (1 - \epsilon_2)^k ||\hat{\mathcal{L}}^{r}_{s, \ell}h||_{(|b| + |\ell|)}\\
  \leq  (1 - \epsilon)^n \frac{ (||\hat{\mathcal{L}}_{s, \ell}||_{(|b| + |\ell|)} + 1)^N\color{black}}{(1 - \epsilon)^N} ||h||_{(|b| + |\ell|)}  \end{multline}
where we have chosen $\epsilon > 0$ such that $(1 - \epsilon)^N \geq (1 - \epsilon_2)$.  Since there is a uniform bound on  $||\hat{\mathcal{L}}^r_{s, \ell}||_{(|b| + |\ell|)}$, $0\le r\le N-1$, independent of $b$ and $\ell$ we are now finished. \end{proof} 

%\begin{definition}
%We say that $\tau$ on $J$ is lattice if there is a function $L: J \rightarrow m \mathbb{Z}$ for some $m\in \br$
% and a Lipschitz function $u : J \rightarrow \br$ such that
%$$ \tau = L + u - u\circ f$$
%We say that $\tau$ is non-lattice if it is not a lattice. 
%\end{definition}
%The significance of this non-lattice condition is that it provides weak mixing for the associated suspension flow. Associated to $\Sigma^+$ and the matrix $M$ from Section \ref{Markovpartitions} we also have the two sided shift space
%$$\Sigma := \lbrace (\ldots , \omega_{-1}, \omega_0, \omega_1 \ldots ) : \omega_j \in \lbrace 1, \ldots , k_0 \rbrace \mbox{ and } M_{\omega_j, \omega_{j+1}} = 1 \mbox{ for all } j \rbrace. $$
%Clearly $\Sigma$ projects to $\Sigma^+$ by forgetting the negative time coordinates, so we can think of $\tau$ as a function on $\Sigma$. We then form the suspension space 
%$$\Sigma^\tau := \Sigma \times \mathbb{R} / (x, t+\tau(x) ) \sim_\tau (\sigma(x), t).$$ This has the natural suspension flow $(x, t) \mapsto (x, t+ s)$. The non-lattice condition for $\tau$ implies the weak mixing of the suspension flow, and allows application of results from \cite{PP}. In particular this gives the lead terms for the periodic point counting problem as in Theorem \ref{main1}. 

%A similar approach might be expected to give the ineffective equidistribution of holonomies. This is again addressed in \cite{PP}, while required input on weak mixing could be obtained from Brin's argument \cite{Br}.

\section{$L$-functions, transfer operators and counting estimates for hyperbolic polynomials}  \label{Zetasection}

In this section we describe the relationship between transfer operators and zeta functions. This relationship, together with the bounds we have  established, will allow us to deduce our main equidistribution theorem. The approach described in this section is well established; the interested reader can find a clear account in \cite{PS1}. We include an outline of the argument for the  convenience of readers.

We recall the shift space $\Sigma^+$ from Section \ref{Markovpartitions} together with its shift operator $\sigma$. 
We have a map $\pi:\Sigma^+ \to J$ given by 
$$x=(i_0, i_1, i_2, \cdots ) \mapsto \cap_{j=0}^{\infty}  f^{-j} (P_{i_j});$$
it is a semiconjugacy between $\sigma$ and $f$. Abusing notation we think of $\tau, \tau_n, \alpha, \alpha_n$ as functions on $\Sigma^+$ by pulling them back via $\pi$.
%Any point in an interior $\cup P_i^\circ$ has one pre-image and any point on $\cup \partial (P_i)$ has less than $K_0$ pre images for some fixed $K_0$.
%Any pre images of cycles of $f$ are cycles of $\sigma$ with the same multiplier,
%and any cycle in $J$ which is not in
%$\{\ell \alpha: \ell =1, 2,\cdots \}$ has one pre-image and a cycle in $\{\ell \alpha\}$ has less than $K_0$ pre images of cycles.

Define the symbolic zeta function, also called the Ruelle zeta function for $\Sigma^+$, by
$$\tilde \zeta(s)=\exp \left(\sum_{n=1}^{\infty}\frac{1}{n} \sum_{\sigma^n x=x} e^{-s \tau_n(x)}\right)=\prod_{\hx \in \tilde \P} (1-e^{-|\lambda(\hx)| s})^{-1}$$
where $\tilde \P$ is the collection of primitive periodic orbits of $\sigma$.  This is convergent and analytic for $\Re(s) >\delta$.  
%Since $\tau$ has the non-lattice property by Corollary \ref{taunonlattice}, work of Parry-Pollicott \cite[Theorem 6.3]{PP} shows that $\tilde \zeta(s)$ is analytic and non-vanishing on $\Re(s) \geq \delta$ except for a simple pole at $s = \delta$. 
%Moreover, when $\tau$ is a non-lattice,
%it has a non-vanishing analytic extension to $\Re(s)\ge \delta$, except for a simple pole at $s=\delta$ (Theorem 6.3, PP).

For each $\ell \in \z$, let $\chi_\ell (x)=x^\ell $ be a unitary character $S^1\to S^1$, and define 
$$\tilde \zeta(s, \ell)=\exp \left(\sum_{n=1}^{\infty}\frac{1}{n} \sum_{\sigma^n x=x} \chi_\ell( \alpha_n(x)  )  e^{-s \tau_n(x)}\right).$$
By comparison with $\tilde \zeta(s)$, $\tilde \zeta(s,\ell)$ converges for $\Re(s)>\delta$.
%For each $\ell \in \z$, let $\chi_\ell (x)=x^\ell $ be a unitary character $S^1\to S^1$.
%For $\ell =0$, note that $$\tilde \zeta(s, 0)=\zeta(s).$$

Let $$\tilde Z_n(s, \ell):=\sum_{\sigma^n x=x} \chi_\ell(\alpha_n(x)) e^{-s \tau_n(x)}$$ so that
$$\tilde \zeta(s,\ell)=\exp \left( \sum_{n=1}^\infty \frac{1}{n} \tilde Z_n(s,\ell) \right).$$

These functions $\tilde Z_n$ are related  to our transfer operators by the following proposition, which is originally due to Ruelle \cite{Ru3}
(see also \cite{PS1},  \cite[Appendix]{Na}).

For each $1\le j\le k_0$, let $\phi_j\in C^1(U)$ 
be the characteristic function of $U_j$, recalling that $ U$ is the {\it disjoint} union of $U_j$.
\begin {prop} \label{zn} Fix $a_0>0, b_0>0$. 
There exists $x_j\in P_j$, $j=1, \cdots, k_0$, such that for any $\e>0$,
there exists $C_\e>0$ such that for all $n\ge 2$ and for any $\ell\in \z$,
\begin{multline*} \left|\tilde Z_n(s, \ell )-\sum_{j=1}^{k_0} \L_{s,\ell } ^n (\phi_j)(x_j) \right| \le 
\\ C_\e ( |\Im (s)| + |\ell | ) \sum_{m=2}^n \|\L_{s,\ell}^{n-m}\|_{C^1} \left(\kappa^{-1} e^{\e + P(-\Re(s)\tau)}\right)^ m\end{multline*}
for all $| \Im (s)| + |\ell |>b_0$ and $|\Re(s)-\delta | \le a_0$ (here $\kappa>1$ is the expansion rate of $f$).
\end{prop}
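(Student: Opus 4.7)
The plan is to expand both sides as sums over admissible length-$n$ cyclic multi-indices. For such a cyclic index $I$, the associated section $g_I$ maps $U_j\to U_j$ for the common endpoint $j$ and is a strict contraction (by hyperbolicity) with unique fixed point $y_I\in P_j$. Setting
\[ W_I(z):=e^{-s\tau_n(g_Iz)}\chi_\ell(\alpha_n(g_Iz)), \quad z\in U_j, \]
one has $W_I(y_I)=e^{-s\tau_n(y_I)}\chi_\ell(\alpha_n(y_I))$, so $\tilde Z_n(s,\ell)=\sum_I W_I(y_I)$. On the other hand, for any $x_j\in\mathrm{int}\,P_j$, the disjointness of the cover $U=\sqcup_j U_j$ together with $\phi_j=\mathbf{1}_{U_j}$ yields $\sum_j\mathcal{L}^n_{s,\ell}\phi_j(x_j)=\sum_I W_I(x_{j(I)})$, where $j(I)$ is the common endpoint of $I$. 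Thus the error to estimate is $\sum_I\bigl(W_I(y_I)-W_I(x_{j(I)})\bigr)$.

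To produce the stated form with a sum over $m\in\{2,\ldots,n\}$, I would telescope across the composition length of $g_I$. For each scale $m$, factor $g_I=g_{I_1}\circ g_{I_2}$ with $|I_1|=n-m$ and $|I_2|=m$, and use the multiplicative decomposition
\[ W_I(z) = W^{(n-m)}_{I_1}(g_{I_2}z)\cdot W^{(m)}_{I_2}(z), \]
where $W^{(k)}_J(w):=e^{-s\tau_k(g_Jw)}\chi_\ell(\alpha_k(g_Jw))$. After summing over $I$ at fixed scale $m$ and interchanging orders of summation, the scale-$m$ contribution takes the form
\[ \sum_{I_2}W^{(m)}_{I_2}(x_{j(I)})\bigl[\bigl(\mathcal{L}^{n-m}_{s,\ell}\psi_m\bigr)(g_{I_2}x_{j(I)})-\bigl(\mathcal{L}^{n-m}_{s,\ell}\psi_m\bigr)(z_{I_2})\bigr], \]
where $z_{I_2}\in U$ is the fixed point of a cyclic shift of $g_I$ (related to $y_I$ by $z_{I_2}=g_{I_2}y_I$) and $\psi_m\in C^1(U)$ is an auxiliary test function coming from the gradient of the oscillatory factors, with $\|\psi_m\|_{C^1}=O(|\Im s|+|\ell|)$ by Koebe's distortion estimate (condition (4) of Section~\ref{Markovpartitions}).

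Three bounds then combine at each scale $m$: (a) the mean value theorem applied to $\mathcal{L}^{n-m}_{s,\ell}\psi_m$ bounds the scale-$m$ difference by $\|\mathcal{L}^{n-m}_{s,\ell}\|_{C^1}\cdot\|\psi_m\|_{C^1}\cdot|g_{I_2}x_{j(I)}-z_{I_2}|$, with the distance at most $C\kappa^{-m}$ from the contraction estimate $\|g_{I_2}'\|_\infty\leq C\kappa^{-m}$; (b) summing $\sum_{I_2}\|W^{(m)}_{I_2}\|_\infty$ is bounded by $\|\mathcal{L}^m_{\Re s,0}\mathbf{1}\|_\infty\leq C_\e e^{m(P(-\Re s\tau)+\e)}$ via sub-multiplicativity and the variational principle; (c) the Koebe distortion estimate ensures that $\tau_k\circ g_J$ and $\theta_k\circ g_J$ have uniformly bounded $C^1$ norms, so that the weight $|\Im s|+|\ell|$ from differentiating the oscillatory factors appears only once rather than being compounded at each scale. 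Summing over $m\in\{2,\ldots,n\}$ yields the claimed inequality.

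The main obstacle is the telescoping-plus-regrouping step: organizing the sum so that the scale-$m$ piece cleanly separates into the product $(|\Im s|+|\ell|)\|\mathcal{L}^{n-m}_{s,\ell}\|_{C^1}(\kappa^{-1}e^{P(-\Re s\tau)+\e})^m$ without accumulation of distortion constants and without spurious boundary terms between adjacent scales. The identity $\tau_k(g_Jw)=-\log|g_J'(w)|$, combined with the Koebe estimates of Section~\ref{Markovpartitions}, is essential for matching the contraction rate $\kappa^{-m}$ from inverse branches against the apparent growth of derivatives of the oscillatory factors on the contracted sets $g_J(U)$.
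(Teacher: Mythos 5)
The paper does not give its own proof of Proposition~\ref{zn}; it attributes the result to Ruelle~\cite{Ru3} and refers the reader to~\cite{PS1} and the Appendix of~\cite{Na}, so there is no in-paper argument to compare against. That said, your setup matches the approach in those references: both $\tilde Z_n$ and $\sum_j\mathcal{L}^n_{s,\ell}(\phi_j)(x_j)$ are correctly expanded as sums over cyclic admissible multi-indices, the error is correctly isolated as $\sum_I(W_I(y_I)-W_I(x_{j(I)}))$, the cocycle factorization $W_I(z)=W^{(n-m)}_{I_1}(g_{I_2}z)\,W^{(m)}_{I_2}(z)$ is right, and the three estimates you invoke (inverse-branch contraction at rate $\kappa^{-m}$; the Ruelle--Perron--Frobenius bound $\sum_{|I_2|=m}\|W^{(m)}_{I_2}\|_\infty\lesssim e^{m(P(-\Re(s)\tau)+\epsilon)}$; Koebe distortion keeping the $C^1$ norms of $\tau_k\circ g_J$ and $\theta_k\circ g_J$ bounded uniformly in $k$, so the factor $|\Im s|+|\ell|$ enters only once) are exactly the right ingredients.

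The gap is that the telescoping identity --- the step that actually produces the sum over $m=2,\dots,n$ --- is never written down, and your sketch of the scale-$m$ contribution is not well defined. You write it as $\sum_{I_2}W^{(m)}_{I_2}(x_{j(I)})\bigl[(\mathcal{L}^{n-m}_{s,\ell}\psi_m)(g_{I_2}x_{j(I)})-(\mathcal{L}^{n-m}_{s,\ell}\psi_m)(z_{I_2})\bigr]$, but $z_{I_2}=g_{I_2}y_I$ depends on all of $I$ (since $y_I$ is the fixed point of $g_I=g_{I_1}\circ g_{I_2}$), not on $I_2$ alone, so the outer sum over $I_2$ with the $I_1$-sum already absorbed into a single $\mathcal{L}^{n-m}_{s,\ell}\psi_m$ does not parse; and $\psi_m$ is left undefined, which is precisely where the $I_1$-sum would have to be recognized as $\mathcal{L}^{n-m}_{s,\ell}$ of a specific $C^1(U)$ function. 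The genuine difficulty --- which you rightly flag as ``the main obstacle'' --- is to exhibit a chain of intermediate points $p_0(I)=x_{j(I)},\,p_1(I),\dots,p_n(I)=y_I$, built from partial compositions and cyclic shifts of $I$, so that the level-$m$ difference regroups into $\mathcal{L}^{n-m}_{s,\ell}$ applied to a single $C^1$-controlled function (rather than a sum of such evaluated at points that still vary with $I_1$), with the base case absorbed and no cross-scale boundary terms left over. Without that identity the argument is a correct outline of the cited proofs, not a proof.
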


%With this relationship in hand we can now apply the spectral bounds Theorem \ref{trans} for transfer operators that were our main focus. 
%In fact we need the following slightly strengthened result. 

%\begin{thm} \label{trans} 
%For any $\e>0$, there exist $C_\e>0$, $0<\e_0<1$ and $ 0<\rho_\e <1$ such that for all $ \Re(s)\le \delta-\e_0$ and
%$|\Im (s)|+|\ell| > 1$,
%$$\|\L_{s,\chi_\ell }^n\|_{C^1}\le C_\e  ( |\Im (s)| + |\ell| )^{1+\e} \rho_\e^n .$$ 
%\end{thm}
%\begin{proof} This follows from Theorem \ref{decayestimate} by arguments of Ruelle. See \cite{PS1} and \cite[Section 5]{Na} for readable accounts. \end{proof}

In the rest of this section, 
suppose that $f$ is not conjugate to a monomial $x^{\pm d}$ for $d\in \N$.
If the Julia set of $f$ is contained in a circle, we only consider $\ell =0$; and otherwise, $\ell$ is any integer.

Fix $\e>0$.  Let $\e_0=\e_0(\e)>0$ be as given by  Theorem \ref{trans}.
Then by Proposition \ref{zn} and Theorem \ref{trans}, we have, for some $C_\e>1$,
$$|\tilde Z_n(s, \ell )|\le C_\e (| \Im(s)| + |\ell |)^{2+\e} \rho_\e^n$$
for all $|\Im (s)|+|\ell| >1$ and $|\Re(s)-\delta |<\e_0$; here we adjust $\rho_\epsilon$ and $\epsilon_0$ slightly if needed.  \color{black}

Since $$\log \tilde \zeta(s,\ell)=\sum_{n=1}^\infty \frac{1}{n}\tilde Z_n(s,\ell) ,$$
we deduce that for all $|\Im (s)|+ |\ell |> 1$ and $|\Re(s)-\delta|<\e_0$,
\be\label{zf} | \log \tilde \zeta(s, \ell)| \le C_\e  (| \Im(s)| + |\ell |)^{2+\e} \color{black}\ee
for some $C_\e>1$.

The following follows from the estimates \eqref{zf} for $\ell =0$ (cf. \cite{Na}):
 \begin{corollary} \label{taunonlattice}
The zeta function $\tilde \zeta (s)$ is non-vanishing and analytic on $\Re(s)\ge \delta$ except for the simple pole at $s=\delta$. \end{corollary}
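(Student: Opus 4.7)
The plan is to combine two regimes: a high-frequency analysis that follows directly from \eqref{zf}, and a classical spectral argument for the transfer operator $\mathcal{L}_s$ near $s=\delta$.

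For $\Re(s)>\delta$, both analyticity and non-vanishing are immediate from absolute convergence of the Euler product defining $\tilde\zeta$. For $|\Im(s)|>1$ and $|\Re(s)-\delta|<\e_0$, the bound \eqref{zf} with $\ell=0$ shows that the series $\log\tilde\zeta(s)=\sum_{n\ge 1}\frac{1}{n}\tilde Z_n(s)$ converges absolutely and uniformly on compacta (thanks to the $\rho_\e^n$ decay supplied by Proposition \ref{zn} combined with Theorem \ref{trans}(1)), so $\log\tilde\zeta$ extends analytically across the critical line. Consequently $\tilde\zeta=\exp(\log\tilde\zeta)$ is analytic and non-vanishing on the strip $\{|\Re(s)-\delta|<\e_0,\ |\Im(s)|>1\}$.

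It remains to analyze $s=\delta+it$ with $|t|\le 1$. Here I would invoke Ruelle's classical thermodynamic formalism: $\mathcal{L}_\delta$ has $1$ as a simple leading eigenvalue with spectral gap on $C^1(U)$, so analytic perturbation theory yields a unique leading eigenvalue $\lambda(s)$ of $\mathcal{L}_s$ analytic in a complex neighborhood of $\delta$, with $\lambda(\delta)=1$ and a uniform spectral gap. Writing locally $\tilde\zeta(s)=G(s)/(1-\lambda(s))$ with $G$ analytic and non-vanishing, the simple pole at $s=\delta$ comes from $\lambda'(\delta)\ne 0$ (strict convexity of the pressure on the real line). To finish, it suffices to prove the non-lattice statement $\lambda(\delta+it)\ne 1$ for all $0<|t|\le 1$.

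The main obstacle is this last step. A standard Perron--Frobenius argument shows that $\lambda(\delta+it)=1$ would force the existence of $h\in C^1(U)$ with $\mathcal{L}_{\delta+it}h=h$ and $|h|=h_\delta$; writing $h=h_\delta\, e^{i\phi}$ then yields a cohomological identity $t\tau=\phi\circ f-\phi\pmod{2\pi\mathbb{Z}}$ on $J$. Such an identity is ruled out by the NLI property of $\tau$ established in Theorem \ref{NLIholds}: when $J\subset\mathbb{R}$ this follows directly from Proposition \ref{realNLI}, which exhibits a point where $\tilde\tau$ has non-vanishing gradient; in the general case the $\tilde\tau$-component of the local diffeomorphism $(\tilde\tau,\tilde\theta)$ of Lemma \ref{NLI2} retains non-trivial variation on $J$, precluding $t\tau$ from being cohomologous to any discrete-valued function for $t\ne 0$.
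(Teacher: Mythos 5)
Your high-level decomposition (Euler product for $\Re(s)>\delta$; use \eqref{zf} for $|\Im(s)|>1$; spectral perturbation of $\mathcal{L}_s$ for $|\Im(s)|\le 1$, with the only remaining issue being $\lambda(\delta+it)\ne 1$ for $0<|t|\le 1$, i.e., the non-lattice property of $\tau$) is reasonable and the first two regimes are handled correctly. The gap is in the last step, and it is a genuine one: you claim that the NLI property of Theorem \ref{NLIholds} (resp.\ Proposition \ref{realNLI}) directly rules out a cohomological identity $t\tau=\phi\circ f-\phi\pmod{2\pi\mathbb{Z}}$ on $J$. That implication does not follow from NLI alone. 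Lemma \ref{NLI2} says that $(\tilde\tau,\tilde\theta)$ has non-degenerate derivative on an open disc $U_0$ in $\mathbb{C}$, but the lattice condition is an identity on the Julia set $J$, which may be a Cantor set of zero area: a locally constant, discrete-valued $L$ on $J$ can be wildly non-constant, and a non-zero gradient of $\hat\tau$ on $U_0$ does not preclude $\tilde\tau$ from taking discrete (or translated lattice) values on the fractal $J\cap U_0$. In other words, NLI is an input to the Dolgopyat machinery, not a substitute for it; to turn a non-degeneracy statement on the ambient neighbourhood into information on $J$ one needs the non-concentration property and the full cancellation argument, which is precisely what \eqref{zf} packages.

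The paper avoids this by running the implication in the opposite direction: it takes \eqref{zf} as the primary input and deduces the non-lattice property from it. If $\tau=L+u-u\circ f$ with $L$ valued in $m\mathbb{Z}$, then $\tilde\zeta$ would have poles at $\delta+2\pi i p/m$ for all $p\in\mathbb{Z}$, but \eqref{zf} shows $\log\tilde\zeta$ is holomorphic with at most polynomial growth on the strip $\{|\Re(s)-\delta|<\e_0,\ |\Im(s)|>1\}$, so no such poles can exist. Once non-lattice is established, the analytic and non-vanishing behaviour on all of $\Re(s)\ge\delta$ (including $|\Im(s)|\le 1$), save the simple pole at $s=\delta$, follows from Parry--Pollicott \cite{PP}, which subsumes the spectral-perturbation computation you outline. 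You should replace your NLI argument for the non-lattice step with this pole-contradiction argument; the rest of your structure can then be collapsed into the single appeal to \cite{PP}.
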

\begin{proof} The estimates \eqref{zf}, together with the RPF theorem, implies that
the non-lattice property holds for $\tau$, in the sense that  there exist no function $L: J \rightarrow m \mathbb{Z}$ for some $m\in \br$
 and a Lipschitz function $u : J \rightarrow \br$ such that
\be\label{taunon} \tau = L + u - u\circ f .\ee
Indeed, if $\tau$ satisfies \eqref{taunon}, than $\tilde \zeta$ has  poles at $ \delta + 2p\pi i/m$ for all $p\in \z$,
which contradicts \eqref{zf} (see \cite{Na} for details).
The claim then follows from \cite{PP}.
\end{proof}

%By Corollary \ref{taunonlattice},
%we could apply \cite[Theorem 6.3]{PP} to deduce that
 %$\tilde \zeta(s)$ is analytic and non-vanishing on $\Re(s)\ge \delta$ except for the simple pole at $s=\delta$. 
 In view of this corollary,  by making $\e_0$ smaller if necessary, we deduce from \eqref{zf}:
%has a non-vanishing analytic extension to $\Re(s)\ge \delta$, except for a simple pole at $s=\delta$ (Theorem 6.3, PP),
%we have:
\begin{prop}\label{sum2}
\begin{enumerate}
\item For $\ell=0$, $\tilde \zeta(s)=\tilde \zeta(s,0)$ is analytic and non-vanishing on $\Re(s)\ge \delta -\e_0$ except for the simple pole at $s=\delta$.

\item For $\ell \ne 0$, $\tilde \zeta(s, \ell )$ is analytic and non-vanishing on $\Re(s)\ge \delta -\e_0$.
%\item  On $|\Im (s)|+|\ell| >b_0$ and $|\Re(s)-\delta |<\e_0$, we have
%$$ \exp\left( -C_\e'   (| \Im(s)| + |\ell |)^{2+\e} )\right)
 %\le | L(s, \chi_\ell)|\le\exp\left( C_\e'   (| \Im(s)| + |\ell |)^{2+\e} )\right) .$$
\item For any $\e>0$, there exists $C_\e>1$ such that  for any $\ell\in \z$,
we have  $$\log |\tilde \zeta(s, \ell)| \le C_\e   \cdot (|\ell| +1)^{2+\e} \cdot  |\Im (s)|^{2+\e}$$
for all $\Re(s)>\delta-\e_0$ and $|\Im (s)|\ge 1$.
\end{enumerate}
\end{prop}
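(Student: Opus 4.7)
The plan is to deduce all three parts of the proposition directly from the bound \eqref{zf} and Corollary \ref{taunonlattice}, using only elementary complex analysis beyond what has been set up in the excerpt.

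Part (3) should fall out immediately. For $|\Im(s)|\ge 1$ we have the elementary inequality $|\Im(s)|+|\ell|\le |\Im(s)|(1+|\ell|)$, and raising both sides to the power $2+\e$ converts the already established bound \eqref{zf} into the required estimate $|\log\tilde\zeta(s,\ell)|\le C_\e(|\ell|+1)^{2+\e}|\Im(s)|^{2+\e}$.

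For parts (1) and (2), my strategy is to exploit the general principle that a locally uniform bound on $\log\tilde\zeta(s,\ell)$ forces $\tilde\zeta(s,\ell)$ itself to be analytic and non-vanishing, since $\tilde\zeta=\exp(\log\tilde\zeta)$. On the half-plane $\Re(s)>\delta$ the series $\log\tilde\zeta(s,\ell)=\sum_{n\ge 1}\tilde Z_n(s,\ell)/n$ is absolutely convergent and analytic by the usual Euler product argument. The spectral decay in Theorem \ref{trans}, fed into Proposition \ref{zn}, analytically continues this series to the larger region $\{\Re(s)>\delta-\e_0,\;|\Im(s)|+|\ell|>1\}$ while giving the uniform bound \eqref{zf}. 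Exponentiating then yields that $\tilde\zeta(s,\ell)$ is analytic and non-vanishing throughout this region, which already accounts for most of the claim in (1) and (2).

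It remains to cover the region excluded by the strict inequality $|\Im(s)|+|\ell|>1$. For part (1), with $\ell=0$, the missing portion is the strip $|\Im(s)|\le 1$. Here I would appeal to Corollary \ref{taunonlattice}, which says that $(s-\delta)\tilde\zeta(s,0)$ is analytic and non-vanishing on the compact segment $\{\Re(s)=\delta,\,|\Im(s)|\le 1\}$. By continuity of analytic functions, this property persists on an open neighborhood of the segment, and after shrinking $\e_0$ so that $\{\Re(s)\ge\delta-\e_0,\,|\Im(s)|\le 1\}$ sits inside that neighborhood, we obtain (1). For part (2), with $\ell\ne 0$ so that $|\ell|\ge 1$, the only points not captured above are those with $|\ell|=1$ and $\Im(s)=0$. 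At such real points I would invoke Theorem \ref{decayestimate}(2), whose spectral bound is valid already on the closed locus $|\Im(s)|+|\ell|\ge 1$; this forces the spectral radius of $\L_{s,\pm 1}$ to be strictly less than one for $\Re(s)>\delta-\e_0$, ruling out any pole of $\tilde\zeta(s,\pm 1)$, and non-vanishing then follows by passing to the limit from the punctured neighborhood where \eqref{zf} applies.

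The main obstacle I anticipate is the bookkeeping for part (1) in stitching together the two regimes: the spectral continuation from \eqref{zf} is only available for $|\Im(s)|\ge 1$ and cannot be taken uniformly as $|\Im(s)|\to 0$, so it must be patched with the classical Ruelle-Perron-Frobenius extension near $s=\delta$ supplied by Corollary \ref{taunonlattice}, with a careful choice of a possibly smaller $\e_0$ that works on both pieces simultaneously. Once this is arranged, the deduction is a routine combination of the estimate \eqref{zf} with the earlier results.
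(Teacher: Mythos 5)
Your proposal is essentially the route the paper takes, which is terse on this point (the paper simply writes ``In view of this corollary, by making $\e_0$ smaller if necessary, we deduce from \eqref{zf}''); you have filled in the details in a reasonable way. Part (3) is entirely correct: for $|\Im(s)|\ge 1$ one has $|\Im(s)|+|\ell|\le |\Im(s)|(1+|\ell|)$, so raising to the $2+\e$ and inserting into \eqref{zf} gives the displayed bound. For parts (1) and (2), your strategy of exponentiating the bounded analytic function $\log\tilde\zeta$ on the region where \eqref{zf} applies, then patching in Corollary \ref{taunonlattice} (i.e.\ the Parry--Pollicott/RPF continuation) near $s=\delta$ when $\ell=0$ and shrinking $\e_0$, is precisely what is intended.

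Two minor points worth tightening. First, for (1) you invoke ``continuity of analytic functions'' to push the non-vanishing of $(s-\delta)\tilde\zeta(s,0)$ from the compact segment $\{\Re(s)=\delta,\,|\Im(s)|\le 1\}$ into an open neighbourhood; this requires knowing $\tilde\zeta(s,0)$ is already meromorphic in an open neighbourhood of that segment, which is supplied by the Parry--Pollicott theory underlying Corollary \ref{taunonlattice}, so you should cite that explicitly rather than just the corollary. Second, for (2) at the edge case $|\ell|=1$, $\Im(s)=0$, your phrase ``non-vanishing then follows by passing to the limit from the punctured neighborhood'' is not quite an argument as written: a limit of non-vanishing functions can vanish. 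The clean way is to note that the bound \eqref{zf} gives $|\log\tilde\zeta(s,\pm 1)|\le C_\e\cdot 2^{2+\e}$ uniformly as $\Im(s)\to 0$ with $\Re(s)>\delta-\e_0$, and a zero of $\tilde\zeta(\cdot,\pm 1)$ on the real axis would force $\log\tilde\zeta(s,\pm 1)\to -\infty$ along nearby off-axis approaches, contradicting this uniform bound. With that adjustment, the argument is complete and agrees with the paper's intent.
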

%Of course, this is not quite what we want: since the mapping $\pi : \Sigma^+ \rightarrow J$ is not one to one but only finite to one, it does not
% immediately provide information about periodic points of $f$. The difficulty can be overcome, 
Due to a result of Manning \cite{Ma},  these analytic properties of $\tilde \zeta(s,\ell)$ can be transferred to those of $\zeta(s,\ell)$:
$$\zeta(s, \ell)=\exp \left(\sum_{n=1}^{\infty}\frac{1}{n} \sum_{f^n x=x} \chi_\ell( \alpha_n( x) )\cdot  e^{-s\tau_n(x)} \right). $$
%and $\zeta(s) = \zeta(s, \chi_0)$. 
\begin{thm}[Manning \cite{Ma}] There exists $\epsilon_1> 0$ such that for any $\ell\in \z$, the ratio
$$ \frac{ \tilde \zeta(s, \ell)}{\zeta(s, \ell)}$$
is holomorphic, bounded, and non-vanishing on $\Re(s) > \delta - \epsilon_1$.\end{thm}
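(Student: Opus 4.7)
The goal is to show the symbolic and dynamical zeta functions differ by a factor that extends holomorphically and without zeros across the line $\Re(s)=\delta$. The plan is to follow Manning's classical strategy, which tracks the overcounting caused by the semiconjugacy $\pi : \Sigma^+ \to J$ failing to be injective exactly at preimages of the boundary of the Markov partition.

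First I would make the overcounting explicit. Since $\pi$ is bounded-to-one and intertwines $\sigma$ with $f$, a periodic point $y \in J$ of $f^n$ may have several preimages $x \in \Sigma^+$ with $\sigma^n x = x$, but this only happens when some iterate of $y$ lies on the boundary $\partial P := \bigcup_j \partial_J P_j$ relative to $J$. The set $B := \bigcup_{k\ge 0} f^{-k}(\partial P)$ is forward invariant and closed, and the excess $\tilde Z_n(s,\ell) - Z_n(s,\ell)$ is a sum, weighted by the multiplicities $|\pi^{-1}(y)|$, over periodic orbits in $B$. Because the boundary decomposes into strata $B_1 \supset B_2 \supset \cdots$ where $B_r$ consists of points belonging to $r$ partition elements simultaneously, a standard inclusion-exclusion identity expresses
\[ \log \tilde \zeta(s,\ell) - \log \zeta(s,\ell) = \sum_r (-1)^{?} \log \zeta_{B_r}(s,\ell) \]
where each $\zeta_{B_r}(s,\ell)$ is the dynamical zeta function of $f|_{B_r}$ twisted by $\chi_\ell$, which itself admits a symbolic description via a subshift of finite type on fewer symbols (the symbols being the $r$-tuples of Markov indices that can coexist at a boundary point).

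Next I would estimate the pressure of each boundary subsystem. The restriction of $f$ to $B_r$ for $r\ge 2$ is a hyperbolic expanding system carried by a strictly lower dimensional invariant set (the intersection locus of partition elements), and in particular its topological pressure satisfies $P_{B_r}(-\delta \tau) < 0$. This is the essential dynamical input: since $B_r$ is a proper, $f$-invariant, compact subset of the full mixing system $(J,f)$ with smaller Hausdorff dimension $\delta_r < \delta$, the variational principle gives strict inequality of pressures. By continuity of pressure in the potential, there exists $\epsilon_1 > 0$, which may be chosen uniform in $r$ since only finitely many strata are nonempty, such that $P_{B_r}(-s\tau) < 0$ for all $\Re(s) > \delta - \epsilon_1$ and all $r\ge 2$. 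Standard transfer operator theory (Ruelle-Perron-Frobenius applied to each subsystem) then shows that each $\zeta_{B_r}(s,\ell)$ is holomorphic and non-vanishing on this half-plane, with uniform bounds independent of $\ell$ coming from the fact that the twisted transfer operator is dominated in norm by the untwisted one.

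The main obstacle will be setting up the inclusion-exclusion cleanly so that the multiplicity weights in $\tilde Z_n - Z_n$ really do assemble into a product of zeta functions of subsystems. The combinatorics of Markov partition boundaries (coincidences of 2, 3, up to $k_0$ partition elements at a point) needs to be organized precisely, and one must verify that each stratum $B_r$ really is modeled by a subshift of finite type on the appropriately reduced alphabet so that Ruelle's machinery applies verbatim. Once that is in place, the boundedness and non-vanishing of $\tilde\zeta/\zeta$ on $\Re(s) > \delta - \epsilon_1$ follow immediately from the convergent product expansion, and the same $\epsilon_1$ works uniformly in $\ell \in \z$ because the bounds obtained depend only on $|\Re(s)-\delta|$ and the pressures of the $B_r$.
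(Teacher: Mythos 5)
The paper itself offers no proof of this statement — it is cited directly to Manning's 1971 paper on rational zeta functions for Axiom A diffeomorphisms — so there is no internal argument against which to check you. Your reconstruction does follow the classical Manning strategy: the semiconjugacy $\pi : \Sigma^+ \to J$ fails to be injective exactly over boundary points of the Markov partition, one introduces auxiliary subshifts encoding the strata where several rectangles meet, and an inclusion--exclusion identity expresses $\tilde\zeta/\zeta$ as a finite product of zeta functions of these boundary subsystems raised to powers $\pm 1$. Each such factor has an Euler product converging absolutely on $\Re(s)$ strictly larger than a pressure-zero $\delta_r < \delta$, which yields holomorphy, non-vanishing and boundedness on $\Re(s) > \delta - \epsilon_1$ for small $\epsilon_1$; uniformity in $\ell$ is automatic since $|\chi_\ell| = 1$ so the twisted Euler product is dominated term-by-term by the untwisted one. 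This matches the standard account and the outline is sound.

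Two points deserve tightening. First, your justification of the pressure gap runs backwards: you assert the boundary systems have smaller Hausdorff dimension and infer from this a strict pressure inequality ``by the variational principle,'' but the variational principle only gives $P_{B_r}(-\delta\tau) \le P_J(-\delta\tau) = 0$; strictness is the nontrivial content. The correct argument is that the equilibrium state $\nu$ for $-\delta\tau$ on the topologically mixing system $(J,f)$ is a fully supported ergodic Gibbs measure, hence charges no proper closed invariant subset, and by a standard pressure-gap lemma for H\"older potentials on mixing subshifts of finite type (Bowen, or Parry--Pollicott), the supremum in the variational principle restricted to measures carried by $B_r$ is then \emph{strictly} negative; the Hausdorff-dimension inequality is a consequence, not a premise. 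Second, note that the paper's hypotheses on the Markov partition are arranged so that this pressure gap is available: condition (3) in subsection \ref{sub_rat}, that no $P_j$ is contained in $\bigcup_{k\neq j} \overline{U_k}$, is precisely what guarantees that the boundary locus misses an open subset of every rectangle, hence that each boundary subshift is a \emph{proper} subsystem of $\Sigma^+$. Without pointing to such a hypothesis, the claim that the $B_r$ are proper (and hence have a pressure gap) is not automatic for an arbitrary Markov partition, and this is the one place where the paper does contribute a concrete ingredient you should cite.
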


Therefore Proposition \ref{sum2} holds for $\zeta(s, \ell)$ as well as for $\tilde \zeta(s, \ell)$, which \color{black} finishes the proof of 
Theorem \ref{sum}.
%\begin{lem} (Titchmarsh: The theory of functions, Lemma 4 in \cite{PS2})
%If $\xi(s)$ is non-zero and analytic on a disk $D(R): |s-z|\le R$
%\and $\log |\xi(s)|$ is bounded by $U$ on the disk $D(R)$,
%then for any $0<r<R$ and $s\in D(r)$,
%$$\left|\frac{\xi'(s)}{\xi(s)} \right| \le 8R (R-r)^{-2} (U+ |\log \xi(z)|) .$$
%\end{lem}

We can convert this into a bound on the logarithmic derivative of $\zeta(s, \ell)$. By an extra application of  Phragmen-Lindenlof theorem 
(as in \cite{PS1}), for any $\e>0$, there exist  $C_\e>1$, $0<\e_1<\e_0$, and $0<\beta<1$ such that
for all $\Re(s)>\delta-\e_1$ and $|\Im (s)|\ge 1$,
\be\label{ce} \left|\frac{\zeta(s,\ell)' }{\zeta(s,\ell)} \right| \ \le C_\e \cdot ( |\ell | +1 )^{2+\e} \cdot  |\Im (s)|^{\beta}.\ee
Define the counting function:
$$\pi_\ell (t):= \sum_{ \hat x \in \P_t}\chi_\ell (\lambda_\theta(\hx)) $$
where $\P_t$ is the set of all primitive periodic orbits of $f$ with $e^{|\lambda(\hx)|}<t$ in the Julia set $J$.

Now using Proposition \ref{sum2} and \eqref{ce}, the arguments of \cite{PS1} deduce  the following from \eqref{ce}:
\begin{prop}\label{cou2}  There exists $\eta>0$ such that \begin{enumerate} 
\item $$\pi_0(t)=\op {Li} (e^{\delta  t} ) + O(e^{(\delta-\eta)t} );$$
\item for any $\e>0$ and any $\ell \ne 0$, we have
$$\pi_\ell (t) = O((|\ell | +1)^{2+\e} e^{(\delta-\eta)t})$$
where the implied constant  depends only on $\e$. \end{enumerate}
\end{prop}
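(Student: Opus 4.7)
The plan is to deduce Proposition \ref{cou2} from the analytic properties of $\zeta(s,\ell)$ in Proposition \ref{sum2} together with the logarithmic-derivative bound \eqref{ce}, via a truncated Perron formula and a contour shift --- the standard route for proving the prime number theorem with power savings, following \cite{PS1}.

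First, I would introduce the weighted Chebyshev-type function
$$\psi_\ell(t) := \sum_{\substack{\hat x\in \mathcal P,\, k\ge 1\\ k\log|\lambda(\hat x)|\le t}}\chi_\ell(\lambda_\theta(\hat x))^k\log|\lambda(\hat x)|,$$
which satisfies the Mellin--Stieltjes identity
$$-\frac{\zeta'(s,\ell)}{\zeta(s,\ell)} \;=\; \int_0^\infty e^{-st}\,d\psi_\ell(t)\qquad \text{for }\Re(s)>\delta.$$
A truncated Perron formula with abscissa $c=\delta+1/t$ and truncation height $T>0$ then gives
$$\psi_\ell(t)=\frac{1}{2\pi i}\int_{c-iT}^{c+iT}\!\left(-\frac{\zeta'(s,\ell)}{\zeta(s,\ell)}\right)\frac{e^{st}}{s}\,ds+O\!\left(\frac{(|\ell|+1)^{2+\epsilon}\,e^{ct}}{T}\right),$$
where the error comes from the standard tail/truncation estimate combined with \eqref{ce}.

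Next, I would shift the contour from $\Re(s)=c$ to $\Re(s)=\delta-\eta'$ for a small $\eta'<\epsilon_1$. By Proposition \ref{sum2}, the only possible singularity encountered is the simple pole at $s=\delta$ which occurs exactly when $\ell=0$ and contributes the residue $e^{\delta t}$; for $\ell\ne 0$ no main term arises. The shifted vertical segment and the two horizontal connector segments are estimated using \eqref{ce}, yielding
$$\left|\int_{\delta-\eta'-iT}^{\delta-\eta'+iT}\!\left(-\frac{\zeta'(s,\ell)}{\zeta(s,\ell)}\right)\frac{e^{st}}{s}\,ds\right|\ll_\epsilon (|\ell|+1)^{2+\epsilon}\,T^{\beta}\,e^{(\delta-\eta')t}.$$
Optimizing $T$ as a small positive power of $e^t$ balances the truncation error against this shifted-contour estimate and produces
$$\psi_\ell(t)=\mathbf 1_{\ell=0}\cdot e^{\delta t}+O\!\left((|\ell|+1)^{2+\epsilon}e^{(\delta-\eta)t}\right)$$
for a suitable $\eta>0$.

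Finally I would pass from $\psi_\ell$ to $\pi_\ell$ via Abel summation. The contribution to $\psi_\ell$ coming from higher powers $k\ge 2$ is dominated in modulus by the corresponding trivial-character sum, which a direct Euler-product comparison bounds by $O(e^{\delta t/2})$; this is absorbed into the error. Partial summation of the remaining $k=1$ part against $1/\log|\lambda(\hat x)|$ then converts $e^{\delta t}$ to $\operatorname{Li}(e^{\delta t})$ and preserves the error bound, giving both parts of the proposition. The one genuinely delicate point, and the step that I would expect to demand the most care, is the truncated Perron balancing: the height $T=T(t,\ell)$ must simultaneously absorb the truncation error, the horizontal-segment contributions, and the $(|\ell|+1)^{2+\epsilon}$ factor coming from \eqref{ce}. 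Because \eqref{ce} is polynomial \emph{separately} in $|\ell|$ and in $|\Im(s)|$, this optimization is routine and produces a uniform power saving with polynomial dependence on $|\ell|$, exactly as claimed.
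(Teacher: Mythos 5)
Your proposal is correct and follows essentially the same approach as the paper, which defers this step to the standard Perron-formula-plus-contour-shift argument of Pollicott--Sharp \cite{PS1}: Chebyshev function, Mellin--Stieltjes identity for $-\zeta'/\zeta$, truncated Perron, contour shift into the zero-free strip given by Proposition \ref{sum2} with \eqref{ce} controlling the shifted and horizontal segments, and Abel summation. One small slip: the residue of $-\tfrac{\zeta'(s,0)}{\zeta(s,0)}\cdot\tfrac{e^{st}}{s}$ at $s=\delta$ is $e^{\delta t}/\delta$ rather than $e^{\delta t}$, but the factor $\delta^{-1}$ is precisely what is needed so that the subsequent partial summation against $1/\log|\lambda(\hat x)|$ produces $\operatorname{Li}(e^{\delta t})$ and not $\delta\operatorname{Li}(e^{\delta t})$.
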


Now let $\psi\in C^4(S^1)$. Then we can write $\psi=\sum a_\ell \chi_\ell$ by the Fourier expansion where $a_0=\int \psi dm$, and
$a_\ell =O(|\ell|^{-4})$.  Then we deduce from Proposition \ref{cou2} that
\be \label{fF} \sum_{ \hat x \in \P_t} \psi (\lambda_\theta(\hx)) = \int \psi dm \cdot  \op {Li} (e^{\delta  t} ) + O(e^{(\delta-\eta)t} ).
\ee
Recalling that there are at most $2d-2$ periodic orbits of $f$ which do not lie on $J$,
Proposition \ref{cou2} and \eqref{fF} provide Theorem \ref{main1}.
\medskip

\noindent{\bf Remark on equidistribution}
We remark that the arguments of Bowen \cite{Bo}, Parry \cite{Pa}, and Parry-Pollicott \cite{PP}  can be used to obtain the main terms of our result in slightly stronger form. Their results imply that 
the collection of pairs $\lbrace (\hat x, \lambda_\theta(\hat x)) : \hat x \mbox{ is a primitive periodic orbit with $|\lambda(\hat x)| < T$} \rbrace$ equidistributes in $J \times S^1$ with respect to the product measure $\nu \times m$. It is likely that a combination of our approach with their techniques would provide an effective joint equidistribution theorem, but we shall not address that here. 

%let $\phi$ be a $C^1$-function 
%defined on a neighborhood of $J$ and $\psi\in C^1(S^1)$ (we only consider a constant function for $\psi$ if.
 %the Julia set of $f$ is contained in a circle). Then 
%\be\label{eq1}
%\sum_{\hat x\in \mathcal P_t} \psi (\lambda_\theta(\hx)) \left(\sum_{x\in \hat x} \phi(x)\right) \sim \frac{e^{\delta t}}{\delta} \nu(\phi)\int\psi dm  \ee%
%and
%\be \label{eq2} \sum_{\hat x\in \mathcal P_t} \psi (\lambda_\theta(\hx)) \left(\frac{1}{\text{Period of $\hat x$}}
%\sum_{x\in \hat x} \phi(x)\right) \sim \frac{e^{\delta t}}{\delta t} \nu(\phi)\int\psi dm  \ee
%where $\nu=\nu_{-\delta \tau}$ is the equilibrium state of the potential $-\delta \tau$.

%For  a non-negative smooth function $\phi$,  consider
%$$\zeta(s, \ell, z\phi):=\exp \left(\sum_{n=1}^{\infty}\frac{1}{n} \sum_{f^n x=x} \chi_\ell( \alpha_n( x) )\cdot  e^{-s\tau_n(x)+z \phi_n(x)} \right)$$
%for $\Re(s)\ge \delta$ and $|z|$ small.
%We expect that the approach of this paper can be used to show the existence of  some $\e>0$ such that
%$\zeta(s, \ell, z\phi)$ is analytic and non-vanishing on $\Re (s)>\delta -\e$ and $|z|<\e(s)$ except
%for the simple pole when $s=\delta$ and $z=0$, and that
%(6.4) holds for power saving error term and (6.5) is given by  $\op{Li}(e^{\delta t}) \nu(\phi)\int\psi dm +O(e^{(\delta-\eta)t})$ for some
%$\eta>0$, where the implied constants depend only on Sobolev norms of $\psi$ and $\phi$.
\color{black}
\bigskip

\end{document}